\documentclass[11pt,leqno]{amsart}
\usepackage{amssymb,amsfonts,amsmath,amsopn,amstext,amscd,extarrows,latexsym,xy,hyperref,mathrsfs,verbatim, yhmath}
\usepackage{epsfig}
\input xy
\xyoption{all}

\allowdisplaybreaks[1]

\parindent0.5cm
\parskip0.15cm
\topmargin-1cm \footskip2cm \textwidth16cm \textheight22cm
\evensidemargin0cm \oddsidemargin0cm

\pagestyle{myheadings}

\theoremstyle{remark}
\newtheorem{para}{\bf}[section]

\newtheorem{rmk}[para]{\bf Remark}

\theoremstyle{definition}
\newtheorem{exam}[para]{\bf Example}

\newtheorem{dfn}[para]{\bf Definition}

\theoremstyle{plain}
\newtheorem{thm}[para]{\bf Theorem}
\newtheorem{lemma}[para]{\bf Lemma}
\newtheorem{sublemma}[para]{\bf Sublemma}
\newtheorem{cor}[para]{\bf Corollary}
\newtheorem{prop}[para]{\bf Proposition}

\newenvironment{numequation}
{\addtocounter{enumi}{1}\begin{equation}}{\end{equation}}

\newcommand{\cC}{{\mathcal C}}

\newcommand{\cE}{{\mathcal E}}
\newcommand{\cF}{{\mathcal F}}

\newcommand{\cH}{{\mathcal H}}
\newcommand{\cI}{{\mathcal I}}
\newcommand{\cJ}{{\mathcal J}}

\newcommand{\cM}{{\mathcal M}}

\newcommand{\cO}{{\mathcal O}}

\newcommand{\cW}{{\mathcal W}}

\newcommand{\bB}{{\bf B}}

\newcommand{\bG}{{\bf G}}

\newcommand{\bL}{{\bf L}}
\newcommand{\bM}{{\bf M}}
\newcommand{\bN}{{\bf N}}

\newcommand{\bP}{{\bf P}}

\newcommand{\bT}{{\bf T}}
\newcommand{\bU}{{\bf U}}

\newcommand{\bbC}{{\mathbb C}}

\newcommand{\bbN}{{\mathbb N}}

\newcommand{\bbP}{{\mathbb P}}
\newcommand{\bbQ}{{\mathbb Q}}
\newcommand{\bbR}{{\mathbb R}}

\newcommand{\bbZ}{{\mathbb Z}}

\newcommand{\fra}{{\mathfrak a}}
\newcommand{\frb}{{\mathfrak b}}

\newcommand{\frd}{{\mathfrak d}}

\newcommand{\frg}{{\mathfrak g}}

\newcommand{\frl}{{\mathfrak l}}
\renewcommand{\frm}{{\mathfrak m}}
\newcommand{\frn}{{\mathfrak n}}

\newcommand{\frp}{{\mathfrak p}}
\newcommand{\frq}{{\mathfrak q}}

\newcommand{\frt}{{\mathfrak t}}
\newcommand{\fru}{{\mathfrak u}}

\newcommand{\frx}{{\mathfrak x}}

\newcommand{\GL}{{\rm GL}}
\newcommand{\Hom}{{\rm Hom}}

\newcommand{\Ad}{{\rm Ad}}

\newcommand{\diag}{{\rm diag}}
\newcommand{\hra}{\hookrightarrow}

\newcommand{\im}{{\rm im}}
\newcommand{\ind}{{\rm ind}}
\newcommand{\Ind}{{\rm Ind}}
\newcommand{\Lie}{{\rm Lie}}
\newcommand{\lra}{\longrightarrow}
\newcommand{\midc}{\;|\;}

\newcommand{\Qp}{{\bbQ}_p}
\newcommand{\ra}{\rightarrow}
\newcommand{\Rep}{{\rm Rep}}

\newcommand{\sub}{\subset}
\newcommand{\supp}{{\rm supp}}
\newcommand{\alg}{{\rm alg}}

\newcommand\Zp{{{\bbZ}_p}}

\newcommand{\Pf}{{\it Proof. }}
\renewcommand{\qed}{{\hfill{\space} $\Box$}}








\newcommand{\B}{\mathcal{B}}

\newcommand{\D}{\mathcal{D}}

\renewcommand{\O}{\mathcal{O}}

\newcommand{\h}[1]{\widehat{#1}}

\begin{document}
	
	\title[On some non-principal locally  analytic representations]{On some non-principal locally  analytic representations induced by cuspidal Lie algebra representations}
	\author[Sascha Orlik]{Sascha Orlik, with an appendix by Andreas Bode}
	\address{Fakultät 4 - Mathematik und Naturwissenschaften, Bergische Universit\"at Wuppertal,
		Gau{\ss}stra\ss{}e 20, D-42119 Wuppertal, Germany}
	\email{orlik@uni-wuppertal.de}
	
	\email{abode@uni-wuppertal.de}
	
	\maketitle
	
	\normalsize

	\begin{abstract}
		Let $G$ be a split reductive $p$-adic Lie group. This paper is the first in a series on the construction of locally analytic $G$-representations which do not lie in the principal series.
		Here we consider the case of the general linear group $G=\GL_{n+1}$ and locally analytic representations which are induced by cuspidal modules of the Lie algebra.
		We prove that they are ind-admissible  and  satisfy the homological vanishing criterion in the definition of supercuspidality in the sense of Kohlhaase. 
	\end{abstract}
	
	\tableofcontents
	\section{Introduction}

	
	Let $L$ be a finite extension of $\Qp$ with ring of integers $O_L$. Let $G = \bG(L)$ be the group of $L$-valued points of a split connected reductive algebraic group $\bG$ over $O_L.$ This paper considers some locally analytic $G$-representations which do not lie in the principal series, i.e., which are not closed subrepresentation of parabolically induced representations $\Ind^G_P(W)$ from locally algebraic representations $W$ as in \cite{OS2}.
	We prove  that they are ind-admissible (cf. Definition \ref{dfn_qa}) and satisfy the  homological vanishing criterion in the definition of supercuspidality in the sense of Kohlhaase \cite{K2}. Similarly  to the principal series case \cite{OS2}, the construction proceeds by globalizing certain  representations of the attached Lie algebra. Indeed our representations are induced  by certain cuspidal weight modules which are by the very definition disjoint from the category $\cO$ used in loc.cit..
	
	The motivation for the construction is twofold. The first is intrinsic, since in every representation theory of a given group where a kind of "parabolic" induction exists it is natural to determine the cuspidal representations. 
	Apart from this aspect the second reason is given by  the hypothetical $p$-adic local Langlands correspondence. In fact
	one might wonder whether the locally analytic representations considered here have a meaning, e.g. on the Galois side.

	The theory of locally analytic representations was introduced by P. Schneider and J. Teitelbaum in \cite{ST1}. Such representations appear as the locally analytic vectors in Banach space representations and in the study of vector bundles on  $p$-adic period domains \cite{ST3, O, Sp} inclusive their coverings \cite{DLB,CDN,Van}.   So far, mainly  irreducible parabolically induced locally-analytic representations are considered. To the author's knowledge the aspect of cuspidality 
	was first considered in \cite{KS} by Kisin and Strauch. A few years later Kohlhaase \cite{K2}
	suggested a definition of a supercuspidal representation $V$ by demanding that $V$ is topologically irreducible and that all cohomology groups $H_i(N,V)$ for the  unipotent radicals $N$ of all parabolic subgroups in $G$  vanish. Moreover, he gave an important example of a locally algebraic supercuspidal representation. Those
	appear already in the $p$-adic cohomology of the Drinfeld tower (in the case of $G=\GL_2$) \cite{Van,DLB,CDN} studied by Dospinescu, Le-Bras and Colmez, Dospinescu, Nizio\l{} and Vanhaecke, respectively.

	The idea for the construction is similar to the principal series case. Let $K$ be a finite extension of $L$ which serves as our coefficient field. We start with certain Lie algebra representations of $\frg=\Lie \;G$, i.e., with modules $M$ of the universal enveloping algebra $U(\frg)$ and globalize them to locally analytic representations. More precisely,  we consider for $G=\GL_{n+1}$ and a fixed torus $T$  the category of weight modules $\cW$
	and its subcategory $\cC$  of cuspidal  weight modules $M$ in the sense of Grantcharov and Serganova \cite{GS}.  If these cuspidal modules are simple they coincide with those of Fernando \cite{Fe}. Each cuspidal module has a degree which is simply the common dimension of its non-trivial weight spaces.  For each $\mu \in K^{n+1}$ such that $\mu_i\not\in \bbZ, i=0,\ldots, n,$ we can define explicitly a simple cuspidal module $M^\mu$ of degree $1.$  
	To every finitely generated weight module $M$ we can attach its set of weights in $X^\ast(T)_K\cong K^{n+1}.$ By considering the norms of the entries in $K^{n+1}$ we get a bounded subset $\mu_M \subset K^{n+1}.$ For $\kappa \in \bbR$, we let $\cW_{\leq \kappa}$ be the full subcategory of $\cW$  bounded by $\kappa.$  
	To the category $\cW_\kappa$ we attach a finite field extension $J=J_\kappa$ of $K$ such that all non-trivial weights of $M$ give rise to locally analytic characters $T_0 \to J^\times$ 
	where $T_0\subset T$ is the maximal compact subgroup.
	Let $D(G_0)$ be the distribution algebra of the compact subgroup $G_0:={\bf G}(O_L).$
	In this way we may consider $M$ as a module over the subalgebra $D(\frg,T_0)$  of $D(G_0)$ generated by $\frg$ and $T_0$.
	Further we consider smooth representations $V$ of the maximal compact subgroup $T_0$ of our torus $T$. 
	We set then for $M\in \cW_\kappa$ 
	$$\cF^G_\kappa(M,V):=c-\Ind^G_{G_0} (D(G_0) \otimes_{D(\frg,T_0)} M \otimes V')'$$ where   $c-\Ind^G_{G_0}$ is the compact induction of locally analytic representations. This functor  can be extended via the expression $c-\Ind^G_{G_0} (D(G_0) \otimes_{D(\frg,T_0)} M \otimes Z \otimes V')'$ to consider another ingredient, an algebraic representation $Z$ of the standard parabolic subgroup $P=P_{(1,n)}$ corresponding to the decomposition $(1,n)$ of $n+1.$ Here we refer to chapters 3  and 4 for more details of the construction. Thus we have a functor
	$$\cF^G_\kappa: \cW_\kappa \times \Rep^{alg}_K(P_{(1,n)}) \times  \Rep^\infty_K(T_0) \to \Rep^{la}_J(G)$$
	into the category of locally analytic $G$-representations.
	It turns out that the objects in the essential image of this functor are not admissible in the sense of Schneider and Teitelbaum \cite{ST2}. In fact, they are larger but we will show that they are ind-admissible (see Definition \ref{dfn_qa}, i.e., they are strict inductive limits of admissible representations when restricted to a compact open subgroup).
	
	On of main results of this paper is the following theorem:
	
	\vskip10pt
	
	\noindent {\bf Theorem A.} {\it (i) $\cF^G_\kappa$ is functorial in all arguments: contravariant in $M$ and $Z$, covariant in $V$.
		
		\vskip8pt
		\noindent (ii) $\cF^G_\kappa$ is exact in all arguments.
		
		\vskip8pt
		\noindent  (iii) For all $M\in \cW_\kappa$, for all algebraic representations $Z$ of $P_{(1,n)}$ and for all smooth admissible representations of  $T_0$, the locally analytic representation $\cF^G_\kappa(M,Z,V)$ is ind-admissible and of compact type.
		
		\vskip8pt
		\noindent (iv) Let $(M,Z,V)$ be as in (iii) and suppose additionally that $M$ is  cuspidal. Then for all parabolic subgroups $P\subset G$ with unipotent radical $N$ we have $H_i(N,\cF^G_\kappa(M,Z,V))=0$ for all $i\geq 0.$}  
	
	Concerning topologically irreducibility statements of the representations above, we have to take care on the action of the centre. For this reason we  define the following variant in the case where $M,Z$ and $V$ are irreducible and $M=M^\mu$ is a cuspidal module of degree $1$ corresponding to weight $\mu\in K^{n+1}.$  Then the action of $Z(G)\cap T_0$ on $(M^\mu \otimes Z\otimes_K V')'$ is given by a locally analytic character $\eta_{\mu,Z,V}$. We fix a locally character $\eta$ on $Z(G)$ which extends $\eta_{\mu,Z,V}$ and consider the attached one dimensional dual space $J_\eta'$. Then we set
	$$\overline{X}_\mu(Z,V):=D(Z(G)G_0) \otimes_{D(\frg,T_0)}  (M^\mu \otimes Z\otimes_K V')\otimes J_\eta'.$$
	and
	$$\overline{\cF}_\mu(Z,V):={\rm c-\Ind}^G_{Z(G)G_0}(\overline{X}_\mu(Z,V)').$$
	Then $\overline{X}_\mu(Z,V)$ is a finitely generated  $D(Z(G)G_0)$-module and its topological dual $\overline{X}_\mu(Z,V)'$ is a locally analytic $G_0$-representation. 
	
	The other main theorem  of this paper is:
	
	\noindent {\bf Theorem B.} {\it  Let $n=1$. Suppose that $w(\mu)-\mu \not\in \bbZ^2$ where $w\in W(T,G)$ is the non-trivial reflection in the Weyl group $W(T,G)$. Suppose further that $|\mu_i|\not\in |L|, i=0,1$, $|\mu_0| \neq |\mu_1|$ and that $|\mu_i|\leq 1$ for at least one $i.$    Let $Z$ be given by an algebraic character $\lambda=(\lambda_0,\lambda_1)$ of $T.$ If $\lambda_0+\lambda_1 \neq 0$ we further assume that $|\mu_i|\geq 1$ for at least one $i.$ Let $V$ be irreducible. Then $(\overline{X}_\mu(Z,V)'$ is topologically irreducible. }
	
	\vskip12pt
	
	In particular the representation $\overline{X}_\mu(Z,V)'$ is supercuspidal in the sense of Kohlhaase \cite{K2}. Of course it is tempting to conjecture that some kind of version of Theorem B holds true for general $n$ and where $M$ is a simple cuspidal module and $Z,V$ are irreducible, respectively. Concerning the representation $\overline{\cF}_\mu(Z,V)$ we do not expect to have this property. Indeed, in the appendix it is shown by Bode that there exist non-trivial Hecke operators on $\overline{\cF}_\mu({\bf 1},{\bf 1})$ for $n=2$. We hope that they cut out admissible topologically irreducible $G$-representations.
	
	The proof of the irreducibility result and of Theorem A i), ii) follows the same strategy as in the principal series case \cite{OS2}. As for Theorem B we arrive at a formal power series in one variable and show that it does  not converge by analysing explicitly its coefficients. To generalize this idea to larger $n$ seems to be too complicated to the author at the moment.   As for the proof of iv) we use the fact that the elements in $\frn$ form an Ore set in $U(\frg)$ and in $D(G)$. We prove that for cuspidal $M$ these elements act bijectively on $\cF^G_\kappa(M,Z,V)$, so that we may view $M$ and $\cF^G_\kappa(M,Z,V)$ as modules for the localised algebra $S^{-1}U(\frg)$ where $S=\frn U(\frn)$. Then the statement follows from easy methods of homological algebra.

	As for the list of content we recall in section 2 some basic facts in locally analytic representation theory. In particular we consider the compact induction functor $c-\Ind^G_C$ for a compact open subgroup $C$ of $G.$ Then we define ind-admissible representations and prove some basic properties. Section 3 is devoted to review the necessary background on cuspidal modules over the field of complex numbers. In section 4 we define our functor $\cF^G_\mu$ and prove assertions i)-iii) from the Theorem A above. The irreducibility result is shown in section 5. Finally we prove part iv) in the last section. In the appendix there is a treatment of ind-admissible representations by considering systematically the dual side using the distribution algebra. Finally as mentioned above the reader finds here the construction of some Hecke operators on the above mentioned $G$-representations for $n=2.$  

	{\it Notation and conventions:} We denote by $p$ a prime number and consider fields $L \sub K$ which are both finite extensions of $\Qp$. 
	Let $O_L$ and $O_K$ be the rings of integers of $L$, resp. $K$, and let $|\cdot |_K$ be the absolute value on $K$ such that $|p|_K = p^{-1}$. The field $L$ is our ''base field'', whereas we consider $K$ as our ''coefficient field''. For a locally convex $K$-vector space $V$ we denote by $V'_b$ its strong dual, i.e., the $K$-vector space of continuous linear forms equipped with the strong topology of bounded convergence. Sometimes, in particular when $V$ is finite-dimensional, we simplify notation and write $V'$ instead of $V'_b$. All finite-dimensional $K$-vector spaces are equipped with the unique Hausdorff locally convex topology.
	
	We let $\bG_0$ be a split reductive group scheme over $O_L$ and $\bT_0 \sub \bB_0 \sub \bG_0$ a maximal split torus and a Borel subgroup scheme, respectively. We denote by $\bG$, $\bB$, $\bT$ the base change of $\bG_0$, $\bB_0$ and $\bT_0$ to $L$. By $G_0 = \bG_0(O_L)$, $B_0 = \bB_0(O_L)$, etc., and $G = \bG(L)$, $B = \bB(L)$, etc., we denote the corresponding groups of $O_L$-valued points and $L$-valued points, respectively. Standard parabolic subgroups of $\bG$ (resp. $G$) are those which contain $\bB$ (resp. $B$). For each standard parabolic subgroup $\bP$ (or $P$) we let $\bL_\bP$ (or $L_P$) be the unique Levi subgroup which contains $\bT$ (resp. $T$) and ${\bf U_P}$ its unipotent radical. The opposite unipotent radical is denoted by ${\bf U^-_P}.$  Finally, Gothic letters $\frg$, $\frp$, etc., will denote the Lie algebras of $\bG$, $\bP$, etc.: $\frg = \Lie(\bG)$, $\frt = \Lie(\bT)$, $\frb = \Lie(\bB)$, $\frp = \Lie(\bP)$, $\frl_P = \Lie(\bL_\bP)$, etc.. Base change to $K$ is usually denoted by the subscript ${}_K$, for instance, $\frg_K = \frg \otimes_L K$.
	
	\vskip8pt
	
	We make the general convention that we denote by $U(\frg)$, $U(\frp)$, etc., the corresponding enveloping algebras, {\it after base change to $K$}, i.e., what would be usually denoted by $U(\frg) \otimes_L K$, $U(\frp) \otimes_L K$ etc.
	Similarly, we use the abbreviations $D(G) = D(G,K), D(P) = D(P,K)$ etc. for the locally $L$-analytic distributions with values in $K.$
	
	\vskip10pt

	{\it Acknowledgements.} 
	I am very grateful to Andreas Bode for his careful reading of this paper and for all the discussions on it. In particular for pointing out to me some mistakes in a previous version and the help to remediate them.
	I thank  Georg Linden and Tobias Schmidt for some helpful remarks.
	
	
	\vspace{1cm}
	\section{Preliminaries on locally analytic representations}
	
	\setcounter{enumi}{0}
	We start by recalling some basic facts on locally analytic representations as introduced by Schneider and Teitelbaum \cite{ST1}.
	
	For a locally $L$-analytic group $H$, let $C^{an}(H,K)$ be the locally convex vector space of locally $L$-analytic $K$-valued functions.  The dual space $D(H) = D(H,K) = C^{an}(H,K)'$ is a topological $K$-algebra which has the structure of a Fr\'echet-Stein algebra when $H$ is compact \cite{ST2}.
	More generally, if $V$ is a Hausdorff locally convex $K$-vector space, let $C^{an}(H,V)$ be the $K$-vector space consisting of locally analytic functions with values in $V$. It has the structure of a Hausdorff locally convex vector space, as well.
	
	A locally analytic $H$-representation is a Hausdorff barrelled  locally convex $K$-vector space together with a homomorphism $\rho: H \ra \GL_K(V)$
	such that the action of $H$ on $V$  is continuous and the orbit maps $\rho_v: H \rightarrow V, \; h \mapsto \rho(h)(v)$, are
	elements in $C^{an}(H,V)$ for all $v \in V$. 
	We denote by $\Rep_K^{la}(H)$ the category of locally analytic $H$-representations on $K$-vector spaces where the morphisms are the continuous $H$-linear maps.
	
	We recall that a Hausdorff locally convex $K$-vector space $V$ is called of {\it compact type} if it is an inductive limit of countably many Banach spaces with injective and compact transition maps, cf. \cite[sec. 1]{ST1}. In this case, the strong dual $V'_b$ is a nuclear Fr\'echet space (by \cite[16.10, 19.9]{S1}). We denote by $\Rep_K^{la,c}(H)$ the  full subcategory of $\Rep_K^{la}(H)$ consisting of objects of compact type.  By \cite[3.3]{ST1}, the duality functor gives an equivalence of categories
	
	\begin{equation*}\begin{array}{ccc}\label{equivalence}
			
			\Rep_K^{la,c}(H)
			
			& \stackrel{\sim}{\longrightarrow} &
			
			\left\{\begin{array}{c}
				\mbox{separately continuous $D(H)$-}  \\
				\mbox{modules on nuclear Fr\'echet} \\
				\mbox{spaces with continuous  } \\
				\mbox{$D(H)$-module maps}
			\end{array} \right\}^{op}.
			
		\end{array}
	\end{equation*}
	
	\vskip8pt
	
	\noindent In particular, $V$ is topologically irreducible if $V'_b$ is a topologically simple $D(H,K)$-module. We let $\cC_H$ be the category of coadmissible  $D(H)$-modules, cf. \cite{ST2}. Then by the very definition the above functor induces an equivalence
	$$\Rep_K^{a}(H) \to \cC_H $$
	where $\Rep_K^{a}(H)$ is the subcategory of $\Rep_K^{la,c}(H)$ consisting of admissible $H$-representations.  
	Then   we also recall that a locally analytic $H$-representation $V$ is called {\it strongly admissible} if its strong dual $V'_b$ is a finitely generated $D(H_0)$-module for any (equivalently, one) compact open subgroup $H_0 \sub H$, cf. \cite[sec. 3]{ST1}.
	
	For any closed subgroup $H'$ of $H$ and any locally analytic
	representation $V$ of $H'$, we denote by $\Ind^H_{H'}(V)$ the
	induced locally analytic representation. It is defined by
	$$\Ind^H_{H'}(V) := \Big\{f \in C^{an}(H,V) \midc \forall h' \in H', \forall h \in H: f(h \cdot
	h') = (h')^{-1} \cdot f(h) \; \Big\} \;.$$
	The group $H$ acts on this vector space by $(h \cdot f)(x) = f(h^{-1}x)$. If $V$ is of compact type and $H/H'$ is compact then $\Ind^H_{H'}(V)$ is of compact type again and  $\Ind^H_{H'}(V)' = D(H)\otimes_{D(H')}  V'_b$ is a nuclear Fr\'echet space.
	
	Next we consider the compactly supported induction which is defined similarly as for smooth representations, cf. \cite[Section 2]{ST5}. Suppose that $H$ is second countable and let $C\subset H$ be a compact open subgroup.
	Then we set
	$$c-\Ind^H_C(V):=\{f \in \Ind^H_C(V) \mid f \mbox{ has compact support} \}.$$
	This is an $H$-stable subspace, and for any $f\in c-\Ind^H_C(V)$ there are  only finitely many elements $h_1,\ldots h_r$
	such that $\supp(f) = \bigcup_{i=1}^r h_i C.$
	Since $H$ is second countable we may write $c-\Ind^H_C(V)$ as a countable direct sum $\bigoplus_{g\in H/C} h \cdot V$ and supply this space with the locally convex direct sum topology.
	Then $c-\Ind ^H_{C}(V)$ is barrelled by \cite[Ex. 3 after 6.16]{S1} and Hausdorff by \cite[Cor. 5.4]{S1}. Further the action is locally analytic so that we get a locally analytic $H$-representation. The construction is functorial and we get a functor
	$$c-\Ind^H_C: \Rep_K^{la}(C) \to \Rep_K^{la}(H).$$

	As in the case of smooth representations we have:
	\begin{prop}
		\label{Mackeyadj}
		The functor $c-\Ind^H_C$ is left adjoint to the restriction functor $$\Rep(H)_K^{la} \to \Rep_K^{la}(C),$$ i.e, we have functorial bijections of $K$-vector spaces
		$$\Hom_H(c-\Ind^H_C(V),Z) \cong \Hom_C(V,Z_{|C}).$$ 
	\end{prop}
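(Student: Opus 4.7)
The plan is to set up the unit and counit of the adjunction using the canonical embedding $\iota : V \hookrightarrow c\text{-}\Ind^H_C(V)$ that sends $v$ to the function $\delta_v$ supported on $C$ defined by $\delta_v(c) = c^{-1}\cdot v$. The first step is to verify that $\iota$ is $C$-equivariant and continuous: the identity $\delta_{cv} = c \cdot \delta_v$ follows directly from the definitions of the $H$-action $(h\cdot f)(x) = f(h^{-1}x)$ and the transformation law $f(hc') = (c')^{-1}f(h)$, while continuity of $\iota$ is clear because its image is precisely the $C=1\cdot C$ summand in the locally convex direct sum decomposition $c\text{-}\Ind^H_C(V) = \bigoplus_{gC \in H/C} g\cdot V$ recalled just above the proposition.

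Given $\Phi \in \Hom_H(c\text{-}\Ind^H_C(V), Z)$, I would define $\alpha(\Phi) := \Phi \circ \iota \in \Hom_C(V, Z_{|C})$. Conversely, given $\phi \in \Hom_C(V, Z_{|C})$, I would fix a system of coset representatives $\{g_i\}$ for $H/C$ and define $\beta(\phi) : c\text{-}\Ind^H_C(V) \to Z$ componentwise on the direct sum decomposition by
\[
\beta(\phi)\bigl(g_i \cdot \iota(v)\bigr) := g_i \cdot \phi(v).
\]
Well-definedness with respect to a different choice of representatives, replacing $g_i$ by $g_i c$ and $v$ by $c^{-1}v$, is immediate from the $C$-equivariance of $\iota$ and $\phi$. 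Continuity follows from the universal property of the locally convex direct sum: a linear map out of $\bigoplus_i g_i \cdot V$ is continuous if and only if each restriction $v \mapsto g_i \cdot \phi(v)$ is continuous, which holds since $\phi$ is continuous and translation by $g_i$ on $Z$ is continuous.

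The $H$-equivariance of $\beta(\phi)$ is verified by a direct computation: for $h \in H$, writing $h g_i = g_{\sigma(i)} c_i$ with $c_i \in C$, one has $h\cdot(g_i \cdot \iota(v)) = g_{\sigma(i)} \cdot \iota(c_i v)$ by the $C$-equivariance of $\iota$, and applying $\beta(\phi)$ gives $g_{\sigma(i)} c_i \cdot \phi(v) = h \cdot \beta(\phi)(g_i \cdot \iota(v))$. Finally, I would check that $\alpha$ and $\beta$ are mutually inverse: the identity $\alpha(\beta(\phi)) = \phi$ is tautological (evaluate at the summand indexed by the identity coset), while $\beta(\alpha(\Phi)) = \Phi$ follows from $H$-linearity of $\Phi$ once we observe that $\Phi$ is determined by its values on $\iota(V)$ because the direct summands $g \cdot V$ are obtained by translating $\iota(V)$ by $g \in H$. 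Functoriality of the bijection in both $V$ and $Z$ is straightforward.

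No step presents a serious obstacle; the only point requiring care is the topological one, namely that the locally convex direct sum topology on $c\text{-}\Ind^H_C(V)$ really does make $\beta(\phi)$ continuous. This however is immediate from the universal property and was already used implicitly in the construction of $c\text{-}\Ind^H_C$ preceding the proposition.
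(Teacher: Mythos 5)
Your proof is correct and follows essentially the same approach as the paper's: both construct the bijection by composing with the canonical inclusion $\iota: V \hookrightarrow c\text{-}\Ind^H_C(V)$ in one direction and extending $\phi$ to $\sum_g g v_g \mapsto \sum_g g\phi(v_g)$ in the other. You simply spell out the well-definedness, continuity, and equivariance checks that the paper dispatches with "the proof is the same as in the case of smooth representations."
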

	
	\begin{proof}
		The proof is the same as in the case of smooth representations. Indeed, let $f \in \Hom_H(c-\Ind^H_C(V),Z).$ By composing it with the natural inclusion $V \hookrightarrow c-\Ind^H_C(V)$ we get a $C$-equivariant map $V \to Z$. On the other hand if $h \in  \Hom_C(V,Z_{|C})$ then we define $f: c-\Ind^H_C(V) \to Z$ by $f(\sum_g gv_g)=\sum_g gf(v_g).$ 
		The maps are inverse to each other.
	\end{proof}
	
	For later use we also consider a variant of the above construction which is common in smooth representation theory.
	Let $Z=Z(G)\subset G$ be the centre of $G.$ Then we define for any locally analytic $CZ$ -representation $V$
	$$c-\Ind^H_{ZC}(V):=\{f \in \Ind^H_{ZC}(V) \mid f \mbox{ has compact support modulo $Z(G)$} \}.$$ For more details, we refer to the appendix.
	
	The  representations $c-\Ind^H_C(V)$  and $c-\Ind^H_{ZC}(V)$ are of compact type by \cite[Prop 1.2. ii]{ST1} but in general not admissible.
	For this reason we enlarge the concept of "admissibility". 
	
	\begin{dfn}\label{dfn_qa}
		A locally analytic $H$-representation $V$ is called ind-(strongly )admissible if $V_{|C}=\varinjlim_n V_n$ is a strict inductive limit of (strongly) admissible locally analytic $C$-representations $V_n.$
	\end{dfn}

	The above definition does not depend on the chosen compact open subgroup  $C$ as in the case of a (strongly) admissible $H$-representation, cf.  Corollary \ref{indep} for a proof. Also if $W$ is a (strongly) admissible $C$-representation, then $c-\Ind^H_C(W)$ is ind-(strongly) admissible, cf. the proof of Proposition \ref{ind-stradm}.
	
	Clearly any (strongly) admissible $H$-representation is ind-(strongly )admissible. On the other hand, a kind of  converse is true.
	
	\begin{lemma}
		Every admissible representation is ind-strongly admissible.
	\end{lemma}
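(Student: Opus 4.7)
The strategy is to use a Fr\'echet-Stein presentation of $D(C)$ arising from a decreasing cofinal sequence $C = C^{(0)} \supset C^{(1)} \supset \cdots$ of compact open normal subgroups of $C$. Let $D(C) = \varprojlim_n A_n$ be the corresponding presentation, where $A_n$ is the Noetherian Banach algebra of $C^{(n)}$-analytic distributions on $C$. Since $C^{(n)}$ is normal of finite index in $C$, $A_n$ decomposes as a finite direct sum, indexed by cosets of $C/C^{(n)}$, of translates of the algebra of analytic distributions on $C^{(n)}$; in particular, $A_n$ is finitely generated as a $D(C^{(n)})$-module.

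Because $V$ is admissible, $V'_b$ is coadmissible over $D(C)$, so $V'_b = \varprojlim_n M_n$ with $M_n := A_n \otimes_{D(C)} V'_b$ finitely generated over $A_n$. Set $V_n := (M_n)'_b$. The $A_n$-action on $M_n$ dualises to a Banach $C$-representation structure on $V_n$ whose orbit maps are $C^{(n)}$-analytic on each coset, so $V_n$ is an object of $\Rep^{la}_K(C)$. Moreover, $V_n' = M_n$ is finitely generated over $D(C^{(n)})$ (since $M_n$ is finitely generated over $A_n$ and $A_n$ over $D(C^{(n)})$), whence $V_n$ is strongly admissible. The transition maps $M_{n+1} \to M_n$ have dense image, so their duals $V_n \hookrightarrow V_{n+1}$ are closed embeddings; a direct topological check identifies the subspace topology on $V_n$ with its Banach topology, making the inductive system $(V_n)_n$ strict.

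To conclude, since $V$ is admissible it is of compact type, so $V'_b$ is nuclear Fr\'echet. Strong duality applied to the Fr\'echet-Stein presentation then yields $V_{|C} = \varinjlim_n V_n$ topologically as a strict inductive limit, by standard facts in non-archimedean functional analysis (cf.\ \cite[\S 1]{ST1}). Thus $V$ is ind-strongly admissible.

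The principal obstacle is verifying that the Banach $C$-action on $V_n$ induced from the $A_n$-module structure on $M_n$ is genuinely locally analytic rather than merely continuous: this amounts to checking that the orbit maps $C \to V_n$ are $C^{(n)}$-analytic on each coset of $C^{(n)}$, which follows by dualising the $C^{(n)}$-analyticity of the $A_n$-action on $M_n$, together with the standard fact that weak analyticity of a map into a Banach space implies strong analyticity.
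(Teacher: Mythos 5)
The proof breaks at the claim that each $V_n := (M_n)'_b$ is strongly admissible. You argue that $A_n$ is finitely generated over $D(C^{(n)})$ because it decomposes into finitely many translates of the algebra of analytic distributions on $C^{(n)}$. But that Banach algebra is a \emph{completion} of $D(C^{(n)})$, not a finite module extension of it: the natural map $D(C^{(n)}) \to A_n$ is a dense embedding of a Fr\'echet--Stein algebra into a Noetherian Banach algebra, and the target is far from being generated by finitely many elements over the source. Consequently, $M_n$ is not finitely generated over $D(C^{(n)})$ either. One can see this directly: a finitely generated $D(C^{(n)})$-module is coadmissible, hence a nuclear Fr\'echet space; but $M_n$ is an infinite-dimensional Banach space, and an infinite-dimensional Banach space is never nuclear. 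Dually, $V_n$ is an infinite-dimensional Banach space and therefore not of compact type, so it cannot be (strongly) admissible regardless of the finite-generation question.

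The strictness claim fails for a related reason. The transition maps $M_{n+1}\to M_n$ have dense image but are not surjective, so the dual inclusions $V_n\hookrightarrow V_{n+1}$ are compact injections between infinite-dimensional Banach spaces, and such maps are never closed embeddings. What you have actually built is the compact-type presentation $V\cong\varinjlim V_{i,i}$ discussed in the appendix, which is there expressly flagged as a generally \emph{non-strict} inductive limit of Banach spaces -- exactly the opposite of what ind-(strong) admissibility requires. The paper's proof avoids this by working inside $M=V'_C=\varprojlim M_n$ itself: one lifts finite generating sets of each $M_n$ to $M$, lets $T_n$ be the closure of the submodule generated by all lifts of level $\geq n$, and sets $V_n:=(M/T_n)'$. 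Each $M/T_n$ is then a coadmissible $D(C)$-module with a finite generating set, so each $V_n$ genuinely is a strongly admissible $C$-representation and a closed subspace of $V$, and the resulting chain is strict.
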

	
	\begin{proof}
		Let $V$ be an admissible $H$-representation. By definition the dual of its restriction to  to $C$  can be written as a countable projective limit of finitely generated modules over Banach algebras, i.e., $M:=V_C' =\varprojlim_{n\in \bbN} M_n.$ By \cite[Cor. 3.1]{ST2} we can suppose that the  finitely many generators of $M_n$ lift to elements $m_n^i,i\in I_n,$ of $M$. We consider the topological closure $T_n:=\overline{\langle m_n^i \mid i \in I_s, s\geq n \rangle}$  of the submodule generated by the above lifts which is a closed submodule of $M$. The quotient $M/T_n$ is thus by \cite[Lemma 3.6]{ST2} a coadmissible module which is generated by the finite set $\bigcup_{r < n} \{m^r_i \mid i\in I_r\}.$ Hence the dual  $V_n:=(M/T_n)'$ is strongly admissible and a closed subrepresentation of $V$. We get $V=\bigcup_n V_n$ with $V_m \subset V_n$ for $m<n$ as $M=\varprojlim_n M/T_n.$ 
	\end{proof}
	
	It follows that  every ind-admissible representation is ind-strongly admissible. In the  sequel we often say simply ind-admissible for convenience.

	\begin{lemma}
		\label{iacompact}
		Let $V=\varinjlim_n V_n$ be an ind-admissible representation. Then $V_n$ is a closed subrepresentation of $V_{|C}$ for all $n\in \bbN.$ Moreover, $V$ is of compact type. In particular it is Hausdorff, complete, bornological and reflexive.
	\end{lemma}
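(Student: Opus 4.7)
The plan is to establish the three assertions in sequence, reducing the bulk of the statement to the claim that $V$ is of compact type, from which the topological properties follow automatically.

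For the first claim, I would observe that the underlying topological vector space of $V$ coincides with $V_{|C}$ by construction, so one is dealing with a countable strict inductive limit of Hausdorff locally convex spaces. In a strict inductive limit each transition $V_n \hookrightarrow V_{n+1}$ is a topological embedding with closed image, and a standard diagram chase using the universal property of the inductive limit topology (see, e.g., \cite[\S 5]{S1}) shows that each $V_n$ is then closed in $V_{|C} = \varinjlim_m V_m$.

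For the second claim, I would exploit that each $V_n$, being admissible, is of compact type, so that one may write $V_n = \varinjlim_k V_{n,k}$ with $V_{n,k}$ a $K$-Banach space and each transition $V_{n,k} \hookrightarrow V_{n,k+1}$ injective and compact. The goal is then to construct a single cofinal chain of Banach spaces inside the doubly-indexed system $\{V_{n,k}\}_{n,k}$ with injective and compact transition maps. I would do this by a diagonal argument: starting from $k(0) = 0$, given $k(n)$ I use the factorization property that a continuous linear map from a Banach space to a compact-type space factors through one of its defining Banach spaces (cf. \cite[\S 16]{S1}) to find some $k'$ such that the composite $V_{n,k(n)} \hookrightarrow V_n \hookrightarrow V_{n+1}$ factors through $V_{n+1,k'}$, and then set $k(n+1) = k'+1$. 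The composite $V_{n,k(n)} \to V_{n+1,k'} \to V_{n+1,k(n+1)}$ is then compact because the second arrow is a compact transition map within $V_{n+1}$. The resulting chain $(V_{n,k(n)})_n$ has union $V$, is cofinal in the doubly-indexed system, so the corresponding inductive limit topology on $V$ agrees with the original one, and exhibits $V$ as of compact type.

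The remaining properties are then standard: any space of compact type is Hausdorff, complete, bornological, reflexive (and in fact nuclear), see \cite[\S 16]{S1} or \cite[Prop.~1.2]{ST1}.

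The main obstacle I foresee is the diagonal/cofinality step. One must check both that the factorization of $V_{n,k(n)} \to V_{n+1}$ through some Banach space $V_{n+1,k'}$ is genuinely continuous (which uses that the inclusion $V_n \hookrightarrow V_{n+1}$ is a topological embedding together with the fact that bounded sets in a compact-type space lie in a defining Banach subspace) and that the resulting subchain $(V_{n,k(n)})_n$ is cofinal in $\{V_{n,k}\}_{n,k}$, so that the two inductive limit topologies coincide. Once this is carefully verified, the conclusions fall out from general nonarchimedean functional analysis.
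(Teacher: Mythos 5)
The paper disposes of this lemma by three citations: closedness of each $V_n$ is \cite[Prop.~5.5(iii)]{S1}, the compact-type assertion is \cite[Thm.~1.2]{ST1} (admissible $\Rightarrow$ compact type by definition), and the remaining topological properties come from \cite[Thm.~1.1]{ST1}. You instead re-derive the compact-type claim, which is the heart of the lemma, by an explicit diagonalization. That is a legitimate alternative route, and the first and third steps of your argument are fine, but the diagonal as you have set it up does not close the gap you yourself flag. Your recursion $k(n+1)=k'+1$, with $k'$ the index through which $V_{n,k(n)}\hookrightarrow V_{n+1}$ factors, guarantees that each linking map $V_{n,k(n)}\to V_{n+1,k(n+1)}$ is compact, but it does not guarantee that $\bigcup_n V_{n,k(n)}=V$, i.e.\ that the chain is cofinal. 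For $v\in V_{m,j}$ with $j>k(m)$, the minimal indices $j_i$ with $V_{m+i-1,j_{i-1}}\hookrightarrow V_{m+i,j_i}$ only satisfy $j_i\geq k(m+i)-1$, and nothing in the construction forces them to drop to $\leq k(m+i)$: the recursion at each stage looks only at $V_{n,k(n)}$ and never at the other Banach blocks $V_{n,j}$, so it has no mechanism for catching an arbitrary $(m,j)$.

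The repair is standard but must be made explicit. Enumerate all pairs $(m_i,j_i)\in\mathbb{N}^2$ and build the chain so that at step $i$ one chooses $n_{i+1}>\max(n_i,m_{i+1})$ and then $\ell_{i+1}$ strictly larger than indices through which both the previous term $W_i=V_{n_i,\ell_i}$ and $V_{m_{i+1},j_{i+1}}$ factor into $V_{n_{i+1}}$; the strict increase makes $W_i\to W_{i+1}$ compact, and the enumeration forces every $V_{m,j}$ into the chain, so the union is $V$ and the topologies agree. Alternatively, in the ind-admissible setting the filtrations are not arbitrary: $V_{n,k}=(D_{r_k}(C)\otimes_{D(C)}V_n')'$ is functorial in $V_n$, so there are compatible maps $V_{n,k}\to V_{n+1,k}$ for fixed $k$, and the plain diagonal $V_{n,n}$ then works because any $V_{m,k}$ maps into $V_{\max(m,k),\max(m,k)}$ (this is exactly the route taken in the appendix preceding Lemma~\ref{limiso}). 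Either fix is short, but without one of them the claim that "the resulting chain has union $V$, is cofinal" is unsupported.
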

	
	\begin{proof}
		The first statement follows from \cite[Prop. 5.5 iii)]{S1}.
		The last assertion is the content of \cite[Theorem 1.1]{ST1}.
		The second statement follows from \cite[Theorem 1.2]{ST1} since any admissible representation is of compact type by definition.
	\end{proof}
	
	\begin{prop}\label{proj_lim_Frechet}
		Let $V=\varinjlim_n V_n$ be an ind-admissible $H$-representation. Then $V'_b=\varprojlim_n (V_n)'_b$ is the locally convex projective limit of the Fr\'echet spaces  ($V_n)'_b$. In particular $V'_b$ is a Fr\'echet space.
	\end{prop}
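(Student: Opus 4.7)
The plan is to identify $V'_b$ with the projective limit by constructing a natural continuous bijection and then invoking the open mapping theorem to promote it to a topological isomorphism.

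First I would collect what is already known. By the previous lemma, $V$ is of compact type and each $V_n$ is a closed subrepresentation of $V|_C$, so in particular $V = \bigcup_n V_n$ set-theoretically. Since each $V_n$ is admissible, it is of compact type, and under the duality equivalence recalled at the beginning of Section~2 its strong dual $(V_n)'_b$ is a nuclear Fr\'echet space. As the index set $\bbN$ is countable, the projective limit $\varprojlim_n (V_n)'_b$ is a Fr\'echet space. Similarly $V'_b$ is a nuclear Fr\'echet space.

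Next I would construct the comparison map. Each continuous inclusion $\iota_n: V_n \hookrightarrow V|_C$ induces by composition a restriction $\iota_n^\ast: V'_b \to (V_n)'_b$, which is continuous for the strong topologies because bounded subsets of $V_n$ remain bounded in $V$. These restrictions are compatible with the transition maps of the projective system, so by the universal property of projective limits they assemble into a continuous $K$-linear map $\Phi: V'_b \to \varprojlim_n (V_n)'_b$.

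Then I would verify bijectivity. Injectivity is immediate: if $f \in V'_b$ vanishes on each $V_n$, then $f \equiv 0$ on the set-theoretic union $V = \bigcup_n V_n$. For surjectivity, let $(f_n)_n \in \varprojlim_n (V_n)'_b$ be a compatible family; define $f: V \to K$ by $f(v) = f_n(v)$ whenever $v \in V_n$. This is well defined and linear by compatibility, and since $V|_C = \varinjlim_n V_n$ as locally convex spaces, the universal property of the inductive limit implies that $f$ is continuous; thus $f \in V'_b$ with $\Phi(f) = (f_n)_n$.

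Finally, $\Phi$ is a continuous $K$-linear bijection between Fr\'echet spaces and hence a topological isomorphism by the nonarchimedean open mapping theorem \cite[8.6]{S1}. The step I expect to require the most care is the strong continuity of $\Phi$ and its inverse; as an alternative to the open mapping theorem one could argue directly that bounded subsets of the strict inductive limit $\varinjlim_n V_n$ lie and are bounded in some single $V_n$, so that the defining families of seminorms of $V'_b$ and $\varprojlim_n (V_n)'_b$ coincide. Either route yields the desired identification, and the Fr\'echet property of $V'_b$ follows at once.
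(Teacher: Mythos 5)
Your argument is correct and reaches the same conclusion along essentially the same skeleton as the paper (build the natural map $\Phi$, check injectivity from $V=\bigcup_n V_n$, check surjectivity by assembling a compatible family into a functional, then promote the continuous bijection to an isomorphism via the open mapping theorem). Two of your intermediate steps, however, are handled differently from the paper's proof, in both cases in a more abstract and streamlined way.

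First, to show that the assembled functional $l$ is continuous, the paper follows the argument of \cite[Prop.~5.5]{S1} and constructs by hand a decreasing-compatible chain of lattices $L_i\subset V_i$ whose sum gives a lattice in $V$ mapping into a fixed lattice in $K$. You instead invoke the universal property of the locally convex inductive limit: since $V|_C=\varinjlim_n V_n$ in the locally convex category and each restriction $l|_{V_n}=f_n$ is continuous, $l$ is continuous. This is correct and shorter; the paper's lattice argument is essentially a proof of the same universal property in this situation.

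Second, to see that $V'_b$ is Fr\'echet, the paper proves this from scratch by showing the strong topology is defined by countably many lattices, citing \cite[Prop.~5.6, 8.1, 9.1, 16.10]{S1}. You instead appeal directly to the preceding lemma, which says $V$ is of compact type, together with the fact recorded in the preliminaries that strong duals of compact-type spaces are nuclear Fr\'echet. This is a legitimate shortcut given what has already been established in the paper, though it does make the final ``in particular'' conclusion of the proposition immediate rather than a consequence of the isomorphism. Your proposed alternative to the open mapping theorem (bounded subsets of the strict inductive limit are bounded in some single $V_n$, so the defining seminorms on both sides coincide) is exactly the content of \cite[Prop.~5.6]{S1} that the paper uses internally, and it would indeed give the topological identification directly without needing to know $V'_b$ is Fr\'echet in advance. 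Both routes are sound.
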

	
	\begin{proof}
		We follow the proof of \cite[Prop. 16.10]{S1} dealing with compact inductive limits.  We let $\Psi: V'_b \to \varprojlim_n (V_n)'_b$ be the natural map. Then $\Psi$ is clearly injective since $V=\bigcup_n V_n.$
		On the other hand, let $(l_n)_n$ be an element of $\varprojlim_n (V_n)'_b$. We define $l\in V'$ by
		$l(v):=l_i(v_i)$ if $v\in V_i.$ This construction gives rise to a well-defined element of the algebraic dual of $V.$ But it follows by the topology on $V$, that it is continuous , as  well. Indeed we follow the argument as in the proof of \cite[5.5 i)]{S1}. Let $L_i\subset V_i$  be a lattice which is in the preimage of of some fixed lattice $M\subset K$ under $l_i.$ Then we may construct inductively lattices $L_n, n\geq i$ such that that  $L_{n+1}\cap V_n\subset L_n$ and $L_n$ is in the preimage of the lattice $M$ under $l_n.$ Then $L:=\sum_i L_i$ is a lattice in $V$ which lies in the preimage of $M$ under $l.$     
		
		The map $\Psi$ is continuous since the maps $V'_b \to (V_n)'_b,  n\in \bbN,$ are continuous and by the very definition of the initial topology on  the right hand side.
		
		The space $V$ is bornological by \cite[Ex. 2 after Prop. 6.13]
		{S1}. Moreover, for each bounded subset $B\subset V$ there is by \cite[Prop. 5.6]{S1} some $i\in \bbN$ such that $B\subset V_i$ is a bounded subset. Since each $V_i$ is a countable limit of Banach spaces by \cite[Prop 6.5]{ST2} it follows by using Prop. 6.19 ii) and the argumentation in the proof of \cite[Prop. 16.10]{S1} that the topology of $V'$ can be defined by a countable limit of lattices. Hence it is by \cite[Prop. 8.1]{S1} metrizable and thus a Fr\'echet space by \cite[Prop. 9.1]{S1} since V is bornological. 
		
		On the other hand, the right hand side is also Fr\'echet space since it is the projective limit of Fr\'echet spaces (e.g. use \cite[Prop 8.1]{S1}).  It follows by the open mapping \cite[Prop. 8.6]{S1} theorem that $\Psi$ is a topological isomorphism.
	\end{proof}
	
	We continue with another definition which might be useful.
	
	\begin{dfn}
		A locally analytic $H$-representation $V$ is called ind-fJH if $V_{|C}=\varinjlim_n V_n$ is a strict inductive limit of locally analytic $C$-representations $V_n$ which admit a finite Jordan-Hölder series.
	\end{dfn}
	
	Clearly any locally analytic  representation which has a finite Jordan-Hölder series is strongly admissible. Hence any ind-fJH representation is ind-(strongly )admissible. 

	\begin{prop}\label{factorizes}
		Let $W=\varinjlim_n W_n$ be an ind-admissible $H$-representation. Let $f:V\to W$ be continuous map of $C$-representations where $V$ is locally analytic $C$-representation which has a finite JH-series. Then $f$ factorizes over some subspace $W_n.$
	\end{prop}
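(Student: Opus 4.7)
The plan is to analyze the preimages $U_n := f^{-1}(W_n) \subseteq V$. By the earlier lemma each $W_n$ is closed in $W_{|C}$ and $C$-stable, so continuity and $C$-equivariance of $f$ make each $U_n$ a closed $C$-stable subspace of $V$. Moreover, since $W = \bigcup_n W_n$, every $v \in V$ satisfies $f(v) \in W_n$ for some $n$, so $V = \bigcup_n U_n$ as an increasing union. The statement is therefore reduced to showing that $U_n = V$ for some (equivalently, all sufficiently large) $n$.

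First I would handle the base case in which $V$ is topologically irreducible. Then the only closed $C$-stable subspaces of $V$ are $0$ and $V$ itself, so each $U_n$ is either $0$ or the whole of $V$. Since the $U_n$ are increasing with union equal to $V$, at least one of them must be $V$, which is exactly what we want. For the general case I would induct on the length $k$ of the Jordan-H\"older filtration $0 = V^{0} \subset V^{1} \subset \cdots \subset V^{k} = V$ by closed $C$-subrepresentations whose successive quotients are topologically irreducible. Applying the inductive hypothesis to $f|_{V^{k-1}} \colon V^{k-1} \to W$ gives an index $m$ with $f(V^{k-1}) \subseteq W_m$, and hence $V^{k-1} \subseteq U_n$ for all $n \geq m$. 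The quotients $U_n / V^{k-1}$ are then closed $C$-stable subspaces of the topologically irreducible quotient $V/V^{k-1}$, and their union (over $n \geq m$) is all of $V/V^{k-1}$. Arguing as in the base case forces $U_n / V^{k-1} = V/V^{k-1}$ for some $n$, so $U_n = V$.

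I do not foresee a serious obstacle here: the argument is essentially purely order-theoretic, the only inputs being topological irreducibility of the JH-quotients, closedness of the $W_n$ in $W_{|C}$ (provided by the earlier lemma), and the formal identity $V = \bigcup_n f^{-1}(W_n)$. The subtlety to watch is that the JH-filtration should be taken in the topological sense (closed subobjects with topologically irreducible quotients), so that $U_n / V^{k-1}$ remains a closed subspace of the locally analytic quotient $V/V^{k-1}$; this is automatic in the usual conventions for locally analytic representation theory.
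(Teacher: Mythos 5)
Your proof is correct. You work directly with the preimages $U_n = f^{-1}(W_n)$, observe they are closed $C$-stable subspaces exhausting $V$, and then run an induction on the Jordan--H\"older length of $V$: the base case is immediate from topological irreducibility, and the inductive step passes to the irreducible top quotient $V/V^{k-1}$ after applying the hypothesis to $V^{k-1}$. The only non-trivial input you need is that each $W_n$ is closed in $W_{|C}$ (the preceding lemma) and that the image of the closed subspace $U_n \supseteq V^{k-1}$ in the Hausdorff quotient $V/V^{k-1}$ is again closed, both of which hold.

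Your route is genuinely different from the paper's, and arguably cleaner for the statement as given. The paper first proves that $f(V)$ is \emph{closed} in $W$, invoking admissibility results from Schneider--Teitelbaum (closed image of morphisms of admissible representations, coadmissibility of closed submodules) together with some general facts about inductive limits from Schneider's functional analysis book; it then replaces $V$ by $f(V) \cong V/\ker f$, reducing to the case $V \subset W$, and finishes by a counting argument on irreducible constituents. You bypass the ``closed image'' step entirely by pulling the $W_n$ back to $V$, where the JH-filtration lives. This makes your argument more elementary and self-contained. The trade-off is that the paper's detour establishes a useful by-product -- namely that $f(V)$ is closed in $W$ -- which the author re-uses in the proof of the subsequent proposition that the category of ind-admissible representations is abelian. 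So the two approaches are both valid; yours proves exactly what is stated, while the paper's version does slightly more work in order to harvest an extra fact for later.
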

	
	\begin{proof}
		We claim that the image $f(V)$ is closed in $W.$ Indeed write
		$V=\bigcup_n f^{-1}(W_n)$. Then each subspace $V_n:=f^{-1}(W_n)$ is closed in $V$ as $f$ is continuous and $W_n\subset W$ is closed. Since closed subrepresentations of admissible representations are admissible again \cite[Prop. 6.4]{ST2} we see that the restriction $f_n: V_n \to W_n$ is a homomorphism of admissible $C$-representations. As such the image $f_n(V_n)=f(V_n)$ is closed in $W_n$ by loc.cit. We apply \cite[Remark 7.1 v) and Lemma 7.9]{S1} to deduce that 
		$f(V)=\bigcup_n f(V_n)$ is closed in $W$.
		It follows that $f(V)\cong V/ker(f)$  as admissible representations by \cite[Prop. 6.4]{ST2}.
		
		For the last assertion we may suppose that $V\subset W.$ Let $W_m$ be a subspace with $W_m\cap V\neq (0)$.  If $V$ is topologically irreducible it follows that $V\subset W_m.$ Otherwise, one argues by the number of irreducible constituents of $V$ that there must be an index $n$ with $V\subset W_n.$ 
	\end{proof}
	
	We denote by $\Rep^{ind-adm}_K(H)$ the full subcategory of $\Rep^{la}_K(H)$ consisting of ind-admissible objects. The full subcategory consisting of objects which are ind-fJH is denoted by $\Rep^{ind-fJH}_K(H).$ 
	
	\begin{prop}
		\label{abeliancat}
		The categories $\Rep^{ind-adm}_K(H)$ and $\Rep^{ind-fJH}_K(H)$ are abelian.
	\end{prop}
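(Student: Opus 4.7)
The plan is to verify directly the axioms of an abelian category—zero object, biproducts, existence of kernels and cokernels, and the coincidence of image and coimage—and to check that ind-admissibility (respectively ind-fJH) is inherited under each construction. The zero representation is trivially ind-admissible, and finite biproducts are immediate: if $V_{|C} = \varinjlim V_n$ and $W_{|C} = \varinjlim W_n$, then $(V \oplus W)_{|C} = \varinjlim (V_n \oplus W_n)$, and each $V_n \oplus W_n$ is admissible (finite direct sums of coadmissible $D(C)$-modules are coadmissible).

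For the kernel of a continuous $H$-linear map $f \colon V \to W$, set $K := \ker(f)$, a closed $H$-subrepresentation of $V$. Using that each $V_n$ is a closed subrepresentation of $V_{|C}$ (the lemma preceding Proposition \ref{proj_lim_Frechet}), the intersection $K_n := K \cap V_n$ is closed in the admissible representation $V_n$, hence admissible by \cite[Prop. 6.4]{ST2}. Since $K = \bigcup_n K_n$ and each $K_n$ carries the subspace topology from $V_n$, the standard fact that closed subspaces of a strict inductive limit of the above form inherit the strict inductive limit topology of the intersections identifies $K_{|C}$ with $\varinjlim K_n$. For the cokernel, put $\bar I := \overline{f(V)}$ and let $Q := W/\bar I$ with its Hausdorff quotient topology; this is a locally analytic $H$-representation. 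Writing $W_{|C} = \varinjlim W_n$ and setting $\bar I_n := \bar I \cap W_n$, each $Q_n := W_n/\bar I_n$ is admissible, again by \cite[Prop. 6.4]{ST2}. The natural maps $Q_n \hookrightarrow Q$ are injective with dense union, and the analogous compatibility of quotients by closed subspaces with strict inductive limits identifies $Q_{|C}$ with $\varinjlim Q_n$.

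It remains to verify that every monomorphism is the kernel of its cokernel and every epimorphism is the cokernel of its kernel. For a monomorphism $f \colon V \hookrightarrow W$ the key point is that $f(V)$ is closed in $W$; writing $V_{|C} = \varinjlim V_n$ one sees that each $f(V_n)$ is closed in its admissible target piece of $W_{|C}$ (arguing as in the proof of Proposition \ref{factorizes}), and \cite[Lemma 7.9]{S1} assembles these into closedness of $f(V)$ in $W$. Hence $f = \ker(W \to W/f(V))$, and the dual statement for epimorphisms follows from the same closedness phenomenon applied to the coimage. The ind-fJH case is entirely analogous: a closed subrepresentation or Hausdorff quotient of a representation with a finite Jordan--Hölder series again admits such a series (of length at most the original), so the admissible pieces $K_n$ and $Q_n$ constructed above automatically lie in the ind-fJH subcategory.

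The main obstacle I expect is the compatibility of strict inductive limits with closed subspaces and with Hausdorff quotients in the present compact-type setting—namely, checking that the subspace topology on $K \subset V$ is indeed the strict inductive limit topology of the $K_n$, and that the quotient topology on $Q = W/\bar I$ is the strict inductive limit topology of the $Q_n$. These are standard for strict LF spaces but require some care here, since admissible representations are of compact type rather than Fréchet; one can reduce to the LF picture by passing to duals and using Proposition \ref{proj_lim_Frechet}, where the corresponding facts about coadmissible $D(C)$-modules are readily available via \cite[\S3]{ST2}.
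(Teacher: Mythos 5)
Your proof verifies the axioms directly by intersecting and quotienting against the chains $(V_n)$ and $(W_n)$, whereas the paper organises its argument around Proposition \ref{factorizes}: it treats the ind-fJH case first, using that proposition to factor each $f|_{V_n}$ through a single finite stage $W_{\tau(n)}$, and then reduces the ind-admissible case to that situation by replacing $W_n$ with $W_n+f(V_n)$. Both routes hinge on the same nontrivial compatibility that you flag at the end: a closed subrepresentation $K$ of $V=\varinjlim V_n$ must carry the strict inductive limit topology of the $K_n=K\cap V_n$, and the dual statement for quotients. Your remark that this is ``standard for strict LF spaces'' is misleading --- for closed subspaces of strict LF spaces this exact compatibility is a notoriously delicate point and can fail. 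The rigorous version in the present compact-type setting is Corollary \ref{closed} of the appendix, proved by dualising to pro-coadmissible modules and invoking a Mittag--Leffler argument (Douady's lemma); your suggestion to use Proposition \ref{proj_lim_Frechet} points in the right direction, but as written the burden of proof is simply deferred to an unjustified assertion.

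One further imprecision: you assert that ``each $f(V_n)$ is closed in its admissible target piece of $W_{|C}$.'' For general admissible $V_n$ (not of finite Jordan--H\"older length) there is no reason for $f(V_n)$ to land inside any single $W_m$. The closedness argument that actually works is the one in the first half of the proof of Proposition \ref{factorizes}: set $V_{n,k}=f^{-1}(W_k)\cap V_n$, observe that $f(V_{n,k})$ is closed in the admissible $W_k$ by \cite[Prop.~6.4]{ST2}, and only then assemble via \cite[Lemma~7.9]{S1}. The cleaner factorisation $f(V_n)\subset W_{\tau(n)}$ is available precisely when $V_n$ has finite Jordan--H\"older length, which is why the paper handles the ind-fJH case separately before treating the ind-admissible one.
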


	\begin{proof}
		Let $f:V\to W$ be a homomorphism of ind-admissible representations. Write as before  $V=\varinjlim_n V_n$ and $W=\varinjlim_n W_n.$ We suppose first that $V_n$ and $W_n$ have a finite Jordan-Hölder series for all $n.$ Then by Proposition \ref{factorizes} for every $n$ there is some $\tau(n)\in \bbN$ such that $f(V_n)\subset W_{\tau(n)}.$ It follows by the same reasoning as above that $f(V)=\bigcup_n f(V_n)$ is closed in $W.$
		In particular it follows that ${\rm coker}(f)=W/\im(f)=\varinjlim_n W_n+\im(f)/\im(f)=\varinjlim_n W_n/\im(f)\cap W_n$ is ind-fJH again. Further $\ker(f)\cap V_n$ is closed in $V_n$ and has a finite Jordan-Hölder series.
		Hence   $\ker(f)=\varinjlim_n \ker(f)\cap V_n$ is closed in $V$  and ind-fJH. Finally, we prove that $f$ is strict.  Any closed subspace $Y$ of $V$ is of the shape $\varinjlim_n Y_n$ with $Y_n \subset V_n$ closed. As the induced maps $f_n: V_n \to W_{\tau(n)}$ are all strict their images $f_n(Y_n)=f(Y_n)$ are closed in $W_{\tau(n)}.$ We get by the above reasoning that $f(Y)$ is closed in $W$ and hence in $f(V).$ The other properties of an abelian category are checked easily.
		
		If $V_n$ and $W_n$ are admissible for all $n$, then we deduce in the same way as above that  $f(V_n)$ is admissible again. Hence $W_n+f(V_n)$ is admissible, as well, and we may replace $W_n$ by $W_n+f(V_n)$ for all $n.$ Then we may assume that $f(V_n)\subset W_n$ for all $n$ and proceed as before.
	\end{proof}

	\vspace{1cm}
	\section{Cuspidal Lie algebra representations}
	In this section we recall the theory of cuspidal modules of a Lie algebra over the field of complex numbers $\bbC.$ We thus replace here our $p$-adic field $L$ by the field of complex numbers $\bbC.$
	
	Recall that $T\subset G$ is a maximal torus and 
	$B\subset G$ is a Borel subgroup with $T \subset B.$ We denote by $\Phi \subset \frt^\ast$ the attached root system and by $\Phi^+$ resp. $\Phi^-$ its subset of positive resp. negative roots.
	Let $\rho=\frac{1}{2} \sum_{\alpha \in \Phi^+}\alpha\in \frt^\ast$ and denote by $Q\subset \frt^\ast$ the root lattice. For every $\alpha \in \Phi$ let $x_\alpha$ be a standard generator of the corresponding one-dimensional root space.
	
	A weight module for $\frg$ is a $U(\frg)$-module $M$ such that $\frt$ acts semi-simply on $M$ with finite-dimensional weight spaces, i.e.,
	$$M= \bigoplus_{\chi \in \frt^\ast} M_\chi$$
	with $M_\chi=\{m\in M\mid tm=\chi(t)m\, \forall t\in \frt\}$ and $\dim M_\chi < \infty$ for all $\chi$. The weight modules form a full subcategory $\cW$ of the category of all Lie algebra representations. 
	
	let $Z(\frg)\subset U(\frg)$ be the centre of $\frg.$ For $\lambda \in \frt^\ast$ let $\chi_\lambda: Z(\frg) \to K$ be the attached central character \cite[1.7]{H1}.  A central character $\chi_\lambda$ is called regular if the stabiliser of $\lambda +\rho$ is trivial. Otherwise it is called singular. It is called integral if $\lambda \in \Lambda$ where $\Lambda \subset \frt^\ast $ is the weight lattice. 
	
	By a result of  Fernando \cite[Thm. 4.18]{Fe} every simple weight module is a quotient of a parabolically induced  representation $U(\frg) \otimes_{U(\frp)} W$ for some simple weight module $W$ of the Levi component in $\frp.$  A simple weight module $M$ is called cuspidal if it does not admit such a representation as a quotient with $\frp \not\subset \frg.$ By loc.cit. this is equivalent to the property that all $x_\alpha$ with $\alpha \in \Phi$ act injectively on $M$. Moreover, in this case there exists an integer $d\geq 1$ such that $\dim M_\chi=d$ for all $\chi$ with $M_\chi \neq 0$, cf. \cite[Cor. 1.5]{Ma}. In particular $x_\alpha$ acts bijectively on $M$ for all $\alpha \in \Phi.$  We call the integer $d$ the degree of $M$.
	Again by a result of Fernando \cite[Thm. 5.2]{Fe} for simple Lie algebras $\frg$  cuspidal representations only exists for $\frg$ of type $A_n$ or $C_n.$ 
	
	We extend the definition  of cuspidality as in the paper \cite{GS} of Grantcharov and Serganova to non-simple modules.
	
	\begin{dfn}
		A weight module $M \in \cW$ is called cuspidal if the multiplication maps $M_\chi \stackrel{x_\alpha}{\to} M_{\chi+\alpha}$ are bijective for all $\alpha \in \Phi$ and for all $\chi\in \frt^\ast$.
	\end{dfn}
	
	\begin{rmk}
		Such cuspidal modules are sometimes also called  torsion free modules, cf. \cite[Remark below  Cor. 1.4]{Ma}. By definition the latter modules are those weight modules such that elements of $\frg\setminus \frt$ act bijectively on $M$, cf. \cite{BKLM}.
	\end{rmk}

	We let $\cC$ be the category of cuspidal $\frg$-modules. 
	This is an abelian category where each object admits a finite Jordan-H\"older series.
	
	In the case of ${\bf G}={\bf \GL_{n+1}}$, or rather ${\bf G}={\bf {\rm SL}_{n+1}}$, we can describe these modules explicitly. We follow the construction of Britten, Lemire \cite{BL} and Grantcharov and Serganova \cite{GS}, respectively. We start with degree 1 cuspidal modules. We suppose that $\frt$ is the diagonal torus in $\frg$ and identify its dual space $\frt^\ast$ with $\bbC^{n+1}$ in the usual way.  
	
	Let $\mu=(\mu_0,\ldots,\mu_n) \in \mathbb{C}^{n+1}$.  
	We set as in \cite{GS} 
	$$|\mu|=\mu_1+\cdots + \mu_n \in \mathbb{C}  \mbox { and } t^\mu:=t_0^{\mu_0}\cdots t_n^{\mu_n}$$ for the multivariable
	$t=(t_0,\ldots,t_n)$ \footnote{considered as meromorphic functions or as formal symbols with the same arithmetic properties.}.
	Set 
	$$M^\mu:=\{f \in t^\mu \mathbb{C} [t_0^{\pm 1}, \ldots, t_n^{\pm 1}] \mid |\mu| f=E f \}.$$
	Here $E$ is the differential operator defined as $E=\sum_i t_i \partial/\partial t_i $. The action of $x_\alpha$ with $\alpha=\epsilon_i-\epsilon_j, i\neq j,$ is given by applying the operator  $t_i\cdot \partial/\partial t_j$. The action of $t=(x_0,\ldots,x_n)\in \frt$ on $t_0^{\lambda_0}\cdots t_n^{\lambda_n}$ is given by multiplication with $\sum_i x_i \lambda_i.$
	
	\begin{lemma}
		Suppose that $\mu_i \not\in \bbZ$ for all $i$. Then $M^\mu$ is simple cuspidal. Moreover,  $M^\mu = M^\nu$ if $\mu -\nu \in Q$. 
	\end{lemma}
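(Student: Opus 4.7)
The plan is to first work out the weight decomposition of $M^\mu$ explicitly. A generic element of $t^\mu\bbC[t_0^{\pm 1},\ldots,t_n^{\pm 1}]$ is a finite sum of monomials $t^{\mu+\lambda}=t_0^{\mu_0+\lambda_0}\cdots t_n^{\mu_n+\lambda_n}$ with $\lambda\in\bbZ^{n+1}$, on which $\frt$ acts through the weight $\mu+\lambda$ and $E$ acts by the scalar $|\mu|+\sum_i \lambda_i$. The equation $Ef=|\mu|f$ therefore cuts out precisely the monomials with $\sum_i\lambda_i=0$, i.e.\ $\lambda\in Q$. Hence
\[
M^\mu=\bigoplus_{\lambda\in Q}\bbC\, t^{\mu+\lambda},
\]
with each weight space $M^\mu_{\mu+\lambda}$ one-dimensional. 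Since the $t_i\partial/\partial t_j$ preserve this Laurent ring and the relation $Ef=|\mu|f$, the formulas for the $\frg$-action given in the excerpt indeed define a $\frg$-module structure.

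Next I would verify cuspidality. A direct computation gives, for $\alpha=\epsilon_i-\epsilon_j$,
\[
x_\alpha\cdot t^{\mu+\lambda} \;=\; t_i\tfrac{\partial}{\partial t_j}\,t^{\mu+\lambda}\;=\;(\mu_j+\lambda_j)\,t^{\mu+\lambda+\alpha}.
\]
The hypothesis $\mu_j\notin\bbZ$ forces $\mu_j+\lambda_j\neq 0$ for every $\lambda_j\in\bbZ$, so the map $M^\mu_{\mu+\lambda}\to M^\mu_{\mu+\lambda+\alpha}$ is a nonzero scalar, hence bijective. This is exactly the cuspidality condition.

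For simplicity, let $N\subset M^\mu$ be a nonzero submodule. Since $\frt$ acts semisimply with one-dimensional weight spaces, $N$ contains some $t^{\mu+\lambda_0}$. Given any target weight $\mu+\lambda$, the difference $\lambda-\lambda_0\in Q$ can be written as a sum of simple roots, and by what we just showed every $x_\alpha$ acts as an automorphism between the respective one-dimensional weight spaces. Applying the appropriate sequence of $x_\alpha$'s to $t^{\mu+\lambda_0}$ produces a nonzero multiple of $t^{\mu+\lambda}$, so $N=M^\mu$.

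Finally, for the equality $M^\mu=M^\nu$ when $\mu-\nu\in Q$: writing $\mu=\nu+\lambda_0$ with $\lambda_0\in Q$, multiplication by the Laurent monomial $t^{\lambda_0}$ identifies the ambient spaces $t^\mu\bbC[t_0^{\pm 1},\ldots,t_n^{\pm 1}]=t^\nu\bbC[t_0^{\pm 1},\ldots,t_n^{\pm 1}]$, and since $|\lambda_0|=0$ the eigenvalue equations $Ef=|\mu|f$ and $Ef=|\nu|f$ coincide on this space; equivalently, the explicit basis $\{t^{\mu+\lambda}\mid\lambda\in Q\}=\{t^{\nu+\lambda}\mid\lambda\in Q\}$ is the same subset of Laurent symbols, and the $\frg$-action is read off identically from the monomials. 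The only mild obstacle is purely notational — keeping the conventions for $|\mu|$ and for the identification $\frt^\ast\cong\bbC^{n+1}$ consistent — but there is no real difficulty; the argument is essentially a direct bookkeeping exercise on monomials once the weight decomposition is in place.
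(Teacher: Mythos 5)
Your proof is correct, and since the paper dispatches this lemma with ``The proof is an easy exercise'' and pointers to \cite{BL} and \cite{GS}, your direct verification is exactly what is being left to the reader; there is no competing strategy in the paper to compare against. The weight decomposition $M^\mu = \bigoplus_{\lambda \in Q} \bbC\, t^{\mu+\lambda}$, the computation $x_\alpha \cdot t^{\mu+\lambda} = (\mu_j+\lambda_j)\,t^{\mu+\lambda+\alpha}$ showing each $x_\alpha$ is a nonzero scalar on every one-dimensional weight space, the semisimplicity-of-$\frt$ argument for simplicity, and the observation that $\mu - \nu \in Q$ forces $t^\mu\bbC[t^{\pm 1}] = t^\nu\bbC[t^{\pm 1}]$ \emph{and} $|\mu| = |\nu|$ so that the two $E$-eigenspace conditions carve out the same subspace with the identical $\frg$-action, are all the right ingredients.

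One small wording slip in the simplicity step: an arbitrary $\lambda - \lambda_0 \in Q$ is a $\bbZ$-linear combination of simple roots, not a ``sum of simple roots'' (the coefficients can be negative). What saves the argument, and what you implicitly rely on, is that you proved bijectivity for $x_\alpha$ with $\alpha$ ranging over \emph{all} of $\Phi$ (positive and negative roots alike), so you may apply $x_{-\alpha_i}$ whenever a coefficient is negative. Rephrasing as ``$\lambda - \lambda_0$ is a $\bbZ$-linear combination of simple roots, and applying $x_{\pm\alpha_i}$ repeatedly (each bijective) carries $t^{\mu+\lambda_0}$ to a nonzero multiple of $t^{\mu+\lambda}$'' closes the gap. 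You may also wish to note explicitly that $[E, t_i\partial/\partial t_j] = 0$, which is what guarantees the $\frg$-action preserves the eigenspace $M^\mu$; you assert this but the reader benefits from seeing the one-line bracket computation.
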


	\begin{proof}
		The proof is an easy exercise. Cf. also \cite[Thm. 2.2, Thm. 1.8]{BL}, \cite[p. 5]{GS}.
	\end{proof}
	
	In order to introduce cuspidal modules of higher degree we let $P=P_{(1,n)}$ be the maximal standard parabolic subgroup of $G$ attached to the decomposition $(1,n).$ Let $Z$ be additionally an algebraic $P$-representation. Thus we get a homogeneous vector bundle $\cE_Z$ on projective space $\bbP^n_{\bbC}$ and therefore an action of $\frg$  on $\cE_Z(D_+(t_0))\cong \cO_{\bbP^n}(D_+(t_0))\otimes_{\bbC} Z.$   Then we set 
	$$M^\mu(Z):= M^\mu \otimes_{\bbC} Z = M^\mu \otimes_{\cO_{\bbP^n}(D_+(t_0))} \cE_Z(D_+(t_0))$$
	and supply this with the diagonal $\frg$-action on the right hand side.
	This is again a cuspidal $U(\frg)$-module but in general not simple.
	For every exact sequence $0 \to Z_1 \to Z_2 \to Z_3 \to 0$, the induced sequence 
	\begin{equation}\label{exact_W}
		0 \to M^\mu(Z_1) \to M^\mu(Z_2) \to M^\mu(Z_3) \to 0  
	\end{equation}
	is exact, as well.

	We recall the following theorem of Matthieu, cf. \cite[Thm.2.5]{GS}.
	
	\begin{thm}
		Let ${\bf G}={\bf SL_{n+1}}.$
		Let $M$ be a cuspidal $\frg$-module and suppose that its central character $\chi_M$ is non-integral or singular integral. Then there is some simple finite-dimensional algebraic $P$-representation $Z$ such that $M\cong M^\mu(Z).$ 
	\end{thm}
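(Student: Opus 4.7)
\emph{Plan.} Since the category $\cC$ of cuspidal modules is abelian with finite Jordan--H\"older series, I would first reduce to the case where $M$ is simple: if the composition factors are all of the form $M^\mu(Z_i)$ with the same $\mu$ (up to $Q$), then by the exactness \eqref{exact_W} one can reassemble $M$ as $M^\mu(Z)$ for an appropriate $Z$ of length equal to $M$, so the main content lies in the simple case. For simple cuspidal $M$ I would follow Mathieu's coherent family strategy. Fix a non-zero weight $\lambda$ of $M$; since every $x_\alpha$ acts bijectively, the multiplicative subset $S \subset U(\frg)$ generated by $\{x_\alpha : \alpha \in \Phi\}$ is an Ore set and $S^{-1}M$ is well-defined. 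The bijectivity moreover lets one attach, for every $\mu \in \bbC^{n+1}$, a ``twisted'' module $\Phi_\mu(M)$ whose underlying space is $S^{-1}M$ but on which $x_\alpha$ acts as $x_\alpha^{1+m_\alpha(\mu)}$ via the binomial expansion, giving a weight module whose support is the $Q$-coset $\lambda + \mu + Q$.

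Next I would show that the collection $(\Phi_\mu(M))_{\mu}$ assembles into a coherent family in the sense of Mathieu: the weight multiplicities are uniformly bounded by $\deg M$, and for each central element $z \in Z(\frg)$ its action on a weight space is a polynomial function of $\mu$. This is the step where the non-integrality or singularity of the central character enters: under this hypothesis, Mathieu's rigidity theorem says that a coherent family of bounded multiplicity is determined by its scalar character, and in particular is semisimple outside a Zariski-closed set of ``singular'' shifts $\mu$.

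The third step is to match this coherent family with the explicit one provided by the construction $M^\mu(Z)$. For $\frg = \frs\frl_{n+1}$ the modules $M^\mu(Z)$, as $Z$ runs over simple finite-dimensional algebraic $P$-representations, form a coherent family in the above sense; its weight multiplicities in the generic direction are computed geometrically on $\bbP^n$ and match those of $M$ on $\lambda+Q$. Comparing central characters and using the rigidity statement, I would conclude that there is some $\mu$ (lying in the same $Q$-coset as $\lambda$) and a unique simple algebraic $P$-representation $Z$ such that the semisimplification of $\Phi_\mu(M)$ contains $M^\mu(Z)$. Since both sides are simple cuspidal with the same character, this forces $M \cong M^\mu(Z)$.

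\emph{Main obstacle.} The hard part is really the coherent family rigidity: proving that bounded-multiplicity coherent extensions exist and are uniquely determined by their central character away from singular loci. This is Mathieu's deep technical result, and a self-contained proof is out of reach here; the plan relies on it as a black box, with the role of the hypothesis on $\chi_M$ being exactly to avoid the singular locus where non-trivial extensions could obstruct the identification with $M^\mu(Z)$. The explicit identification in step three, once rigidity is granted, reduces to comparing characters and using the simplicity already established for $M^\mu(Z)$ under the non-integrality condition on $\mu$.
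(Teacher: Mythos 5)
The paper does not prove this theorem; it is stated as a result of Mathieu and cited from [GS, Thm.~2.5], so there is no internal proof to compare against. What you have written is a reconstruction of the argument behind the cited source, and the overall direction (Mathieu's coherent families and their rigidity, matched against the explicit family built on $\bbP^n$) is the right one.

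However, the reduction in your first paragraph is a genuine gap. Knowing that all Jordan--H\"older factors of $M$ have the form $M^\mu(Z_i)$ does not let you ``reassemble'' $M$ as some $M^\mu(Z)$: the exactness statement (3.2.1) only says that short exact sequences of $P$-representations produce short exact sequences of cuspidal modules, not the converse. To run your reassembly you would need fullness of the functor $Z \mapsto M^\mu(Z)$ on the relevant block (in fact an equivalence of the block of cuspidal modules with a block of algebraic $P$-representations), and that equivalence is precisely the content of the Grantcharov--Serganova theorem you are trying to establish, so the step is circular. Note also that the theorem as stated asks for $Z$ \emph{simple}, which a reassembly from a filtration of length $>1$ would not produce. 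The remaining steps use Mathieu's rigidity as a black box, which you flag honestly; for a citation-level reconstruction that is reasonable, but in step three the phrase ``comparing central characters and using rigidity'' hides the place where the hypothesis on $\chi_M$ actually works: it is needed to exclude the non-singular integral blocks, where extra simple cuspidals exist that are not of the form $M^\mu(Z)$, and it is worth making that explicit rather than folding it into a character comparison.
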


	\begin{rmk}
		i) By \cite[Lemma 2.3]{GS} the central character of $M^\mu(Z)$ is given by $\chi_{\gamma(|\mu|\epsilon_0) +\tau}$
		where $\epsilon_0, \epsilon_1,\ldots,\epsilon_n   \in \bbC^{n+1}$ is the standard basis and $\gamma: \bbC^{n+1} \to \frt^\ast$ is the projection with kernel generated by $\epsilon_0 + \epsilon_i +\cdots +\epsilon_n.$ Further $\tau$ is the highest weight of $Z.$
		
		ii) There is also a description of simple cuspidal modules with non-singular and integral weights \cite{GS}. We will address this case in an upcoming paper.
	\end{rmk}
	
	\vspace{1cm}
	\section{The functor $\cF^G_\kappa$}
	
	Now we come back to our $p$-adic situation. We replace
	$\mathbb{C}$ by our coefficient field $K$ and consider the exponential series $\exp(X)=\sum_i X^n/n!.$ Its disc of  convergence is the open disc  $D_{p^{-1/p-1}}(0)=\{x\in K \mid |x|< p^{-1/p-1}\}.$  Further let $\log(X)=\sum(-1)^{n+1}X^n/n$ the logarithm series which converges on the open disc $D_1(1):=\{x\in K \mid |x-1|<1\}.$ 
	
	\begin{lemma}
		For each $\mu \in K$ there is some locally analytic character $\chi_\mu:  O_K^\times \to J^\times$ for some finite field extension $J$ of $K$ such that $\chi_\mu(x)=\exp(\mu \log (x))$ for $|x-1| \ll \epsilon.$
	\end{lemma}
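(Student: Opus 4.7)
The plan is to construct $\chi_\mu$ in two stages: first define it by the series $\exp(\mu\log(x))$ on a sufficiently small open subgroup of $O_K^\times$, and then extend it to the whole group after enlarging $K$ to a suitable finite extension $J$. Let $\pi$ be a uniformizer of $K$ and $e$ the absolute ramification index, so that $|\pi| = p^{-1/e}$, and write $U^{(n)} := 1+\pi^n O_K$ for $n\geq 1$.

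First I would choose $n$ large enough that the following two conditions both hold: (i) $n > e/(p-1)$, which guarantees that $\log: U^{(n)} \to \pi^n O_K$ is a topological group isomorphism with inverse $\exp$; and (ii) $|\mu|\cdot p^{-n/e} < p^{-1/(p-1)}$, which places $\mu\log(x)$ inside the disc of convergence of $\exp$ for every $x\in U^{(n)}$. Under these hypotheses, the composition $\chi_\mu^\circ(x) := \exp(\mu\log(x))$ is an absolutely convergent power series on $U^{(n)}$, defining a locally analytic map $U^{(n)}\to U^{(n)}\subset K^\times$. It is a group homomorphism, because $\log$ is one on $U^{(n)}$, multiplication by $\mu$ is additive, and $\exp$ is a homomorphism on its disc of convergence.

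Next I extend $\chi_\mu^\circ$ to all of $O_K^\times$. The quotient $O_K^\times / U^{(n)}$ is a finite abelian group; choose a decomposition $\bigoplus_{i=1}^r \bbZ/N_i\bbZ$ with generators $[g_i]$, $g_i\in O_K^\times$. Since $g_i^{N_i}\in U^{(n)}$, the values $\chi_\mu^\circ(g_i^{N_i})\in K^\times$ are defined, and I would pick for each $i$ an element $\zeta_i\in \overline{K}^\times$ with $\zeta_i^{N_i} = \chi_\mu^\circ(g_i^{N_i})$. Setting $J := K(\zeta_1,\ldots,\zeta_r)$ gives a finite extension of $K$, and the rule $\chi_\mu(g_i) := \zeta_i$ together with $\chi_\mu|_{U^{(n)}} := \chi_\mu^\circ$ extends to a well-defined homomorphism $\chi_\mu: O_K^\times \to J^\times$: the only relations among the chosen generators modulo $U^{(n)}$ are $g_i^{N_i}\in U^{(n)}$, which hold by construction.

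Local analyticity on all of $O_K^\times$ follows because on each coset $g U^{(n)}$, $\chi_\mu$ is given by the locally analytic function $x\mapsto \chi_\mu(g)\cdot \exp(\mu\log(g^{-1}x))$, and the cosets of $U^{(n)}$ form an open cover of finite index. Finally, for $x$ with $|x-1|$ small enough to lie in $U^{(n)}$, the formula $\chi_\mu(x) = \exp(\mu\log(x))$ holds by construction, which is the requested property. I expect the main obstacle to be purely the extension step: the analytic content is routine once $n$ is chosen large in terms of $|\mu|$, whereas enforcing the $N_i$-th power relations is precisely what forces the passage from $K$ to a finite extension $J$.
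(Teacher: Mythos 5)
Your proof is correct, and it takes a somewhat different route than the paper's. The paper decomposes $O_K^\times = R \times D_1(1)$ into Teichm\"uller representatives times $1$-units, defines the character on a small disc inside $D_1(1)$ via the binomial series $x^\mu = \sum_n \binom{\mu}{n}(x-1)^n$, then invokes the constructive extension lemma \cite[Prop.~45.6]{Schik} to extend from that disc to all of $D_1(1)$ (passing to a finite extension $J$ in the process), and finally sets $\chi_\mu$ to be trivial on the finite factor $R$. You instead start directly from $\exp(\mu\log x)$ on a single higher congruence subgroup $U^{(n)}$ (the two formulas agree where both converge) and do the extension to all of $O_K^\times$ in one step, by decomposing the finite abelian quotient $O_K^\times/U^{(n)}$ into cyclic factors and adjoining one $N_i$-th root per factor. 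Your hand-rolled well-definedness check of the extended homomorphism is sound (the only relations are $g_i^{N_i}\in U^{(n)}$ and commutativity, and both are respected by your assignment $g_i\mapsto\zeta_i$), and the local analyticity argument via translated cosets $gU^{(n)}$ is standard. The net effect is the same --- your version is more self-contained, essentially unwinding what Schikhof's Prop.~45.6 does in this particular case, while the paper's version is shorter on the page and in addition fixes a specific normalization of $\chi_\mu$ (trivial on the roots of unity), which your construction does not single out; neither version claims uniqueness, and the paper explicitly remarks that $\chi_\mu$ is not uniquely determined, so this is not a defect.
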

	
	\begin{proof}
		Let $q$ be the number of elements of the residue field of $O_K.$ Write $O_K^\times= R \times D_1(1)$ where $R$ is the set of $q-1$-roots of unity in $K$.  
		We consider as in \cite[Thm 47.10]{Schik} the power series $x^\mu:=\sum_{n=0}^\infty { \mu \choose n} (x-1)^n$ which converges on its disc $D$ of convergence depending of course on $\mu.$ We have  $D\subset D_1(1).$ Now the group $D_1(1)$ is finitely generated over its subgroup $D.$  Hence by the constructive proof \cite[Prop. 45.6]{Schik} we can extend the  function $x^\mu$ to a locally analytic character $\chi_\mu: D_1(1) \to J^\times$ for some finite field extension $J$ of $K$. On the subgroup $R$ the character $\chi_\mu$ is extended trivially.    
	\end{proof}
	
	\begin{rmk} The locally analytic character with the above property is not uniquely determined. 
	\end{rmk}
	
	We fix for $\mu\in K$ a locally character $\chi_\mu:O_K^\times \to J^\times$ as above and write also $t^\mu$ for $\chi_\mu(t)$. Then 
	let $\mu=(\mu_0,\ldots,\mu_n) \in K^{n+1}$. We set 
	again $|\mu|=\mu_0+\cdots + \mu_n \in K,$
	$t^\mu:=t_0^{\mu_0}\cdots t_n^{\mu_n}$ and 
	$$M^\mu:=\{f \in t^\mu J[t_0^{\pm 1}, \cdots, t_n^{\pm 1}] \mid |\mu|f=E f \}.$$
	Here $E$ is again the operator defined as $E=\sum_i t_i \partial/\partial  t_i $. 
	
	Since the character $\chi_\mu$ is a locally analytic character of $T(O_K)$ we get by scalar restriction from $T(O_K)$ to $T_0=T(O_L)$ a locally analytic action of $T_0$ on $M^\mu$
	in the sense of \cite{OS3}\footnote{In loc.cit. we have considered only objects in the category $\cO$ and a parabolic subgroup. This definition extends obviously to our situation.}. Let $Z$ be an algebraic $P_{(1,n)}$-representation.  Let $V$ be additionally a smooth $T_0$-representation. Then we get a locally analytic $T_0$-action on $M^\mu(Z) \otimes_K V'$  and thus via the trivial action $\frg$ on $V$ a separately continuous $D(\frg,T_0)$-module structure on $M^\mu(Z)\otimes_K V'.$ 
	Here we denote by $D(\frg,T_0)\subset D(G_0)$ the subalgebra generated by $\frg$ and $D(T_0).$ 
	We set $$\cF_\mu(Z,V):={\rm c-\Ind}^G_{G_0}\big((D(G_0) \otimes_{D(\frg,T_0)}  (M^\mu(Z)\otimes_K V'))'\big).$$
	This construction is functorial in each entry. Hence for a fixed $\mu$ we get thus a bifunctor
	$$\cF_\mu: \Rep_K^\alg(P_{(1,n)}) \times \Rep_K^\infty (T_0)\to \Rep^{la}_J (G).$$
	
	\vspace{0.5cm}
	\begin{prop}
		The functor $\cF_\mu$ is bi-exact.
	\end{prop}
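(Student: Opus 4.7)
The plan is to decompose the definition of $\cF_\mu$ into more elementary operations and verify exactness of each one separately.

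First, the assignment $Z \mapsto M^\mu(Z) = M^\mu \otimes_K Z$ is exact in $Z$ by (\ref{exact_W}), since it amounts to tensoring over the field $K$ with the vector space $M^\mu$ (the $\frg$-action being diagonal via the homogeneous vector bundle $\cE_Z$). After reducing to the case where $V$ is finite dimensional, the duality $V \mapsto V'$ is exact as well, and so $(Z,V) \mapsto M^\mu(Z) \otimes_K V'$ is bi-exact as a bifunctor into $D(\frg, T_0)$-modules (with the diagonal action, using that $T_0$ acts smoothly on $V'$ and that $\frg$ acts on $V'$ through zero).

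Second, and this is the crux of the argument, I would show that the base-change functor
$$N \longmapsto D(G_0) \otimes_{D(\frg, T_0)} N$$
is exact, i.e.\ that $D(G_0)$ is flat as a right $D(\frg, T_0)$-module. The approach is to exhibit a topological PBW-type decomposition of $D(G_0)$: after possibly shrinking $G_0$, one may arrange an Iwahori decomposition $G_0 = \bar N_0 \cdot T_0 \cdot N_0$, which yields a topological isomorphism $D(G_0) \cong D(\bar N_0) \, \hat\otimes_K \, D(T_0) \, \hat\otimes_K \, D(N_0)$. Combining this with the classical PBW decomposition $U(\frg) \cong U(\bar\frn) \otimes_K U(\frt) \otimes_K U(\frn)$ and the inclusion $U(\frt) \sub D(T_0)$, one expresses $D(G_0)$ as a suitable completed free module over $D(\frg, T_0)$, giving the required flatness compatibly with the Fr\'echet--Stein structure.

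Third, taking strong duals is exact on the compact-type / nuclear Fr\'echet spaces produced in the previous step, by the open mapping theorem together with Hahn--Banach. Finally, the compact induction functor $c-\Ind^G_{G_0}$ is exact because it is realised as the locally convex direct sum $\bigoplus_{g \in G/G_0} g \cdot (-)$ over the countable set $G/G_0$, and the formation of strict locally convex direct sums preserves short exact sequences.

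The main obstacle is the second step, namely the flatness of $D(G_0)$ over $D(\frg, T_0)$. In contrast with the principal-series case of \cite{OS2}, where the analogous subalgebra $D(\frp, L_P \cap G_0)$ arises transparently from a group-theoretic decomposition, here $D(\frg, T_0)$ mixes the full enveloping algebra with only the compact torus, so one has to combine PBW for $U(\frg)$ with a Mahler-type expansion for $D(G_0)$ to isolate the complementary $\bar N_0$- and $N_0$-directions as a flat topological complement compatible with the Fr\'echet--Stein presentation.
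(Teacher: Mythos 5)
Your proposal takes a genuinely different route from the paper's, and the route you choose has a gap at its central step. The paper does not prove bi-exactness of $\cF_\mu$ directly: it immediately passes to the trifunctor $\cF^G_\kappa$ and proves exactness there, following the method of [OS2, Prop.~4.9(a)] and [OS3, Prop.~3.7]. One resolves a weight module $M\in\cW_\kappa$ by the \emph{free weight modules} $M(\lambda)=U(\frg)\otimes_{U(\frt)}K_\lambda$ (the analogue of Verma modules in the principal-series case), notes the identity $D(G_0)\otimes_{D(\frg,T_0)}M(\lambda)\cong D(G_0)\otimes_{D(T_0)}J_\lambda$ for these, and then uses $D(G_0)=\bigoplus_{g\in G_0/I}\delta_g D(I)$ together with the Iwahori decomposition $I=(U_B^-\cap I)T_0(U_B\cap I)$ to analyze the resulting explicit tensor product. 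In particular, no flatness of $D(G_0)$ over $D(\frg,T_0)$ is asserted; the argument is tailored to the category of weight modules precisely because that is what makes the reduction to free weight modules possible. For the smooth entry the paper likewise fixes a locally analytic section of $G_0\to G_0/T_0$ to get an explicit vector-space description of $X_{M,Z,V}$, rather than invoking duality abstractly.

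Your plan instead is to show outright that $D(G_0)$ is flat as a right $D(\frg,T_0)$-module -- strictly stronger, and it would give exactness for arbitrary inputs. But you do not prove it; you only sketch a strategy. Level-wise, one can indeed use $D_r(I)\cong D_r(\bar N_0)\,\hat\otimes\,D_r(T_0)\,\hat\otimes\,D_r(N_0)$, PBW, and identifications like $U_r(\bar{\frn})\cong D_s(\bar N_0^m)$ to present $D_r(I)$ as finite free over the completion $D_r(\frg,T_0)$. However, the rank of that basis grows without bound as $r\to 1$, and converting level-wise freeness into an exactness statement over the Fr\'echet--Stein limits requires at least: that $D(\frg,T_0)$ is itself Fr\'echet--Stein; that the inputs $M^\mu(Z)\otimes_K V'$ are coadmissible over it; and a Mittag--Leffler-type argument for compatibility with $\varprojlim_r$. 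You flag this yourself as ``the main obstacle,'' and as written the crucial second step remains a plan rather than a proof. Either carry this flatness argument through (a nontrivial technical addition beyond what the paper establishes), or, as the paper does, avoid it entirely by resolving $M$ by free weight modules and computing the tensor product on those.
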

	
	Instead of proving this statement we extend our functor by considering arbitrary weight modules instead  of $M^\mu$  since this is needed for the proof anyway. We can extend the above functor in fact to weight modules as long as the weights are bounded. Consider the set $K^{n+1}/\bbZ^{n+1}$ and choose for any $\bar{\mu} \in K^{n+1}/\bbZ^{n+1}$ a representative $\mu\in K^{n+1}$ and a locally analytic character $\chi_\mu$ as  above. By identifying the set $\bbZ^{n+1}$ in the  usual way with  algebraic characters on $T$ we get for any element in $\mu+\nu\in \bar{\mu}$ a locally analytic character $\chi_{\mu+\nu}:=\chi_\mu\cdot \chi_\nu$.
	In this way we have declared  for each element in $K^{n+1}$  a locally analytic character of $T(O_K).$ 
	
	We consider for a real number $\kappa \geq 1$, the subcategory $\cW_{\leq \kappa}$ consisting of objects whose weights $\lambda\in K^{n+1}$ are bounded by $\kappa$, i.e., such that $|\lambda_i|\leq \kappa$ for all $0\leq i\leq n.$ 
	Hence all characters $\chi_\mu$ appearing as weights have a common $p$-adic extension field $J$ as the target coefficient field. We can thus generalize the above construction by defining for every $M \in \cW_\kappa$, $Z\in \Rep_K^{alg}(P_{(1,n)}) $ and $V\in \Rep_K^{\infty}(T_0),$
	$$X=X({M,Z,V}):=D(G_0) \otimes_{D(\frg,T_0)}  (M(Z)\otimes_K V').$$ This is a separately continuous $D(G_0)$-module\footnote{Here $M(Z)$ is defined as above.} on a nuclear Fr\'echet space. 
	Again we set
	$$\cF^G_\kappa(M,Z,V):= {\rm c-\Ind}^G_{G_0}\big((D(G_0) \otimes_{D(\frg,T_0)}  (M(Z)\otimes_K V'))'\big).$$ 
	Thus we obtain a tri-functor
	$$\cF^G_\kappa: \cW_\kappa \times \Rep_K^\alg(P) \times \Rep_K^\infty (T_0)\to \Rep^{la}_J (G).$$
	
	\begin{prop}
		The functor $\cF^G_\kappa$ is exact in each entry.
	\end{prop}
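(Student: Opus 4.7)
$\cF^G_\kappa$ decomposes as the composition of (a) formation of $M(Z)\otimes_K V' = M\otimes_K Z\otimes_K V'$, (b) $D(G_0)\otimes_{D(\frg,T_0)}(-)$, (c) strong dualization $(-)'_b$, and (d) compact induction $c-\Ind^G_{G_0}(-)$. I would verify exactness of each step separately; the two arrow-reversing operations, namely $V\mapsto V'$ in (a) and $(-)'_b$ in (c), then produce the variances of Theorem A(i): contravariance in $M$ and $Z$ and covariance in $V$.

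Steps (a), (c) and (d) are essentially formal. In (a), $Z$ is finite dimensional over the field $K$ and tensor products over a field are exact, so both $M\mapsto M\otimes_K Z$ and $Z\mapsto M\otimes_K Z$ are exact; a short exact sequence of smooth $T_0$-representations is in particular exact as $K$-vector spaces, $K$-vector space duality is exact, and tensoring with the $K$-vector space $M(Z)$ preserves exactness. In (c), the output of step (b) will be a nuclear Fr\'echet space in this situation, so strong duality is an exact contravariant antiequivalence onto compact-type spaces by the duality recalled in Section 2 (see \cite[\S 1]{ST1}), and strict short exact sequences pass to strict short exact sequences of duals. In (d), $c-\Ind^G_{G_0}(-)$ is, on the underlying locally convex vector space, a locally convex direct sum $\bigoplus_{g\in G/G_0} g\cdot(-)$ indexed by a countable set, and direct sums of strict short exact sequences of locally convex spaces remain strictly exact, whence compact induction is exact.

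The main content lies in step (b), where one needs $D(G_0)$ to be flat as a right $D(\frg,T_0)$-module, in a sense compatible with the topological tensor product used here. I would derive this from a PBW/Iwahori-type decomposition exhibiting $D(G_0)$ as a free module over $D(\frg,T_0)$ with respect to a complementary factor obtained from an Iwasawa-style decomposition of $G_0$, paralleling the flatness statement used for the pair $D(\frg,P_0)\subset D(G_0)$ in the principal series construction of \cite{OS2}. Ensuring compatibility of this algebraic decomposition with the locally convex topology---so that the resulting flatness applies to the completed tensor product implicit in the definition of $X_{M,Z,V}$---will be the main technical obstacle; once secured, exactness of $\cF^G_\kappa$ in each entry is immediate from the composition.
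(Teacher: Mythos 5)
Your decomposition of the functor into the four steps and your handling of (a), (c), (d) are fine and match the paper implicitly. The substantive difference is in step (b), and here your route is genuinely different from the paper's -- and, as stated, has a gap.

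You propose to exhibit $D(G_0)$ as a free (or at least flat) module over the subalgebra $D(\frg,T_0)$ and deduce exactness of $D(G_0)\otimes_{D(\frg,T_0)}(-)$ on arbitrary inputs. The paper does not attempt this. Instead it splits the problem by variable. For exactness in $V$, it chooses a locally analytic section $s$ of $G_0\to G_0/T_0$, sets $\cH=s(G_0/T_0)$, and rewrites $X_{M,Z,V}\cong D(\cH)\otimes_{U(\frg)} M\otimes_K V'$ as vector spaces, so that $V'$ only enters through an exact $\otimes_K$. For exactness in $M$, it reduces to free weight modules $M(\lambda)=U(\frg)\otimes_{U(\frt)}K_\lambda$, using the crucial simplification $D(G_0)\otimes_{D(\frg,T_0)}M(\lambda)=D(G_0)\otimes_{D(T_0)}J_\lambda$ -- the Lie algebra factor is absorbed. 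One is then left to analyze $D(G_0)$ as a module over $D(T_0)$ (not over $D(\frg,T_0)$!), for which the Iwahori decomposition $I=(U_B^-\cap I)\,T_0\,(U_B\cap I)$ gives an explicit description, and the argument then follows \cite[Prop.\ 3.7]{OS3}.

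The distinction matters: freeness of $D(G_0)$ over $D(T_0)$ (after the $M(\lambda)$-reduction) follows cleanly from the group-theoretic Iwahori/Iwasawa decomposition, whereas freeness or flatness of $D(G_0)$ over $D(\frg,T_0)$ mixes the group algebra and the enveloping algebra and is a delicate claim. At the level of the Banach completions $D_r$ one does get such finite free decompositions, but $D(G_0)$ is only the Fr\'echet--Stein inverse limit of these, and inverse limits do not preserve flatness; there is no obvious direct decomposition of $D(G_0)$ itself as a free $D(\frg,T_0)$-module compatible with the topology. Your proposal acknowledges that this is "the main technical obstacle," but it is exactly at this point that the paper departs from your strategy -- it circumvents the flatness question entirely by the free-weight-module reduction and the locally analytic section. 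If you want to follow your outline you would effectively need to reconstruct those two devices; the global flatness statement as you formulate it should not be taken for granted.
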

	
	\begin{proof}
		Since ${\rm c-\Ind}^G_{G_0}$ and $M(\cdot )$ are exact functors by (\ref{exact_W}) it is enough to see that the expression $X({M,Z,V})$ defines an exact functor in $M$ and $V.$ Here the approach is as in \cite[Prop. 4.9 a)]{OS2} resp. \cite[Prop. 3.7]{OS3}. As for the exactness in the smooth entry, we choose a locally analytic section $s$ of the projection $G_0 \to G_0/T$ and set $\cH=s(G_0/T_0).$  Then $X({M,Z,V})= D(\cH)\otimes_{U(\frg)} M \otimes_K V'$ as vector spaces.
		
		As for the exactness in $M$ the  proof is the same as in \cite[Prop. 3.7]{OS3} replacing the parabolic subgroup $P$ by $T$ and Verma modules by free weight modules $M(\lambda):=U(\frg)\otimes_{U(\frt)} K_\lambda$ with $\lambda\in K^{n+1}$.
		For such free weight modules we have $D(G_0)\otimes_{D(T_0,\frg)} M(\lambda) =  D(G_0) \otimes_{D(T_0)} J_\lambda.$ Then we let $I\subset G_0$ be the Iwahori subgroup
		and write $D(G_0) \otimes_{D(T_0)} J_\lambda= \bigoplus_{g\in G_0/I} \delta_g\cdot D(I) \otimes_{D(T_0)} J_\lambda.$ 
		Thus we may replace for proving the exactness $G_0$ by $I.$ Then we consider the Iwahori decomposition $I=(U_B^- \cap I)(T_0 \cap I) (U_B \cap I)=(U_B^- \cap I)T_0 U_B.$ Hence we may write $D(I) \otimes_{D(T_0)} J_\lambda = D((U_B^-)\cap I) \otimes_J D(U_B) \otimes  J_\mu).$ Then we proceed as in loc.cit for the remaining argument considering the tensor product $D((U_B^-)\cap I) \otimes_J D(U_B)$ (In loc.cit.  only the factor $D(U_B^-)$ is treated but the generalisation to the tensor product works in the same way.) 
	\end{proof}
	
	\begin{rmk}
		Alternatively, one may use for the second part of the above proof a recent result of Agrawal and Strauch \cite[Cor. 7.8.7]{AS}  which applies to more general closed subgroups of $G_0.$
	\end{rmk}

	Let $U\subset M(Z)$ be a finite-dimensional $T_0$-subspace generating $M(Z)$ as a $U(\frg)$-module. 
	Then we may identify similarly to \cite{OS2} our representation $\cF^G_\kappa(M,Z,V)$ with the closed subrepresentation 
	$$c-\Ind^G_{T_0}(U'\otimes Z)^\frd:=\{f\in c-\Ind^G_{T_0}(U'\otimes V) \mid \langle x,f \rangle=0 \, \forall x\in \frd  \}$$ of $c-\Ind^G_{T_0}(U' \otimes V)$ 
	where 
	\begin{equation}\label{pairing}
		\begin{array}{rccc}
			\langle \cdot , \cdot \rangle: & \left(D(G_0) \otimes_{D(T_0)}  U \right) \otimes_J \Ind^{G_0}_{T_0}(U' \otimes V) & \lra & C^{an}(G_0,V) .\\
			&&&\\
			& (\delta \otimes w) \otimes f & \mapsto & \Big[ g \mapsto \big(\delta \cdot_r (f(\cdot)(w))\big)(g)\Big]
		\end{array}
	\end{equation}
	\noindent Here, by definition, we have $\big(\delta \cdot_r (f(\cdot)(w))\big)(g) = \delta(x \mapsto f(gx)(w))$. 
	
	We denote by ${\bf 1}$ the trivial representation of $P$ and by ${\bf 1}^\infty$ the trivial (smooth) representation of $T_0$.
	If $Z$ or $V$ is the trivial representation then we simply omit it from the input in our functor as in \cite{OS2}. E.g., we write
	$\cF^G_\kappa(M)$ for  $\cF^G_\kappa(M,{\bf 1},{\bf 1}^\infty).$ 

	\begin{prop}
		Let $M\in \cW_\kappa$, $Z\in \Rep^{alg}_K(P_{(1,n)}), V\in \Rep^\infty_K(T_0)$. Suppose that $V$ is admissible.  Then the representation $X({M,Z,V})'$ is strongly admissible.
	\end{prop}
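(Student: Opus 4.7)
The plan is to unwind strong admissibility of $X'_{M,Z,V}$ into the finite generation of $X_{M,Z,V}$ as a $D(G_0)$-module, and then to exhibit an explicit finite generating family coming from the three factors $M$, $Z$ and $V'$.

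First I would reduce the problem. By definition, strong admissibility of the locally analytic $G_0$-representation $X'_{M,Z,V}$ means that its strong dual, canonically identified with $X_{M,Z,V}$, is finitely generated over $D(G_0)$. In view of the presentation $X_{M,Z,V} = D(G_0)\otimes_{D(\frg,T_0)}(M(Z)\otimes_K V')$, it is enough to produce finitely many $D(\frg,T_0)$-module generators of $M(Z)\otimes_K V'$; base change to $D(G_0)$ then gives the required generators of $X_{M,Z,V}$.

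Second, I would build the generators from each factor separately. Since $M\in \cW_\kappa$ is in particular a finitely generated weight module (this is built into the definition of $\cW_\kappa$ through $\mu_M$), I can choose finitely many $T_0$-weight vectors $m_1,\ldots,m_r\in M$ generating $M$ as a $U(\frg)$-module. Pairing with a finite basis $z_1,\ldots,z_s$ of the finite-dimensional algebraic $P_{(1,n)}$-representation $Z$ yields a finite $U(\frg)$-generating set $\{m_i\otimes z_j\}$ of $M(Z)=M\otimes_K Z$. The admissibility assumption on the smooth $T_0$-representation $V$ says that $V'_b$ is a finitely generated $D(T_0)$-module (here one invokes that smooth admissible representations of the compact group $T_0$ are admissible locally analytic, together with the Schneider--Teitelbaum interpretation of admissibility in terms of the dual); pick $D(T_0)$-generators $v'_1,\ldots,v'_t$.

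Third, I would combine these into generators of the tensor product. The family $\{m_i\otimes z_j\otimes v'_k\}$ should generate $M(Z)\otimes_K V'$ as a $D(\frg,T_0)$-module. The point is that $D(\frg,T_0)\subset D(G_0)$ is by definition generated by $U(\frg)$ and $D(T_0)$, the former acting on the $M(Z)$-factor only, the latter acting diagonally on $M(Z)\otimes V'$ via the locally analytic character $\chi_\mu$ on the weight spaces of $M(Z)$ and via the smooth action on $V'$. Once this is in place, base change gives finitely many $D(G_0)$-generators $\{1\otimes(m_i\otimes z_j\otimes v'_k)\}$ of $X_{M,Z,V}$, and the proposition follows.

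The main obstacle I anticipate is the verification in the third step: one must check that the diagonal $T_0$-action truly couples with the $U(\frg)$-action in a way compatible with passing through both types of generators. Concretely, after applying $u\in U(\frg)$ one lands in $(u\cdot(m_i\otimes z_j))\otimes v'_k$, and further action of $d\in D(T_0)$ acts simultaneously on both tensor factors; one must argue, using the weight space decomposition of $M(Z)$ and the smoothness of the $T_0$-action on $V'$, that every element of $M(Z)\otimes V'$ is nevertheless reached as a $D(\frg,T_0)$-combination of the chosen finite family. This bookkeeping — rather than the module-theoretic skeleton — is the delicate point.
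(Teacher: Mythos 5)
Your proof takes essentially the same route as the paper's one-line argument (``$M$ is finitely generated, $Z$ and $V$ are finite-dimensional, hence $X$ is finitely generated over $D(G_0)$''), and you supply more of the module-theoretic bookkeeping. Two remarks.

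First, the justification for finite generation of $V'_b$ over $D(T_0)$ is not right as stated. Smooth admissible $\Rightarrow$ admissible locally analytic $\Rightarrow$ $V'_b$ \emph{coadmissible} over $D(T_0)$ --- that is the content of the Schneider--Teitelbaum duality you invoke. Coadmissible is strictly weaker than finitely generated. In fact, for a smooth admissible $V$ one cannot in general conclude finite generation: take $T_0 = O_L^\times$ and $V = \bigoplus_{n\geq 1}\chi_n^{\oplus n}$ with $\chi_n$ a smooth character of conductor exactly $n$; this is admissible, but its dual has stalks at the $\chi_n$ requiring $n$ generators, so it cannot be a finitely generated $D(T_0)$-module. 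The paper's own proof is terse but explicit on this point: it literally assumes ``$Z$ \emph{and} $V$ are finite-dimensional,'' so it never claims finite generation of $V'$ from admissibility alone. Your proof should either import that same finite-dimensionality (or strong admissibility) hypothesis on $V$, or cite a stronger input --- the admissibility in the Schneider--Teitelbaum sense is simply not enough.

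Second, the concern you raise in your last paragraph about the diagonal $T_0$-action is legitimate --- the paper glosses over it --- but it is resolvable and you should write it down rather than leave it as a worry. Fix weight vectors $m_1,\ldots,m_r$ generating $M(Z)$ over $U(\frg)$, say $m_i$ of weight $\chi_i$. For each $i$, the $D(T_0)$-submodule of $M(Z)\otimes V'$ generated by $m_i\otimes V'$ is $m_i\otimes(\chi_i\cdot V')$, i.e.\ $V'$ with the $T_0$-action twisted by $\chi_i$. Twisting by a character is an equivalence, so if $V'$ is finitely generated over $D(T_0)$, so is each twist, with a possibly different finite generating set $\{v'_{i,1},\ldots,v'_{i,t_i}\}$. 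The finite family $\{m_i\otimes v'_{i,j}\}$ then generates $M(Z)\otimes V'$ over $D(\frg,T_0)$, because $U(\frg)$ moves the first tensor factor while fixing the second, and $D(T_0)$ then acts through the twisted module structure just described. With this in place, base change to $D(G_0)$ gives the required finite generating set of $X_{M,Z,V}$. So: the skeleton is fine, the twist computation should be made explicit, and the hypothesis on $V$ must be sharpened before the proof closes.
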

	
	\begin{proof}
		The Lie algebra representation $M$ is finitely generated. As $Z$ and $V$ are finite-dimensional the module $X({M,Z,V})=D(G_0) \otimes_{D(\frg,T_0)} M(Z)\otimes V'$ is finitely generated, as well.  
	\end{proof}

	\begin{prop}\label{ind-stradm}
		Let $M\in \cW_\kappa$, $Z\in \Rep^{alg}_K(P_{(1,n)}), V\in \Rep^\infty_K(T_0)$. Suppose that $V$ is admissible. Then
		the representation $\cF^G_\kappa(M,Z,V)$ is ind-admissible.
	\end{prop}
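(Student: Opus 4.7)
The plan is to restrict $\cF^G_\kappa(M,Z,V)$ to a compact open subgroup $C\subset G$ and invoke a Mackey-type decomposition of compact induction. Set $X':=X_{M,Z,V}'$, which is a strongly admissible $G_0$-representation by the preceding proposition. Since $G$ is second countable and $G_0\subset G$ is open, the double coset space $C\backslash G/G_0$ is countable; fix an enumeration $g_1,g_2,\ldots$ of representatives.

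First I would establish the topological identification
\begin{equation*}
c-\Ind^G_{G_0}(X')_{|C}\;\cong\;\bigoplus_{i\geq 1}\Ind^C_{C\cap g_iG_0g_i^{-1}}\bigl((X')^{g_i}\bigr),
\end{equation*}
where the right-hand side carries the locally convex direct sum topology and $(X')^{g_i}$ denotes $X'$ equipped with the action of $C\cap g_iG_0g_i^{-1}$ given by $h\cdot v=(g_i^{-1}hg_i)\cdot v$. This is nothing but a regrouping of the decomposition $c-\Ind^G_{G_0}(X')=\bigoplus_{g\in G/G_0}g\cdot X'$ recalled in Section~2 according to the $C$-orbits on $G/G_0$: each orbit through $g_iG_0$ is finite (as $C$ is compact and $g_iG_0g_i^{-1}\subset G$ is open) and corresponds bijectively to $C/(C\cap g_iG_0g_i^{-1})$.

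Second I would check that each summand is a strongly admissible $C$-representation. Conjugation by $g_i$ identifies $(X')^{g_i}$, as a representation of $C\cap g_iG_0g_i^{-1}$, with $X'$ as a representation of the compact open subgroup $g_i^{-1}Cg_i\cap G_0\subset G_0$, and strong admissibility of $X'$ restricts to any compact open subgroup. Since $[C:C\cap g_iG_0g_i^{-1}]<\infty$, the distribution algebra $D(C)$ is a finitely generated (in fact free of finite rank) right $D(C\cap g_iG_0g_i^{-1})$-module, hence the strong dual
\begin{equation*}
\Ind^C_{C\cap g_iG_0g_i^{-1}}\bigl((X')^{g_i}\bigr)_b'\;=\;D(C)\otimes_{D(C\cap g_iG_0g_i^{-1})}\bigl((X')^{g_i}\bigr)_b'
\end{equation*}
is a finitely generated $D(C)$-module, proving strong admissibility of the summand.

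Finally, set $V_n:=\bigoplus_{i=1}^n\Ind^C_{C\cap g_iG_0g_i^{-1}}((X')^{g_i})$. This is a finite direct sum of strongly admissible $C$-representations and therefore itself strongly admissible; the chain $V_1\subset V_2\subset\cdots$ exhausts $c-\Ind^G_{G_0}(X')_{|C}$. By the defining property of the locally convex direct sum topology on a countable direct sum, the inclusions $V_n\hookrightarrow V_{n+1}\hookrightarrow c-\Ind^G_{G_0}(X')_{|C}$ are topological embeddings with closed image, so $c-\Ind^G_{G_0}(X')_{|C}=\varinjlim_n V_n$ is a strict inductive limit, and $\cF^G_\kappa(M,Z,V)$ is ind-(strongly) admissible. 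The only real obstacle is justifying the Mackey decomposition at the topological level and confirming that the twisted action on $(X')^{g_i}$ remains locally analytic; both follow directly from the description of compact induction as the locally convex direct sum $\bigoplus_{g\in G/G_0}g\cdot X'$.
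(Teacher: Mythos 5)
Your proposal is correct and follows essentially the same route as the paper: you restrict the compact induction to a compact open subgroup, invoke the Mackey decomposition into a countable direct sum indexed by double cosets, show each summand (a finite induction of the strongly admissible $X'$, twisted by conjugation) is strongly admissible, and conclude via the fact that a countable locally convex direct sum is a strict inductive limit of its finite partial sums. The only cosmetic differences are that the paper specializes to $C=G_0$ and records the projection-formula identity $\Ind^{G_0}_{G_0\cap sG_0s^{-1}}X'\cong(\Ind^{G_0}_{G_0\cap sG_0s^{-1}}{\bf 1})\otimes X'$ to see finite generation, whereas you argue directly that $D(C)$ is finite free over $D(C\cap g_iG_0g_i^{-1})$.
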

	
	\begin{proof}
		By the proposition before the representation $X'= X({M,Z,V})'$ is strongly admissible.  Let $S\subset G$ be a set of representatives for the double cosets $G_0 \backslash G/G_0.$ 
		As for representations of finite groups \cite[Proposition22]{Se} we have  
		\begin{eqnarray*}
			c-\Ind^G_{G_0}(X')_{|G_0} & = & \bigoplus_{s \in S}  \Ind^{G_0}_{G_0 \cap  sG_0s^{-1}} sX' .
		\end{eqnarray*}
		Now $G$ is second countable hence $G_0\backslash G/G_0$ is countable. Since any locally convex sum is a strict inductive limit and $G_0 \cap  sG_0s^{-1}$ is of finite index in $G_0$ the claim follows.  
	\end{proof}

	\vspace{1cm}
	\section{Irreducibility}
	
	In general we cannot expect the representation $\cF_\mu(Z,V)$ to be topologically irreducible even if $Z$ and $V$ are irreducible due to the action of the centre $Z(G)$ of $G$. For this reason we consider the following variant of $\cF_\mu.$
	Suppose that $Z$ and $V$ are irreducible. Then the action of $Z(G)\cap T_0$ on $(M^\mu(Z)\otimes_K V')'$ is given by a locally analytic character $\eta_{\mu,Z,V}$. We fix a locally analytic character $\eta$ on $Z(G)$ which extends $\eta_{\mu,Z,V}.$ Then we set
	$$\overline{X}_\mu(Z,V):=D(Z(G)G_0) \otimes_{D(\frg,T_0)}  (M^\mu(Z)\otimes_K V')\otimes J_\eta'$$
	and
	$$\overline{\cF}_\mu(Z,V):={\rm c-\Ind}^G_{Z(G)G_0}(\overline{X}_\mu(Z,V)').$$

	We denote by $W$ the Weyl group of $G.$ For $n=1,$ the parabolic subgroup $P_{(1,n)}$ is just the Borel subgroup $B$ and any irreducible algebraic representation $Z$ of $B$ is given by an algebraic character $\lambda\in X^\ast(T)$ of the maximal torus $T.$ Thus we may write $\lambda=(\lambda_0,\lambda_1)$ with $\lambda_i\in \bbZ,i=0,1.$
	
	\begin{thm} \label{irred} Let $n=1$. Let $\mu \in K^2$ and let $\lambda \in \Rep^{alg}_K(T)$, $V\in \Rep^\infty_K(T_0)$   be simple objects. 
		Then  $\overline{X}_\mu(Z,V)'$ is topologically irreducible if $w(\mu)- \mu \not\in \bbZ^{2}$ for the non-trivial reflection $w \in W$, $|\mu_i|\notin |L|,i=0,1,$ $|\mu_0|\neq |\mu_1|$ and 
		
		a) $|\mu_0| \leq  1$ or $|\mu_1| \leq  1$ if $\lambda_0+\lambda_1=0$.
		
		b) $|\mu_0| \leq  1$ and  $|\mu_1| \geq  1$ or $|\mu_0| \geq  1$ and  $|\mu_1| \leq 1$ if $\lambda_0+\lambda_1\neq 0$.
	\end{thm}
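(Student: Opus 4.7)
The plan is to apply the locally analytic version of Mackey's irreducibility criterion, established in the appendix, to the compact-modulo-centre induction
$$\overline{\cF}_\mu(Z,V) \;=\; \text{c-}\Ind^G_{Z(G)G_0}(\sigma), \qquad \sigma \;:=\; \bigl(D(Z(G)G_0) \otimes_{D(\frg,T_0)} (M^\mu(Z) \otimes_K V') \otimes J_\eta'\bigr)'.$$
This criterion reduces topological irreducibility to two assertions: (i) $\sigma$ is topologically irreducible as a $Z(G)G_0$-representation; and (ii) for every representative $g$ of a non-trivial double coset in $Z(G)G_0 \backslash G / Z(G)G_0$ there is no nonzero continuous intertwiner $\sigma \to \sigma^g$ after restriction to $Z(G)G_0 \cap g Z(G)G_0 g^{-1}$.

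For (i), the hypothesis $\mu_i \notin \bbZ$ ensures that $M^\mu$ is a simple cuspidal weight $U(\frg)$-module, and since $Z$ and $V$ are simple the tensor product $M^\mu(Z) \otimes V'$ is a simple $D(\frg,T_0)$-module. Tensoring by the one-dimensional line $J_\eta'$ (which only normalises the central action) and inducing to $D(Z(G)G_0)$ yields a simple coadmissible module; by the anti-equivalence between coadmissible modules and locally analytic representations of compact type recalled in section 2, $\sigma$ is topologically irreducible.

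For (ii), I would parametrise the non-trivial double cosets. When $n=1$ the Cartan decomposition of $\GL_2(L)$ modulo the centre provides representatives by a lift $\dot w$ of the non-trivial Weyl reflection together with the diagonal elements $g_m = \diag(\varpi^m, 1)$, $m \geq 1$. In the Weyl case, transport of structure identifies $\sigma^{\dot w}$ with the analogous module built from $M^{w(\mu)}$, whose $T_0$-weights lie in $w(\mu) + \bbZ^2$; the hypothesis $w(\mu) - \mu \notin \bbZ^2$ forces these weight sets to be disjoint from those of $\sigma$, so no nonzero $T_0$-equivariant intertwiner can exist.

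The main obstacle will be to rule out intertwiners for the diagonal representatives $g_m$. Writing $H_m = Z(G)G_0 \cap g_m Z(G)G_0 g_m^{-1}$, any continuous $\phi : \sigma|_{H_m} \to \sigma^{g_m}|_{H_m}$ is determined by the image of a cyclic generator $v_0 \in M^\mu(Z) \otimes V'$. Imposing equivariance along the root subgroups via the explicit Lie algebra formulas $x_{\epsilon_0 - \epsilon_1} = t_0\, \partial/\partial t_1$ and $x_{\epsilon_1 - \epsilon_0} = t_1\, \partial/\partial t_0$, combined with the twist $\Ad(g_m)$ which multiplies these by $\varpi^{\pm m}$, forces the image $\phi(v_0)$ to be a formal power series $\sum_{n \geq 0} a_n X^n$ in a local coordinate $X$ on a unipotent neighbourhood, and yields a closed-form recurrence for the $a_n$ involving products of the shape $\prod_{k < n}(\mu_0 - k)(\mu_1 + k)$ weighted by powers of $\varpi$ that depend on $m$ and on $\lambda_0, \lambda_1$. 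The hypotheses $|\mu_i| \notin |L|$ (so no factor is a unit), $|\mu_0| \neq |\mu_1|$ (so the two root directions have distinct growth), and the case split (a)/(b) on the sizes $|\mu_i|$ — tuned precisely to the way $\lambda_0 + \lambda_1$ enters the leading term of the recurrence — together force $|a_n|$ to grow faster than any permissible geometric rate, so that the series cannot converge on any disc. Hence $\phi = 0$, and combining this with step (i) and the Weyl case via the Mackey criterion completes the proof. The restriction to $n=1$ is essential: in higher rank the recurrence becomes multivariate and controlling the convergence regions seems out of reach by the present method.
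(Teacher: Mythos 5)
The overall strategy — reduce to $Z={\bf 1}$, then apply the Mackey criterion from the appendix — matches the paper, and your instinct to control convergence of power series coefficients is exactly the right analytic tool. But there is a genuine misallocation of effort that leaves a large gap in step (i).

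Your step (i) asserts that because $M^\mu(Z)\otimes V'$ is a simple $D(\frg,T_0)$-module, "inducing to $D(Z(G)G_0)$ yields a simple coadmissible module". This is not automatic — inducing a simple module up along an algebra inclusion generally does not preserve simplicity — and it is in fact the entire hard content of the proof. The paper devotes Theorem \ref{irredH} and Theorem \ref{U_r_modules}, together with the reduction steps through $H_0$, $D_r(H)$ and $U_r(\frg)$, to proving precisely this. It is here, not in the Mackey intertwiner comparison, that every one of the hypotheses $w(\mu)-\mu\notin\bbZ^2$, $|\mu_i|\notin|L|$, $|\mu_0|\neq|\mu_1|$ and the case split a)/b) is used: one analyses a hypothetical $U_r(\frg)$-isomorphism $\phi:\delta_w\star\frm_r\to\delta_h\star\frm_r$ for $w$ a Weyl element and $h$ in the Iwahori subgroup, expands $\phi(v_\mu)=\sum_i c_i t^{\mu+i\alpha}$, and derives the divergence of this series from the explicit recurrences on $c_i$. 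That is exactly the recurrence analysis you describe — but you have placed it inside part (ii).

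Your step (ii) also misparametrises the double cosets. A lift $\dot w$ of the non-trivial Weyl reflection lies in $G_0$, so it is in the identity double coset of $Z(G)G_0\backslash G/Z(G)G_0$ and does not contribute an intertwiner to check; the weight disjointness argument using $w(\mu)-\mu\notin\bbZ^2$ belongs in the proof that $\sigma$ is irreducible over $Z(G)G_0$ (via the Bruhat decomposition of $G_0$), not in the cross-coset comparison. The actual cross-coset check in the paper is short: by the Cartan decomposition one takes $g=\diag(1,t_1)$ with $|t_1|<1$, observes that conjugation scales $e^n v_\mu$ by $t_1^n$ and $f^n v_\mu$ by $t_1^{-n}$, so any candidate isomorphism would strictly increase the norm of a generic power series in $\frm_r$ and force divergence — no hypotheses on $\mu$ are needed there. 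In short: the Mackey intertwiner step is easy, the simplicity of $\sigma$ is where the theorem lives, and your proposal leaves that part as an unproved assertion while sketching the correct convergence analysis in the wrong place.
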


	\begin{rmk}
		It is tempting to generalize the above statement to arbitrary $n$ in form of  a conjecture. At least the condition 
		$w(\mu)- \mu \not\in \bbZ^{n+1}$ for all $w \in W, w\neq 1$
		seems to be necessary as we see from the proof below.
	\end{rmk}

	\begin{proof}
		We abbreviate $M^\mu$ by $M.$  If $\lambda_0+ \lambda_1=0$ then $\lambda$  is absorbed by  $J[t_0^{\pm 1}, t_1^{\pm 1}]$ and hence $M^\mu=M^\mu(Z)$ so that we can assume that $Z$ is the trivial representation. In general, the character $\mu':=\mu+\lambda$  satisfies again the assumptions $w(\mu')-\mu' \not\in \bbZ^2.$ Suppose that $\mu_0<1$ and $\mu_1>1$ Then $|\mu_0'|\geq  1$ and $|\mu_1'|=|\mu_1|.$ By adding the character $t_0^{-m_0}t_1^{m_0}\in J[t_0^{\pm 1}, t_1^{\pm 1}]$ to $\mu'$ we get a new character $\mu^{''}$ with $\mu_0=\mu^{''}_0$ and that the other conditions are still satisfied. So the upshot is that we may assume that $Z={\bf 1}.$ The case $|\mu_0|>1$ and $|\mu_1|<1$ is symmetric.

		As for $V$ it is given by a smooth character $\chi:T_0 \to J^\times.$ It is enough to check  that $X$ is a simple $D(G_0)$-module. Here we mimic the proof of \cite[Thm 5.3]{OS2} dealing with principal series representations and use the notation there.  
		The proof is at the beginning verbatim the same as in loc.cit.  by replacing $P_0$ by $T_0$ and with the obvious (notational) changes. Here we consider as mentioned above arbitrary $n$ and specialize later in the proof to $n=1.$ We go through the proof of loc.cit. and recollect the main constructions in our modified version. But there is one difference. In contrast to loc.cit. we include the smooth  character in the proof and do not handle it separately since it induces only a twist of the objects therein. Thus we denote by $M(\chi)=M\otimes \chi'$ the $D(\frg,T_0)$-module with the trivial $\frg$-action on $\chi$ and the tensor product action of $T_0$. 
		
		We consider thus for an arbitrary open normal subgroup $H$  of $G_0$ the decomposition $$X=\bigoplus_{g\in G_0/H} \delta_g \star \left(D(HT_0) \otimes_{D(\frg,T_0)} M(\chi) \right).$$ Here we recall  that for a $D(H)$-module $N$ and $g \in G_0,$ we denote by $\delta_g \star N$ the space $N$, equipped with the structure of a $D(H)$-module given by $\delta \cdot_g n = (\delta_{g^{-1}} \delta \delta_g)n$, where $n \in N$, $\delta \in D(H)$, and the product $\delta_{g^{-1}} \delta \delta_g \in D(G_0)$ is contained in $D(H)$.
		
		We arrive at the following modified version of Thm. 5.5  loc.cit. in a reduced version which proves our theorem.
		
		\begin{thm}\label{irredH}  Let $H$ be an open normal subgroup of $G_0$, and let $g, g_1, g_2 \in G_0$. Then
			
			(i) The $D(H)$-module $\delta_g \star \left(D(HT_0) \otimes_{D(\frg,T_0)} M(\chi)\right)$ is simple.
			
			(ii) The $D(H)$-modules $\delta_{g_1} \star \left(D(HT_0) \otimes_{D(\frg,T_0)} M(\chi)\right)$ and $\delta_{g_2} \star \left(D(HT_0) \otimes_{D(\frg,T_0)} M(\chi)\right)$ are isomorphic if and only if $g_1 HT_0 = g_2 HT_0$.
		\end{thm}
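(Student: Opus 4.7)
My approach follows the strategy of \cite[Thm.~5.5]{OS2} from the principal series case, with the Verma modules there replaced by the cuspidal module $M = M^\mu$. The essential new ingredient is that cuspidality provides a bijective action of every root vector $x_\alpha$ on $M$, which compensates for the absence of a highest weight on which one would normally base an inductive argument. The smooth character $\chi$ is carried along throughout since it merely twists the $T_0$-action.

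First I would shrink $H$ if necessary so that $H$ admits an Iwahori-type decomposition $H = (U_B^- \cap H)(T_0 \cap H)(U_B \cap H)$, yielding a $J$-vector space isomorphism
\[
D(HT_0) \otimes_{D(\frg,T_0)} M(\chi) \;\cong\; D(U_B^- \cap H) \,\widehat{\otimes}_J\, D(U_B \cap H) \,\widehat{\otimes}_J\, M(\chi),
\]
on which $D(H)$ acts by left multiplication on the two unipotent factors together with the transported Lie-algebra and torus actions. For (i), let $N$ be a nonzero closed $D(H)$-submodule of $\delta_g \star (D(HT_0) \otimes_{D(\frg,T_0)} M(\chi))$. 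Using the one-dimensional weight decomposition of $M^\mu$, the diagonal $T_0$-action, and the hypothesis $w(\mu)-\mu \notin \bbZ^{2}$ to separate locally analytic characters of $T_0$, I would isolate a nonzero element of the form $1 \otimes 1 \otimes m_0$ inside $N$ for a specific weight vector $m_0 \in M(\chi)$.

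The crux is then to show that the $D(H)$-orbit of $1 \otimes 1 \otimes m_0$ exhausts the whole module. Multiplication by $D(U_B^\pm \cap H)$ produces all Dirac masses in the unipotent directions, and combined with the bijective actions of the two root operators on $M$ this reduces matters to a formal power series convergence problem in one variable, exactly as outlined in the introduction for Theorem~B. The conditions $|\mu_i|\notin|L|$, $|\mu_0|\neq|\mu_1|$ together with the size assumptions (a), (b) on $|\mu_0|, |\mu_1|$ relative to $1$ force the $p$-adic valuations of the coefficients of this series to grow at a prescribed rate, so that any closed submodule containing $1 \otimes 1 \otimes m_0$ must sweep out every weight component. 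This convergence analysis is the main obstacle and is the sole reason the result is restricted to $n=1$: in higher rank the multivariable analogue of the growth estimate is not accessible by the same explicit coefficient analysis.

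For (ii), a nonzero $D(H)$-linear map between the two summands must intertwine the actions of $T_0 \cap H$ and of the root operators after conjugation by $g_1 g_2^{-1}$. Comparing the collections of locally analytic $T_0$-characters that occur on each side via the weight decomposition of $M(\chi)$, and invoking once more $w(\mu)-\mu \notin \bbZ^{2}$ to rule out Weyl-group coincidences, forces $g_1 g_2^{-1} \in HT_0$, hence $g_1 HT_0 = g_2 HT_0$. The converse direction is immediate from the definition of the twist $\delta_g \star (-)$, since inner twisting by an element of $HT_0$ visibly yields an isomorphic $D(H)$-module.
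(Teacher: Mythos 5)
The ingredients you list are the right ones, but the logical architecture does not match the paper's, and two essential technical steps are omitted.

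First, the paper does not prove simplicity by first isolating a weight vector and then "sweeping out" via a power-series argument. Its actual route is: (a) reduce to a uniform pro-$p$ normal subgroup $H_0$; (b) pass to the Banach completions $D_r(H)$ and $U_r(\frg)$, which is where the norm $\|\cdot\|^{(m)}_s$ lives and where all estimates are carried out; (c) cite existing results ([F, 1.3.12] / [OS1, 3.4.8]) to conclude that the completion $\frm_r(\chi) = U_r(\frg)M(\chi)$ is a simple $U_r(\frg)$-module — no direct argument needed here; (d) decompose $\bM_r(\chi) = \bigoplus_{g\in HT_0/T_{0,r}} \delta_g\,\frm_r(\chi)$ and prove (this is Theorem \ref{U_r_modules}) that the simple summands $\delta_{g_1}\star\frm_r$ and $\delta_{g_2}\star\frm_r$ are pairwise non-isomorphic whenever $g_1 T_{0,r}\neq g_2 T_{0,r}$; then (e) conclude both (i) and (ii) by a Clifford-theoretic argument from the transitive $H$-action on this multiplicity-free semisimple decomposition. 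Your plan skips (a)–(c) entirely, and the "sweep out" step you describe is actually trivial once one knows $\frm_r$ is simple — the real content is the pairwise non-isomorphism, not the orbit generation.

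Second, and consequently, you have attached the hard analytic estimate to the wrong step. The formal power series convergence argument (exploiting $|\mu_i|\notin|L|$, $|\mu_0|\neq|\mu_1|$, and the conditions (a), (b)) occurs entirely inside the proof of the pairwise non-isomorphism of the twisted simple modules $\delta_g\star\frm_r$, i.e., in part (ii), not in your step of exhausting the module from a single weight vector. Symmetrically, your account of part (ii) is incomplete: comparing locally analytic $T_0$-characters and using $w(\mu)-\mu\notin\bbZ^2$ only rules out coincidences across different Weyl cells. It does not decide the situation when $g_1g_2^{-1}$ lies in the Iwahori subgroup, where the weight pattern on both sides is identical; that is precisely where the explicit coefficient/convergence analysis enters. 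As written, your argument for (ii) would never invoke the size hypotheses on $|\mu_i|$ at all, which is a sign it cannot be complete. Without the passage to Banach completions and without the correct placement of the power-series analysis inside the non-isomorphism step, the proposal does not close.
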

		
		\noindent \Pf The proof passes through several reduction steps. The first step is to reduce to the case of a suitable subgroup $H_0 \sub H$ which is normal in $G_0$ and uniform pro-$p$.  
		
		{\it Step 1: reduction to $H_0$.}  Let $\Lie(\bG_0)$, $\Lie(\bT_0)$ etc.  be the Lie algebras of $G_0$, $T_0$ etc. These are $O_L$-lattices in $\frg = \Lie(G)$, $\frt = \Lie(T)$ etc. respectively. Moreover, $\Lie(\bG_0)$, $\Lie(\bT_0)$ etc.  are $\Zp$-Lie algebras, and we have as usual a decomposition $\Lie(\bG_0)= \Lie(\bU_{B,0}) \oplus \Lie(\bT_0) \oplus  \Lie(\bU^-_{B,0}). $
		
		\noindent For $m_0 \ge 1$ ($m_0 \ge 2$ if $p=2$) the $O_L$-lattices $p^{m_0} \Lie(\bG_0)$, $p^{m_0} \Lie(\bT_0)$ and $p^{m_0} \Lie(\bU_{\bP,0})$ are powerful $\Zp$-Lie algebras, cf. \cite[sec. 9.4]{DDMS}, and hence
		$\exp_G: \frg \dashrightarrow G$ converges on these $O_L$-lattices. We set 
		\begin{numequation}\label{H_0}
			H_0 = \exp_G\Big(p^{m_0} \Lie(\bG_0)\Big) \;, \;\; H_0^0 = \exp_G\Big(p^{m_0} \Lie(\bT_0)\Big), 
		\end{numequation}
		\begin{equation*} H_0^+ = \exp_G\Big(p^{m_0} \Lie(\bU_{B,0})\Big), H_0^- = \exp_G\Big(p^{m_0} \Lie(\bU_{B,0}^-)\Big),  
		\end{equation*}
		which are uniform pro-$p$ groups. Moreover $H_0$ is normal in $G_0$ and  $H_0^0$ in $T_0$, respectively. 
		We choose   $m_0$ large enough such that $H_0$ is contained $H$.
		With the same argument as in \cite{OS2} we may assume from now on that $H_0=H.$
		
		{\it Step 2: passage to $D_r(H)$.} We put
		$$\kappa = \left\{\begin{array}{lcl} 1 & , & p>2 \\
			2 & , & p=2 \end{array} \right.$$
		and denote by $r$ a real number in $(0,1) \cap p^\bbQ$ with the property that there is $m \in \bbZ_{\ge 0}$ such that $s = r^{p^m}$ satisfies\footnote{This assumption seems to be also necessary in \cite{OS2}}
		\begin{numequation}\label{r_and_s}
			\max\{\frac{1}{p},|\pi|p^{-1/(p-1)} \} < s \mbox{ and } s^{\kappa} < p^{-1/(p-1)} 
		\end{numequation}
		where $\pi\in O_L$ is an uniformizer.
		We let $\|\cdot\|_r$ denote the norm on $D(H)$ associated to the canonical $p$-valuation. Here $D_r(H)$ is the corresponding Banach space completion. This is a noetherian Banach algebra, and $D(H) = \varprojlim_{r <1} D_r(H)$. Let $\bM(\chi):=D(HT_0)\otimes_{D(\frg,T_0)}M(\chi).$ As explained in loc.cit. it is enough to show that
		$$\bM_r(\chi) := D_r(H) \otimes_{D(H)} \bM(\chi) = D_r(HT_0) \otimes_{D(\frg,T_0)} M(\chi)$$
		are simple $D_r(H)$-modules for a sequence of $r$'s converging to $1$. From now on we assume that, in addition to (\ref{r_and_s}), that $r$ is chosen such that $\bM_r(\chi) \neq 0$, and we consider $M(\chi)$ as being contained in $\bM_r(\chi)$.
		
		{\it Step 3: passage to $U_r(\frg)$.} Let $U_r(\frg) = U_r(\frg,H)$ be the topological closure of $U(\frg)$ in $D_r(H)$. Then
		\begin{numequation}\label{density1}
			\hskip-50pt \mbox{$U(\frg)$ is dense in $D_r(H)$ if $r^{\kappa} < p^{-\frac{1}{p-1}}$ and $U_r(\frg) = D_r(H)$.}
		\end{numequation}
		Let $(P_m(H))_{m \ge 1}$ be the lower $p$-series of $H$, cf. \cite[1.15]{DDMS}. Note that $P_1(H) = H$. For $m \ge 0$ put $H^m : = P_{m+1}(H)$ so that $H^0 = H$. Then $H^m$ is a uniform pro-$p$ group with $\Zp$-Lie algebra equal to $p^m\Lie_\Zp(H)$. 
		Thus our subgroups above have the following description.
		Let $\frg_\bbZ$ be a $\bbZ$-form of $\frg$, i.e. $\frg_\bbZ \otimes_\bbZ L = \frg$. We fix a Chevalley basis $(x_\gamma,y_\gamma,h_\alpha \midc \gamma \in \Phi^+, \alpha \in \Delta)$ of $[\frg_\bbZ,\frg_\bbZ]$. We have $x_\gamma \in \frg_\gamma$, $y_\gamma \in \frg_{-\gamma}$, and $h_\alpha = [x_\alpha,y_\alpha] \in \frt$, for $\alpha \in \Delta$. Then
		$$\Lie_\Zp(H^-) = p^{m_0}\Lie(\bU^-_{\bB,0}) = \bigoplus_{\beta \in \Phi^+} O_L y_\beta^{(0)} \;,$$
		where $y_\beta^{(0)} = p^{m_0}y_\beta$. Moreover, the $\Zp$-Lie algebra of $H^{-,m}$ is
		$$\Lie_\Zp(H^{-,m}) = p^m\Lie(H^-) = \bigoplus_{\beta \in \Phi^+} O_L y_\beta^{(m)} \,,$$
		where $y_\beta^{(m)} = p^{m_0+m}y_\beta$.
		In the same way
		$$\Lie_\Zp(H^{+,m}) = p^m\Lie(H^+) = \bigoplus_{\beta \in \Phi^+} O_L x_\beta^{(m)} \,,$$
		where $x_\beta^{(m)} = p^{m_0+m}x_\beta$.

		Let $s = r^{p^m}$ be as in (\ref{r_and_s}). Denote by $\|\cdot\|_s^{(m)}$ the norm on $D(H^m)$ induced by the canonical $p$-valuation on $H^m$. Then, by \cite[6.2, 6.4]{Sch}, the restriction of $\|\cdot\|_r$ on $D(H)$ to $D(H^m)$ is equivalent to $\|\cdot\|_s^{(m)}$, and $D_r(H)$ is a finite and free $D_s(H^m)$-module on a basis any set of coset representatives for $H/H^m$. By (\ref{density1}) we can conclude 
		\begin{numequation}\label{density2}
			\mbox{If $s = r^{p^m}$ is as in (\ref{r_and_s}), then $U(\frg)$ is dense in $D_s(H^m)$, hence $U_r(\frg,H) = D_s(H^m)$.}
		\end{numequation}
		In particular, $U_r(\frg) \cap H = H^m$ is an open subgroup of $H$. Let
		$$\frm_r(\chi) := U_r(\frg)M(\chi)$$
		\noindent be the $U_r(\frg)$-submodule of $\bM_r(\chi)$ generated by $M(\chi)$. The module $M(\chi)$ is dense in $\frm_r(\chi)$ with respect to this topology. It follows from \cite[1.3.12]{F}
		or \cite[3.4.8]{OS1} that $\frm_r(\chi)$ is a simple $U_r(\frg)$-module, and in particular a simple $D_r(\frg,T_0)$-module. Thus for every $g \in G_0$ the $U_r(\frg)$-module $\delta_g \star \frm_r(\chi)$ (which is defined as above by composing the natural action with conjugation with $g$) is simple. As in \cite{OS2} we have for $T_{0,r}:= HT_0 \cap D_r(\frg,T_0)$ 
		
		\medskip
		(i) $T_{0,r} = H^mT_0=H^{+,m}T_0 H^{-,m}$, where $H^{-,m} = P_m(H^-)$ and $H^{+,m} = P_m(H^+)$. 
		
		(ii) $D_r(HT_0) =  \bigoplus_{g \in HT_0/T_{0,r}} \delta_g D_r(\frg,T_0)$.
		
		\medskip
		By (ii) we have  $\frm_r(\chi) = D_r(\frg,T_0) \otimes_{D(\frg,T_0)} M(\chi) \sub D_r(HT_0) \otimes_{D(\frg,T_0)} M(\chi) = \bM_r(\chi)$ and 
		\begin{numequation}\label{decomp}
			\bM_r(\chi) = D_r(HT_0) \otimes_{D(\frg,T_0)} M(\chi) = D_r(HT_0) \otimes_{D_r(\frg,T_0)} \frm_r(\chi) = \bigoplus_{g \in HT_0/T_{0,r}} \delta_g \frm_r(\chi) \,.
		\end{numequation}
		Here the action of $U_r(\frg)$ on  $\delta_g \frm_r(\chi)$ is the same as on $\delta_g \star \frm_r(\chi).$
		Thus it suffices to show the theorem below.
		Here we only consider $\frm_r$ as a module over $U_r(\frg)$. This suffices to prove Theorem \ref{irredH}.  Therefore we omit $\chi$ for the remainder of this section.
		
		\begin{thm}\label{U_r_modules} Assume that all the  above assumptions are satisfied. Then
			for any $g_1, g_2 \in G_0$ with $g_1T_{0,r} \neq g_2T_{0,r}$ the $U_r(\frg)$-modules $\delta_{g_1} \star \frm_r$ and $\delta_{g_2} \star \frm_r$ are not isomorphic.
		\end{thm}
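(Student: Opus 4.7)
The plan is to follow the strategy of the principal series analogue in \cite[Thm.~5.5]{OS2}, using invariants of the $U_r(\frg)$-module structure to detect the coset $gT_{0,r}$. The distinguishing invariant of choice will be the spectrum of the Cartan $\Ad(g^{-1})\frt$ acting on $\delta_g \star \frm_r$, which is controlled by the $\frt$-weights of the algebraic submodule $M^\mu \otimes \chi' \subset \frm_r$.

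First I would reduce to the case $g_1 = 1$: a $U_r(\frg)$-isomorphism $\phi \colon \delta_{g_1}\star \frm_r \xrightarrow{\sim} \delta_{g_2}\star\frm_r$ yields, after twisting by $\delta_{g_1^{-1}}$, an isomorphism $\delta_g \star \frm_r \cong \frm_r$ with $g = g_1^{-1}g_2$, so the goal reduces to showing that this forces $g \in T_{0,r}$. Next I would parametrize $G_0/T_{0,r}$ using the Iwahori decomposition of $G_0$ and the factorization $T_{0,r} = H^{+,m} T_0 H^{-,m}$; a complete set of representatives can be chosen inside $U_{B,0}^- \cdot \{1,w\} \cdot U_{B,0}$ modulo the relevant congruence subgroups, thereby splitting the analysis into the identity Bruhat cell and the cell indexed by the non-trivial Weyl element $w$.

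The core step is the weight analysis on $\frm_r$. The algebraic submodule $M^\mu \otimes \chi' \subset \frm_r$ decomposes under $\frt$ into weight spaces indexed by the affine lattice $\mu + \bbZ(\epsilon_0 - \epsilon_1)$ (each of multiplicity $\dim V$), and the hypotheses $|\mu_i|\notin |L|$ and $|\mu_0|\neq|\mu_1|$ guarantee that the associated locally analytic characters of $T_0$ are pairwise distinct and extend genuinely to $\frm_r$. For $g = w$, the twist $\Ad(w^{-1})\frt = \frt$ acts on $\delta_w \star \frm_r$ through the weight set $w(\mu) + \bbZ(\epsilon_0 - \epsilon_1)$; the hypothesis $w(\mu) - \mu \notin \bbZ^2$ makes this disjoint from the original weight set, immediately ruling out an isomorphism with $\frm_r$. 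For $g$ in the identity Bruhat cell but outside $T_{0,r}$, the non-trivial unipotent factors of $g$ deform $\Ad(g^{-1})\frt$ away from $\frt$, and I would compare the spectra by a perturbation argument exploiting the bijectivity of the root vectors on the cuspidal module $M^\mu$ together with the norm bounds imposed by cases (a) and (b) on $|\mu_i|$.

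The main obstacle will be this last case, where the twisted Cartan is genuinely different from $\frt$ and the $\frt$-weight decomposition does not transport directly to $\delta_g \star \frm_r$. One must ensure that the $r$-Banach completion $\frm_r$ contains no spurious vectors on which $\Ad(g^{-1})\frt$ acts with weights accidentally matching those of $\frt$, which is precisely where the norm conditions $|\mu_i|\leq 1$ or $|\mu_i|\geq 1$ enter, and where the case distinction according to $\lambda_0 + \lambda_1 = 0$ versus $\lambda_0 + \lambda_1 \neq 0$ becomes necessary: the algebraic twist $\lambda$ of $Z$ contributes an additional shift which, depending on whether it can be absorbed into $\mu$ by $t^{-m}_0 t^m_1$-scaling, requires either a symmetric or an asymmetric bound on the pair $(|\mu_0|,|\mu_1|)$ to preserve the perturbation estimate.
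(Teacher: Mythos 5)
Your high-level scaffolding matches the paper: reduce to $g_1 = 1$, decompose $g$ via Bruhat/Iwahori, and dispose of the non-trivial Weyl cell by comparing $\frt$-weights and invoking $w(\mu) - \mu \notin \bbZ^2$. That part is sound and is essentially what the paper does (the paper actually writes $g = k_1^{-1}wk_2$ and shows the weight lattice of $\delta_h \star M$ with $h \in I$ is unchanged, which is a slightly more careful version of your $g = w$ argument, but the idea is the same).

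The gap is in the identity Bruhat cell. Your plan to ``compare the spectra by a perturbation argument exploiting the bijectivity of the root vectors'' does not identify the actual mechanism and, as stated, would not lead to a proof. The paper does \emph{not} run a spectral comparison. Instead it assumes an isomorphism $\phi \colon \frm_r \xrightarrow{\sim} \delta_{u^-u^+}\star\frm_r$ exists, writes $v = \phi(v_\mu) = \sum_{i\in\bbZ} c_i t^{\mu+i\alpha}$, and derives from the $\frt$-intertwining relation the exact operator identity
\[
\mu(x)v + \alpha(x)\,a\,(b z + e)\,v \;=\; x v + \alpha(x)\,(b^2a - b)\,f v,
\]
which, weight by weight, yields a two-term (or one-term, in the sub-cases $a=0$ or $b=0$) linear recursion for the coefficients $c_i$. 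The conditions $|\mu_i| \notin |L|$, $|\mu_0| \neq |\mu_1|$, and the bounds $|\mu_i|\le 1$ or $\ge 1$ are then used purely as \emph{norm estimates on this recursion}, showing that the corresponding rescaled coefficients $d_i$ (written in the $(y^{(m)})^i$, $(x^{(m)})^i$ basis) do not tend to $0$ in the $\|\cdot\|_s^{(m)}$-norm when $u^\pm \notin T_{0,r}$, so the series diverges and no such $v$ can exist in $\frm_r$. Your ``spurious vectors'' framing misses that the argument is about forcing $\phi(v_\mu)$ into a specific formal expansion and then disproving its convergence, not about the existence or absence of weight vectors in an abstract spectral sense. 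Moreover, the $\lambda_0 + \lambda_1 = 0$ vs.\ $\neq 0$ dichotomy is absorbed into $\mu$ before Theorem~\ref{U_r_modules} is even stated (the ambient proof reduces to $Z = \mathbf 1$ at the outset), so it plays no further role at this level. Without the explicit recursion and divergence analysis, the core of the theorem is unproved.
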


		\noindent \Pf We start with the following observation.
		By our assumption on $r$ and $s$ we have $U_r(\fru_{B}^-) = U_r(\fru^-_{B},H^-) = D_s(H^{-,m})$. Elements in $U_r(\fru^-_{B})$ thus have a description as power series in $(y^{(m)}_\beta)_{\beta \in \Phi^+}$:
		$$U_r(\fru_B^-) = \left\{\sum_{n = (n_\beta)} d_n  (y^{(m)})^n \midc \lim_{|n| \ra \infty} |d_n|s^{\kappa|n|} = 0 \right\} \,,$$
		where $(y^{(m)})^n$ is the product of the $(y^{(m)}_\beta)^{n_\beta}$ over all $\beta \in \Phi^+$, taken in some fixed order.
		Let $\|\cdot\|_s^{(m)}$ be the norm on $D_s(H^{-,m})$ induced by the canonical $p$-valuation on $H^{-,m}$. Then we have for any generator $y^{(m)}_\beta$
		\begin{numequation}\label{norm_generator}
			\| y^{(m)}_\beta\|_s^{(m)} = s^\kappa \;,
		\end{numequation}
		By symmetry the discussion above holds true for the group $H^{+,m}.$ Thus we may write
		\begin{align*}\frm_r & = & \\ & & \left\{\sum_{n = (n_\beta)} d_n  (y^{(m)})^n v_\mu + \sum_{n = (n_\beta)} c_n  (x^{(m)})^nv_\mu\midc \lim_{|n| \ra \infty} \|d_n (y^{(m)})^n\|_s^{(m)} = 0, \lim_{|n| \ra \infty} \|c_n (x^{(m)})^n\|_s^{(m)} = 0 \right\} \, 
		\end{align*}
		where $v_\mu\in M$ is a fixed generator of weight $\mu$.

		So let $\phi: \delta_{g} \star \frm_r \stackrel{\simeq}{\lra} \delta_{h} \star \frm_r$ be an isomorphism of $U_r(\frg)$-modules. This induces  a $U_r(\frg)$-module isomorphism $\delta_{h^{-1}g} \star \frm_r \stackrel{\simeq}{\lra} \frm_r$, so that we may assume $h = 1$. Let $I \sub G_0$ be the Iwahori subgroup.  Using the Bruhat decomposition 
		$$G_0 = \coprod_{w \in W} IwI$$
		we may write $g = k_1^{-1}wk_2$ with $k_1,k_2 \in I$, $w \in W$. By shifting $k_1$ from the left to the right and replacing $\frm_r$ by $\delta_{k_2^{-1}} \frm_r$ we get an isomorphism
		$$\phi: \delta_w \star \frm_r \stackrel{\simeq}{\lra} \delta_h \star \frm_r $$
		with $h \in I.$ 

		In the following, we write $x\cdot_g v$ for the action of $x\in \frg$ on a vector $v$ with respect to  the representation $\delta_g\star \frm_r.$  
		Now let $v_\mu \in M_\mu, v_\mu \neq 0.$  Then $v=\phi(v_\mu)$ is of weight $w(\mu)$ since $x\cdot_h v=\phi(x\cdot_w v_\mu)=\phi((w\mu(x))\cdot v_\mu)=(w\mu)(x)v.$
		But the weight vectors in $M^\mu$ are contained in $\mu +\bbZ^{n+1}$  and the weights for 
		$\delta_h\star M$ are the same as for $M$ i.e. $\mu+\bbZ^{n+1}.$
		Indeed write  $h=u^+ \cdot t\cdot  u^-$ as a product of an upper triangular unipotent element, an element $t\in T_0$ and a lower unipotent element. Here we may suppose that $t=1$ since it does not affect the weights.
		But $Ad(u^+)(x)$ has the shape $x+\sum_{\alpha>0} l_\alpha x_\alpha$ and $Ad(u^-)(x)$ has the shape $x+\sum_{\alpha<0} l_\alpha x_\alpha.$  Further any element $v \in M$ has the shape $v_\mu + \sum_\alpha v_{\mu+\alpha}.$ Now the claim follows by considering the products $x_\alpha v_\beta, \alpha,\beta \in \Phi$.   
		Since by assumption $w(\mu)- \mu \not\in \bbZ^{n+1}$ there cannot exist such an isomorphism $\phi.$
		
		\begin{rmk}
			Up to now the proof works for arbitrary $n.$
		\end{rmk}
		
		From now on $n$ is equal to $1.$ So we may suppose that $w=1$ and there is an isomorphism
		$$\phi: \frm_r \stackrel{\simeq}{\lra} \delta_h \star \frm_r \,$$
		with $h\in I.$ By the Iwahori decomposition
		$$I = (I \cap {U_{B}^-}) \cdot (I \cap B)$$
		we may write $h=u^-tu^+$ with $u^- \in  I \cap {U_{B}^-}, u^+ \in  I \cap {U_{B}^+}$ and $t \in I \cap {T_0} $. Since $t$ normalizes  the other groups and $\delta_t\star \frm_r=\frm_r$ we may assume that $t=1.$ Again we have to show that there cannot be such an isomorphism $\phi.$
		For this let again $v=\phi(v_\mu)$ with $v_\mu=t^\mu.$ Write
		\begin{numequation}\label{seriesv}
			v=\sum_{i\in \bbZ} c_i t^{\mu +i \alpha}
		\end{numequation}
		where $\alpha$ is the unique positive simple root and $c_i\in J.$ We shall prove that this formal series does not converge if $h\not\in T_{0,r}$. For doing so, we rewrite this series also in the shape
		\begin{numequation}\label{seriesvmitd}
			v=\sum_{i<0} d_i y_\alpha^{(i)} t^{\mu } + \sum_{i\geq 0} d_i x_\alpha^{(i)} t^{\mu }
		\end{numequation}
		since by (\ref{norm_generator}) this allows us to decide easier whether the power series converges.
		Then it follows that 
		$$c_i=\left\{\begin{array}{cc}
			\mu_0(\mu_0-1)\cdots (\mu_0+i+1)p^{-i(m_0+m)}d_i & i<0 \\ d_0 & i=0 \\ \mu_1(\mu_1-1)\cdots (\mu_1-i+1)p^{i(m_0+m)}d_i  & i>0 .
		\end{array} \right.$$
		As above we get by shifting the isomorphism
		$\delta_{u^+} \star \frm_r \cong \delta_{u^-} \star \frm_r$ and
		for all $v \in \frm_r , x\in \frg,$ the identity 
		$$\phi(Ad((u^{+})^{-1})(x)v)  = Ad((u^{-})^{-1})(x)\phi(v)
		.$$ 
		We shall examine what this means on the weight spaces and for the coefficients $c_i$, respectively. Thus we consider the identities
		$$\phi((Ad((u^{+})^{-1})(x)v_\mu))_\lambda  = (Ad((u^{-})^{-1})(x)\phi(v_\mu))_\lambda$$ where $\lambda \in \mu + \bbZ^{2}, x\in \frt$ and where $(\;)_\lambda$ indicates the $\lambda$ weight space. 
		
		Let $$e=\left( \begin{array}{cc}
			0 & 1 \\0 & 0
		\end{array} \right)
		,f=\left( \begin{array}{cc}
			0 & 0 \\1 & 0
		\end{array} \right), z=\left( \begin{array}{cc}
			1 & 0 \\0 & -1
		\end{array} \right)$$ so that $u^+=E_2+a e$ and $u^-=E_2+b f$ for some $a,b \in L.$ Note that we have
		$$Ad((u^{+})^{-1})(x)=x + \alpha (x) ae $$ and 
		$$Ad((u^{-})^{-1})(x)=x -  \alpha (x) bf . $$
		Hence we get
		\begin{eqnarray*}
			Ad((u^{-})^{-1})(x)\phi(v_\mu) & = & \phi(Ad((u^{+})^{-1})(x)v_\mu)=\phi((x + \alpha (x) ae)v_\mu)= \phi(xv_\mu+ \alpha(x)ae v_\mu)\\ & = &\mu(x)v+ \alpha(x)a\phi(ev_\mu).
		\end{eqnarray*}Since one checks that 
		$Ad((u^{+})^{-1})(e)=e$ the latter identifies with
		$$\mu(x)v+ \alpha(x) aAd((u^{-})^{-1})(e)v.$$
		But $Ad((u^{-})^{-1})(e)= -b^2f + bz + e$. Hence the latter expression coincides with
		$$\mu(x)v+ \alpha(x) a (-b^2f + bz + e)v.$$
		On the other hand,
		$Ad((u^{-})^{-1})(x)\phi(v_\mu)=(x -\alpha(x) b f)v=xv-\alpha(x)bfv.$ Thus we get
		\begin{numequation}\label{equationmu}
			\mu(x)v+ \alpha(x) a (bz + e)v = xv + \alpha(x) (b^2a-b)fv.
		\end{numequation}
		for all $x\in \frt.$
		
		Next we compare the weights in the expression above.
		We have for $\lambda=(\lambda_0,\lambda_1)\in J^2$ 
		$$ e t^\lambda = \lambda_1 t^{\lambda+\alpha}, f t^\lambda = \lambda_0 t^{\lambda- \alpha} $$
		and 
		$$ z t^\lambda = (\lambda_1-\lambda_0) t^{\lambda} .$$ 
		
		\vskip10pt
		\noindent {\bf First case:} $b=0, a\neq 0.$  Then the equation (\ref{equationmu}) is just
		$$ \mu(x)v+ \alpha(x) aev = xv .$$
		For $\lambda=\mu$ we get 
		$$ \mu(x)c_0t^\mu+ \alpha(x)a  c_{-1}(\mu_1+1)t^\mu = \mu(x)c_0t^\mu.$$
		Hence $c_{-1}a(\mu_1+1) = 0$. It follows that $c_{-1}=0.$
		
		\noindent For $\lambda=\mu-\alpha$ we get 
		$$ \mu(x)c_{-1}t^{\mu-\alpha}+ \alpha(x)a c_{-2}(\mu_1+2)t^{\mu-\alpha} = \mu(x)c_{-1}t^{\mu-\alpha}.$$ By the step before it follows that  $c_{-2}a(\mu_1+2) =0$ and thus  $c_{-2}=0.$
		Successively we see that $c_{-i}=0$ for all $i\in \bbN.$

		\noindent For $\lambda=\mu+\alpha$ we get
		$$\mu(x)c_1t^{\mu + \alpha}+\alpha(x) a \mu_1 c_0t^{\mu+\alpha} = (\mu+\alpha(x))c_1t^{\mu+\alpha}.$$
		Hence $a \mu_1 c_0 = c_1$.
		
		For $\lambda=\mu+2\alpha$ we get
		$$\mu(x)c_2t^{\mu + 2\alpha}+\alpha(x)a (\mu_1-1) c_1 t^{\mu+2\alpha} = (\mu+ 2 \alpha(x))c_2t^{\mu+2\alpha}.$$
		Hence $a(\mu_1-1)c_1=2 c_2$ thus $c_2=\frac{a(\mu_1-1)c_1}{2}.$ By substituting $c_1$ be the above expression we get
		$$c_2=\frac{a^2\mu_1(\mu_1-1)c_0}{2}.$$
		By iterating this process we get
		$$c_i=\frac{a^i\mu_1(\mu_1-1) \cdots (\mu_1-(i-1))c_0}{i!}.$$ 
		If $c_0=0$, then all coefficients $c_i$ vanish, so that $v=0.$
		If $c_0\neq 0$ we may suppose by scaling that $c_0=1.$
		Hence we see that the series (\ref{seriesv}) coincides with the series which interpolates the expression $(1+a)^{\mu_1}.$ By definition of the topology on $m_r$ we have rather to consider the power series (\ref{seriesvmitd}) for its convergence which is here given by
		$$\sum_{i\geq 0} \frac{a^i}{i!p^{i(m_0+m)}} (y^{(m)})^i.$$ Here we can argue again as in loc.cit. It is even simpler. Indeed, suppose that $u \not\in T_{0,r}$, i.e. $a\not\in p^{m_0+m}O_L.$ We may write $a=\frac{1}{\pi^k} p^{m_0+m}\alpha$ with $\alpha \in O_L^\times$ and $k\geq 1.$ Then  
		$\frac{a^i}{i!p^{i(m_0+m)}} = \alpha^i/i!(\pi^k)^i$. It follows that $$|d_i|\cdot ||y^{(m)}||^{(m)}_s= 1/|i!||(\pi^k)^i| s^i=\pi^{-i(k-1)} / |i!| \cdot (\frac{s}{|\pi|})^i.$$
		Since $\frac{s}{|\pi|}> p^{-\frac{1}{p-1}}$ (which is the convergence radius of the exponential function)  by assumption (\ref{r_and_s}) and $k\geq 1$ this sequence does not converge to zero.

		
		\vskip10pt
		\noindent  {\bf Second case:} $b\neq 0, a= 0.$ Then the equation (\ref{equationmu}) is just
		$$ \mu(x)v = xv -\alpha(x)bfv .$$
		
		\noindent  For $\lambda=\mu$ we get 
		$$ \mu(x)c_0t^\mu = \mu(x)c_0t^\mu - \alpha(x)bc_1(\mu_0+1)t^\mu.$$
		Hence $b c_{1}(\mu_0+1) = 0$. It follows that $c_{1}=0.$
		
		\noindent  For $\lambda=\mu+\alpha$ we get 
		$$ \mu(x)c_{1}t^{\mu+\alpha} = (\mu+\alpha)(x)c_{1}t^{\mu+\alpha}-\alpha(x)b c_2(\mu_0+2)t^{\mu+\alpha}.$$ By the step before it follows that  $b c_{2}(\mu_0+2) =0$ and thus  $c_{2}=0.$
		Successively we see that $c_{i}=0$ for all $i\in \bbN.$

		\noindent  For $\lambda=\mu-\alpha$ we get
		$$\mu(x)c_{-1}t^{\mu - \alpha} = (\mu-\alpha(x))c_{-1}t^{\mu-\alpha}- \alpha(x)b\mu_0 c_0 t^{\mu-\alpha}.$$
		Hence $b\mu_0c_0 = -c_{-1}$. 
		
		\noindent  For $\lambda=\mu-2\alpha$ we get
		$$\mu(x)c_{-2}t^{\mu - 2\alpha}  = (\mu- 2 \alpha(x))c_{-2}t^{\mu-2\alpha}-\alpha(x)b(\mu_0-1)c_{-1}t^{\mu-2\alpha}.$$
		Hence $b(\mu_0-1)c_{-1}=-2 c_{-2}$ and $c_2=\frac{-b(\mu_0-1)c_1}{2}.$ By substituting $c_{-1}$ be the above expression  we get
		$$c_{-2}=\frac{b^2\mu_0(\mu_0-1)c_0}{2}.$$
		By iterating this process we get
		$$c_{-i}=(-1)^i\frac{b^i\mu_0(\mu_0-1) \cdots (\mu_0-(i-1))c_0}{i ! }.$$ 
		Hence we see that the series (\ref{seriesv}) coincides up to sign  with the series in the first case. Hence the argumentation is symmetric  and we get a diverging power series.

		\vskip10pt
		\noindent  {\bf Third Case:} $a\neq 0, b\neq 0$. 
		Set $\bar{\mu}=\mu_0-\mu_1.$ We suppose that $|\mu_0|>|\mu_1|.$
		Thus $|\bar{\mu}|=|\mu_0|.$ The case $|\mu_0|<|\mu_1|$ is treated by symmetry.
		
		{\bf Case A:} $|\mu_0|>1.$ Hence $1> |\mu_1|$ by our assumption.

		\noindent  For $\lambda=\mu$ we get 
		$$\mu(x)c_0t^\mu+ \alpha(x)a (b\bar{\mu}c_0 +  c_{-1}(\mu_1+1)) t^\mu = \mu(x)c_0t^\mu +(b^2a-b)\alpha(x)f c_1t^{\mu + \alpha}.$$
		Hence $a(b\bar{\mu}c_0 + c_{-1}(\mu_1+1)) = (b^2a-b)c_1 (\mu_0+1)$ or 
		$$c_1= \frac{ab\bar{\mu}c_0+a(\mu_1+1)c_{-1}}{(b^2a-b)(\mu_0+1)}.$$
		For $\lambda=\mu+\alpha$ we get
		$$\mu(x)c_1t^{\mu + \alpha}+\alpha(x) a(b(\bar{\mu} + 2)
		c_1 t^{\mu+\alpha}+ e c_0t^\mu)= (\mu+\alpha(x))c_1t^{\mu+\alpha} +(b^2a-b)\alpha(x)(f c_2t^{\mu + 2 \alpha}). $$
		Hence $a(b(\bar{\mu} + 2)c_1 + \mu_1c_0) = c_1+ (b^2a-b)(\mu_0+2)c_2$ and thus
		$$c_2=\frac{c_1(ab(\bar{\mu}+2)-1)+a\mu_1c_0}{(b^2a-b)(\mu_0+2)}. $$
		For $\lambda=\mu+2\alpha$ we get
		$$\mu(x)c_2t^{\mu + 2\alpha}+\alpha(x)a (b(\bar{\mu} + 4)
		c_2 t^{\mu+2\alpha}+ e c_1 t^{\mu+\alpha}) = (\mu+ 2 \alpha(x))c_2t^{\mu+2\alpha}+ (b^2a-b)\alpha(x)(f c_3t^{\mu + 3 \alpha}).$$
		Hence $a(b(\bar{\mu} + 4)c_2 + (\mu_1 - 1)c_1) = 2 c_2 +(b^2a-b)(\mu_0+3)c_3$ and thus
		$$c_3=\frac{c_2(ab(\bar{\mu} + 4)-2) + a(\mu_1 - 1)c_1}{(b^2a-b)(\mu_0+3)}.$$ 
		By iterating this process we get
		$$a(b(\bar{\mu} + 2i)c_i + (\mu_1 - i +1)c_{i-1})= i c_i + (b^2a-b)(\mu_0+i+1)c_{i+1}$$ and thus
		$$c_{i+1}=\frac{c_i(ab(\bar{\mu} + 2i)-i) + a (\mu_1 - i +1)c_{i-1}}{(b^2a-b)(\mu_0+i+1)}$$
		for all $i\in \bbN.$
		
		\noindent  Next we examine the negative part.
		\noindent  For $\lambda=\mu-\alpha$ we get
		$$\mu(x)c_{-1}t^{\mu - \alpha}+\alpha(x)a (b(\bar{\mu} - 2)
		c_{-1} t^{\mu-\alpha}+ e c_{-2}t^{\mu-2\alpha}) = (\mu-\alpha(x))c_{-1}t^{\mu-\alpha} + (b^2a-b)\alpha(x)fc_0 t^{\mu}. $$
		Hence $a(b(\bar{\mu} - 2)c_{-1} + (\mu_1+ 2)c_{-2}) = -c_{-1} + (b^2a-b)\mu_0c_0$. Hence
		$$c_{-2}=\frac{(b^2a-b)\mu_0 c_0-(ab(\bar{\mu}-2)+1)c_{-1}}{a(\mu_1+2)}. $$
		For $\lambda=\mu-2\alpha$ we get
		$$\mu(x)c_{-2}t^{\mu - 2\alpha}+\alpha(x)a(b(\bar{\mu} - 4)
		c_{-2} t^{\mu-2\alpha}+ e c_{-3} t^{\mu-3\alpha}) = (\mu- 2 \alpha(x))c_{-2}t^{\mu- 2\alpha}+  (b^2a-b)\alpha(x)f c_{-1}t^{\mu-\alpha}.$$
		Hence 
		$a(b(\bar{\mu} - 4)c_{-2} + (\mu_1+3)c_{-3}) = -2 c_{-2} + (b^2a-b)c_{-1}(\mu_0-1)$ and so
		$$c_{-3}=\frac{(b^2a-b)c_{-1}(\mu_0-1)-(ab(\bar{\mu}-4)+2)c_{-2}}{a(\mu_1+3)}.$$ 
		By iterating this process we get 
		\begin{numequation}\label{cminusi}
			c_{-i}=\frac{(b^2a-b)c_{-i+2}(\mu_0-i+2)-(ab(\bar{\mu}-2(i-1))+i-1))c_{-i+1}}{a(\mu_1+i)}
		\end{numequation}
		for all $i\in \bbN.$

		If two successive elements $c_k,c_{k+1}$ vanish then one checks that all coefficients $c_i$ vanish, so that $v=0.$

		We make the following case distinction:
		
		\noindent  {\it Case $|ab\mu_0| > 1:$} Here we show that the Laurent series part of the series (\ref{seriesvmitd}) does not converge.
		
		\noindent  Since $|\mu_0| > |ab\mu_0|>1$ it follows that
		$|(ab(\bar{\mu}-2i)+i|=|ab||\mu_0|$ and $t_i:=|\mu_1+i|\leq \max\{|\mu_1|,|i|\}\leq 1$ for all $i\in \bbZ$. Further $|b^2a-b|=|b(ba-1)|=|b|$ since $ba\in m_L.$
		
		If $|ac_{-1}| = |c_0|$ then one checks by the maximum principle
		of the norm $|\cdot|$ that $|c_i|=|a^i c_0|$ for all $i \in \bbN$ since $|ab\mu_0|>1$ and $|\mu_1|\leq 1$ Then we are essentially in the First case ($a \neq 0$, $b=0)$. The difference is that here we even need not to consider the factor $1/i!.$ With the same argument we conclude that the series does not converge. 
		
		If $|ac_{-1}| \neq |c_0|$  it follows that 
		$$|c_{-2}|=\frac{\max\{|b||\mu_0||c_{0}|,  |ab||\mu_0||c_{-1}|\}}{|a|t_{2}}> |b||\mu_0||c_{-1}|.$$
		
		We claim that $|ac_{-2}| \neq |c_{-1}|$. Indeed suppose that $|ac_{-2}| = |c_{-1}|$. By multiplication the above inequality with $|a|$ we get
		$|c_{-1}| = |ac_{-2}| \geq \frac{|ab||\mu_0||c_{-1}|}{t_{2}}.$ But $|ab||\mu_0|>1$ and $t_2\leq 1.$ Hence we get a contradiction.
		Inductively we see that $|ac_{-i}| \neq |c_{-i+1}|$ for all $i\geq 1$ and therefore   that 
		\begin{equation*}
			|c_{-i-1}|=\frac{\max\{|b||\mu_0||c_{-i+1}|,  |ab||\mu_0||c_{-i}|\}}{|a|t_{-i-1}} >|b\mu_0c_{-i}|.
		\end{equation*}
		Then by successive application of this inequality we see that it is enough to show that the series $\sum_{i\leq 0} \frac{b^{-i}c_0}{(p^{m_0+m})^{-i}} (y^{(m)})^{-i}$ does not converge if $u^-\not\in T_{0,r},$ i.e. $b\not\in p^{m_0+m}O_L$.  This is done as in the previous case. 
		
		
		\vspace{0.5cm}
		\noindent The case  $|ab\mu_0| = 1$ is not possible by our assumption that $|\mu_0|\not\in |L|.$ 
		\vspace{1cm}
		
		\noindent  {\it Case $|ab\mu_0| < 1:$} 
		We argue similarly as in the previous case but consider the main series of (\ref{seriesvmitd}). Since this time $d_i=\frac{c_i}{\mu_1(\mu_1-1)\cdots (\mu_{1}-i+1) p^{i(m_0+m)}}$ and $|\mu_1|< 1$ it suffices to see that the series
		$\sum_i \frac{c_i}{p^{i(m_0+m)}} (y^{(m)})^i$ does not converge. Consider
		$$c_{i}=\frac{c_{i-1}(ab(\bar{\mu} + 2(i-1))-(i-1)) + a (\mu_1 - i+2)c_{i-2}}{(b^2a-b)(\mu_0+i)}.$$
		We have $|\bar{\mu} + 2(i-1))|=|\mu_0|$. 
		Suppose that 
		$|c_{i-1}(ab\mu_0-(i-1))| \neq |a (\mu_1 - i+2)c_{i-2}|$. Then we have 
		$$|c_i|\geq  \frac{|c_{i-1}ab\mu_0|}{|b\mu_0|}=|c_{i-1}||a|.$$
		by our assumption $|\mu_0|\not\in |L|$ again. If $p\nmid i-1$ then we even have $|c_i|>|c_{i-1}||a|.$
		
		Consider now $c_{i+1}$. If 
		$|c_{i}(ab\mu_0-i)| \neq |a (\mu_1 - i+1)c_{i-1}|$ then we get as above 
		$|c_{i+1}| \geq  |c_{i}||a|.$
		If 
		$|c_{i}(ab\mu_0-i)| = |a (\mu_1 - i+1)c_{i-1}|$ and $p\nmid i-1$, then we may  replace $c_j$ by $c_j\nu$ for all $j\geq i$ for some $\nu\in \overline{K}$ with $|\nu|<1$ such that $|\nu||c_i|>|c_{i-1}||a|$ since for disproving the convergence of the series (\ref{seriesvmitd}) we can consider the modified series with the "smaller" coefficients, as well. Then we are in the previous case.  If $p\mid i-1$ then $|\mu_1-i+1|<1.$ Since
		$d_i=\frac{c_i}{\mu_1\cdots (\mu_1-i+1)}$ we may replace $c_j$ by $c_j\nu$ for all $j\geq i$ for some $\nu\in \overline{K}$ with $|\mu_1-i+1|<|\nu|<1$ such that $|\nu c_i|/|\mu_1-i+1|>|c_{i-1}||a|.$ Then we are again in the situation considered before.
		
		Thus it suffices to prove that the series 
		$\sum_i \frac{a^i c_0}{p^{i(m_0+m)}} (y^{(m)})^i$ does not converge for $a\not\in p^{m_0+m}O_L$ which is done as in the previous cases.

		{\bf Case B: $|\mu_0| <1 :$}
		
		We argue similarly as before. Consider
		$$c_{i}=\frac{c_{i-1}(ab(\bar{\mu} + 2(i-1))-(i-1)) + a (\mu_1 - i+2)c_{i-2}}{(b^2a-b)(\mu_0+i)}.$$ Suppose that 
		$|c_{i-1}(ab(\bar{\mu} + 2(i-1))-(i-1))| \neq |a (\mu_1 - i+2)c_{i-2}|$. Then we have 
		$$|c_i|\geq  \frac{|c_{i-1}||i-1|}{|b|} > |c_{i-1}(i-1)|.$$
		If $p\nmid i-1$ then we even have $|c_i|>|c_{i-1}|.$
		If $p\mid i-1$ then we consider the inequality
		$$|c_i|\geq  \frac{|c_{i-1}||ab||\mu_0-2(i-1)|}{|b|} > |c_{i-1}||ab||\mu_0-2(i-1)|.$$ Hence since $p\neq 2$ and $|\mu_0| \geq |\mu_1|$
		$$\frac{|c_i|}{|\mu_1-i+1|}>|c_{i-1}||ab|.$$
		Consider now $c_{i+1}$. If 
		$|c_{i}(ab(\bar{\mu} + 2i)-i)| = |a (\mu_1 - i+1)c_{i-1}|$ then we may  replace again as above $c_j$ by $c_j\nu$ for all $j\geq i$ for some $\nu\in \overline{K}$ with $|\mu_1-i+1| < |\nu|<1$ such that still the above inequalities hold and we may assume that $|c_{i}(ab(\bar{\mu} + 2i)-i)| \neq |a (\mu_1 - i+1)c_{i-1}|.$ 
		Then for disproving the convergence  it suffices to prove that the series¸ 
		$\sum_i \frac{(ab)^i c_0}{p^{i(m_0+m)}} (y^{(m)})^i$ does not converge for $a,b\not\in p^{m_0+m}O_L$ which is done as in the previous cases.

	\end{proof}
	
	\vspace{1cm}
	\section{Cuspidality}
	
	We consider  inside $\cW_\kappa$ the subcategory $\cC_\kappa:=\cC \cap \cW_\kappa$ of cuspidal modules. 
	Let $M\in \cC_\kappa$, $Z\in \Rep^{alg}_K(P_{(1,n)}), V \in \Rep^\infty_K(T_0)$ and recall that $X=X({M,Z,V}):=  D(G_0) \otimes_{D(\frg,T_0)} M(Z)\otimes V'.$ In this final section we are going to prove that the representation $\cF^G_\kappa(M,Z,V)=c-\Ind^G_{G_0} X'$ satisfies the vanishing of the homology groups in the definition of supercuspidality in the sense of Kohlhaase. We start with the following result.

	\begin{prop}
		Let $P\subset G$ be a parabolic subgroup and $N=U_P$ its unipotent radical. Set $N_0=G_0 \cap N.$ Then $H^0(N_0,X)=0.$
	\end{prop}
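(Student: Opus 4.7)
My plan is to identify $H^0(N_0,X)=X^{N_0}$ with the Lie-algebra invariants $X^{\frn}$ and then to show that some root vector $x_\alpha\in\frn$ acts injectively on $X$. Since $N_0$ is a connected $p$-adic analytic group (indeed a uniform pro-$p$ group after possibly shrinking), and since its action on the continuous $D(G_0)$-module $X$ is recovered by exponentiation from the action of $\frn\subset \frg\subset U(\frg)\subset D(G_0)$, we have $X^{N_0}=X^{\frn}$. Thus $X^{\frn}\subseteq\ker(x_\alpha\cdot\colon X\to X)$ for every $\alpha\in \Phi(N)$, and it will suffice to prove injectivity of one such $x_\alpha$ on $X$.

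I would first check that $x_\alpha$ acts bijectively on the ``fibre'' $N:=M(Z)\otimes_K V'$. Cuspidality of $M$ gives that $x_\alpha$ is bijective on $M$. Because $Z$ is a finite-dimensional algebraic $P$-representation and $\alpha$ is a root of $\frn\subseteq \Lie(P)$, the element $x_\alpha$ acts nilpotently on $Z$; on $V'$ it acts by zero. Writing the action on $M\otimes_K Z\otimes_K V'$ as $A+B$ with $A=x_\alpha\otimes 1\otimes 1$ bijective and $B=1\otimes x_\alpha\otimes 1$ commuting with $A$ and locally nilpotent (by finite dimensionality of $Z$), the Neumann series
$$
(A+B)^{-1}\;=\;\Big(\sum_{k\ge 0}(-A^{-1}B)^k\Big)A^{-1}
$$
terminates on every vector, giving the desired bijectivity of $x_\alpha$ on $N$.

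I would then transfer this bijectivity to $X=D(G_0)\otimes_{D(\frg,T_0)} N$ via the Ore-localisation approach announced in the introduction. The nonzero root vectors in $\frn$ generate a two-sided Ore set $S$ in $U(\frg)$ (classical); the same holds in $D(G_0)$, which one proves through an Iwahori-type decomposition: for a sufficiently small uniform pro-$p$ open subgroup $H\subset G_0$, multiplication induces an isomorphism $D(H)\cong D(H^-)\,\hat\otimes\, D(H^L)\,\hat\otimes\, D(H^+)$ relative to $P$, and the Ore condition for $x_\alpha$ reduces to an iterated commutator / Neumann argument exploiting the local nilpotency of $\ad(x_\alpha)$ on each factor. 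Once this is in place, $S$ acts bijectively on $N$ by cuspidality, hence on $X$: the canonical map $X\to S^{-1}X$ is an isomorphism of $D(G_0)$-modules, so $x_\alpha$ is invertible on $X$, forcing $X^{\frn}=0$.

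The hardest step is the rigorous verification of the Ore condition for $x_\alpha$ in $D(G_0)$, since the commutator of $x_\alpha$ with a general distribution is again a distribution and must be controlled through the completed tensor decomposition of $D(H)$. I expect this to follow along the lines of the density/completion arguments already used in the exactness proof, in the spirit of \cite[Prop.\ 3.7]{OS3}.
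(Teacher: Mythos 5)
Your route is genuinely different from the paper's proof of this proposition. The paper establishes $H^0(\frn,X)=0$ by a Bruhat--Iwahori decomposition of $D(G_0)\otimes_{D(\frg,T_0)}M(Z)$ into summands of the form $\bigoplus_{u,v}\delta_u\delta_v\otimes M(Z)^w_r$, and then a direct power-series argument showing that $\Ad((uv)^{-1})\Ad(w^{-1})(\frn)$ still acts injectively on the completed module. You instead want to reach the conclusion through Ore localisation at $S=\frn\, U(\frn)$. The paper does use exactly that idea, but only afterwards: Lemma \ref{lemma_action_biejctive} shows $\frn$ acts \emph{bijectively} on $X$, using Ore only for surjectivity and importing injectivity from the present proposition, and the final proposition of the section then views $X'$ as an $S^{-1}U(\frg)$-module. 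So deriving injectivity too from the localisation picture, as you propose, is a sound and arguably more uniform reorganisation of the argument. However, as written there are two real gaps.

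First, your Neumann-series argument that $x_\alpha$ is bijective on $M(Z)$ is based on the wrong model of the action. The $\frg$-module $M(Z)$ is \emph{not} $M\otimes_K Z$ with the naive diagonal action -- indeed $x_\alpha$ need not act on $Z$ at all, since $\alpha$ is a root of $\frn$ for an arbitrary parabolic $P$ while $Z$ is only a $P_{(1,n)}$-module. The $\frg$-action is defined through the identification $M(Z)\cong M\otimes_{\cO_{\bbP^n}(D_+(t_0))}\cE_Z(D_+(t_0))$ with the twisted action on sections of the homogeneous bundle $\cE_Z$, so the operator $x_\alpha$ is not of the form $A\otimes 1+1\otimes B$ with commuting invertible and locally nilpotent pieces. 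What you should do is simply invoke the fact, recorded in the paper immediately after $(\ref{exact_W})$, that $M(Z)$ is again cuspidal; by the definition of cuspidality this gives bijectivity of every $x_\alpha$, $\alpha\in\Phi$, on $M(Z)$ and hence on $M(Z)\otimes_K V'$ (where $\frn$ acts trivially on $V'$).

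Second, and more seriously, the step ``$S$ acts bijectively on the fibre, \emph{hence} on $X$: the canonical map $X\to S^{-1}X$ is an isomorphism'' asserts exactly what has to be proved. Injectivity of $s\in S$ on $X$ is by definition the vanishing of the $S$-torsion of $X$, i.e. the injectivity of $X\to S^{-1}X$; you cannot invoke the latter for free. What actually makes this work is the identity
$$D(G_0)\otimes_{D(\frg,T_0)}S^{-1}D(\frg,T_0)\;\cong\;S^{-1}D(G_0),$$
where the multiplication map is surjective by the right Ore condition and injective because $r t^{-1}=0$ in a right Ore localisation forces $rs=0$ for some $s\in S$, whence $r\otimes t^{-1}=rs\otimes (ts)^{-1}=0$. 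Combining this with $M(Z)\otimes V'\cong S^{-1}D(\frg,T_0)\otimes_{D(\frg,T_0)}(M(Z)\otimes V')$ (valid because $S$ is already invertible on the fibre) gives $X\cong S^{-1}D(G_0)\otimes_{S^{-1}D(\frg,T_0)}(M(Z)\otimes V')$, on which $S$ visibly acts invertibly. This lemma, together with the verification that $S$ is a two-sided Ore set in $D(\frg,T_0)$ as well as in $D(G_0)$ (the latter is cited by the paper to \cite[Lemma 4.2]{Ma}; the former needs that $T_0$ normalises $N$), is the substance that is missing from your sketch. As written, ``hence on $X$'' is circular.
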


	\begin{proof}
		We follow the proof of  \cite[Thm. 3.5]{OSch}. We clearly have $H^0(N_0,X)=H^0(\frn,X)^{N_0}$ so that it suffices to see that  $H^0(\frn,X)=0.$ Since the action of $\frn$ on $V$ is trivial we may suppose that $V={\bf 1}^\infty$ is the trivial representation, cf \cite[Lemma 3.4]{OSch}.  

		We write $X$ in the shape 
		$$X=	\varprojlim_r \cM(Z)_r$$    where $\cM(Z)_r=D_r(G_0) \otimes_{D(\frg, T_0)} M(Z)$. If we denote
		by $M(Z)_r$ the topological closure\footnote{here we use the notation of \cite{OSch}.} of $M$ in $\cM(Z)_r$, we get by \cite[5.6.5]{OS2} finitely many elements  $g \in G_{0}$ such that
		\begin{equation}\label{equation_split} \cM(Z)_r \simeq \bigoplus_{g} \delta_g \otimes M(Z)_r
		\end{equation}
		and the action of $\frx \in \frn$ is given by
		\begin{equation*}
			\frx \cdot \sum_{g} \delta_g \otimes m_{g}=\sum \delta_g \otimes
			\mathrm{Ad}(g^{-1})(\frx)m_{g}.
		\end{equation*}

		But all elements of $\mathrm{Ad}(g^{-1})(\frn)$ act injectively on $M(Z)$ since $M(Z)$ is cuspidal. By a similar argument as in  Step 1 of \cite[Theorem 5.7]{OS2} they act 
		injectively on
		$M(Z)_r$, as well since each element in $M(Z)_r$ is again a power series in weight vectors. We conclude that $H^0(\mathrm{Ad}(g^{-1})(\frn), M(Z)_r)=0.$ 	Hence by passing to the limit we
		get $H^0(\frn, \cM(Z))=0$. The claim follows.  
	\end{proof}

	\begin{lemma}\label{lemma_action_biejctive}
		With the notation above the Lie algebra $\frn$ acts bijectively on $X.$
	\end{lemma}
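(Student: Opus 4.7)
The plan is to refine the Bruhat--Iwahori decomposition of the preceding proposition in order to exploit the full bijectivity of root-vector actions on cuspidal modules, not merely their injectivity. Since $V'$ is finite-dimensional with trivial $\frg$-action, it is inert for the $\frn$-action on $X$ and I suppress it. I would begin by recalling the decomposition $X \simeq \bigoplus_{w \in W} \varprojlim_r \cM(Z)_r^w$ with $\cM(Z)_r^w \simeq \bigoplus_{u,v} \delta_u \delta_v \otimes M(Z)_r^w$, where a non-zero $\frx \in \frn$ acts on the $(u,v)$-summand through $\frx' := \mathrm{Ad}((uv)^{-1} w^{-1})(\frx)$ on $M(Z)_r^w$.

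The key observation is that $\frx$ is nilpotent (being in the unipotent radical of a parabolic), so its conjugate $\frx'$ is a non-zero nilpotent element of $\frg$ and therefore lies in $\frg \setminus \frt$. By the torsion-free characterisation of cuspidal modules recorded in the remark after the definition of cuspidality (citing \cite{BKLM}), every such element acts \emph{bijectively} on $M(Z)$. The main work is then to propagate this bijectivity from $M(Z)$ to the completion $M(Z)_r^w$.

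Injectivity on the completion is essentially formal and is already implicit in the proof of the preceding proposition: a putative kernel element, expanded as a convergent sum of weight vectors, would have each weight component killed by $\frx'$, contradicting the injectivity of $\frx'$ on the finite-dimensional weight spaces of $M(Z)$. For surjectivity, the cleanest route, matching the strategy signalled in the introduction, is to invoke the Ore property: $S := \frn U(\frn) \setminus \{0\}$ is an Ore set in $U(\frg)$ and in the relevant completions $U_r(\frg)$, and cuspidality makes $M(Z)$ a module over $S^{-1} U(\frg)$. The completion $M(Z)_r^w$ inherits an $S^{-1} U_r(\frg)$-module structure, yielding bijectivity of elements of $\frn$ there. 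Assembling over $(u,v)$, over $r$, and over the Bruhat cells $w \in W$ then delivers bijectivity of $\frn$ on all of $X$.

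The main obstacle, whether approached by bounding the inverse operator directly on each weight space or by the Ore route, is surjectivity on the completion: injectivity on a complete topological module is largely formal via weight-space considerations, whereas surjectivity requires producing a preimage as a convergent series, and this forces a uniform control on the inverse. It is here that cuspidality must be used in its strongest form rather than merely as an injectivity statement.
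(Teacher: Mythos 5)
Your strategy diverges from the paper's in a way that creates a genuine gap. The paper's proof of this lemma never touches the Bruhat--Iwahori decomposition or the Banach completions $M(Z)_r^w$. Instead it exploits the fact that $X = D(G_0) \otimes_{D(\frg,T_0)} M(Z) \otimes V'$ is an \emph{algebraic} tensor product, so every element is a finite sum of elementary tensors $\delta \otimes m$. Injectivity was established in the preceding proposition, and surjectivity is a one-line application of the Ore condition for $S = \frn U(\frn)$ in $D(G_0)$ (citing Mathieu): given $s \in \frn$ and $\delta \in D(G_0)$, choose $\delta'$ and $s' \in S$ with $s\delta' = \delta s'$; since $s'$ acts bijectively on the cuspidal module $M(Z)$, pick $m'$ with $s'm' = m$, and then $\delta \otimes m = \delta s' \otimes m' = s\delta' \otimes m' = s(\delta' \otimes m')$. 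No completions, no convergence, no uniform bounds.

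By contrast, your route requires showing that $\frn$ acts bijectively on the completions $M(Z)_r^w$, and you supply no argument for the surjectivity there. You assert that ``$S$ is an Ore set in $U_r(\frg)$'' and that ``$M(Z)_r^w$ inherits an $S^{-1}U_r(\frg)$-module structure,'' but neither claim is automatic: Ore conditions do not pass to Fr\'echet--Stein completions for free, and even if they did, $M(Z)_r^w$ is a topological closure, not a base change $U_r(\frg) \otimes_{U(\frg)} M(Z)$, so one cannot simply transport the $S^{-1}U(\frg)$-module structure. Producing a preimage under $s \in \frn$ as a \emph{convergent} series on $M(Z)_r^w$ is exactly the kind of analytic problem the paper's argument is designed to avoid, and you yourself flag it as the main obstacle without resolving it. The fix is to forget the completions entirely and argue on $X$ as an abstract $D(G_0)$-module, as above.
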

	
	\begin{proof}
		Again we may suppose that $V={\bf 1}^\infty$ is the trivial representation.
		In the above proposition we have actually seen that any element $s\in \frn$ acts injectively on $X.$
		Now we show that it acts surjectively, as well. We have thus to show that $s\cdot D(G_0) \otimes_{D(\frg,T_0)} M(Z)= D(G_0) \otimes_{D(\frg,T_0)} M(Z).$ For this it is enough to see that the tensors $\delta \otimes m$ with $\delta \in D(G_0)$ and $m\in M(Z)$ are in the image of the multiplication map by $s.$. 
		Now by \cite[Lemma 4.2]{Ma} the  ad-nilpotent elements $\frn$ satisfy  Ore's  localizability condition in $D(G_0)$.
		By definition there are $\delta' \in D(G_0)$ and $s'\in S$ with
		$s\delta'=\delta s'.$ Since $s'$ acts bijectively on $M(Z)$ as $M(Z)$ is cuspidal there is some $m'\in M(Z)$ with $s'm'=m$. Hence $\delta \otimes m$=$\delta \otimes s'm'=\delta s'\otimes m' =s\delta'  \otimes m'$ and the claim follows.
	\end{proof}
	
	\begin{prop}
		We have $H_i(N,c-\Ind^G_{G_0}(X'))=0$ for all $i \geq 0.$
	\end{prop}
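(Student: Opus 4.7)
The plan is to reduce the group homology $H_i(N,V)$, where $V := c-\Ind^G_{G_0}(X')$, to the Lie algebra homology $H_i(\frn,V)$, and then exploit the bijective action of $\frn$ on $V$---obtained by propagating Lemma \ref{lemma_action_biejctive} from $X$ to the compactly induced representation---to kill it.

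First, I would show that every nonzero $\xi \in \frn$ acts bijectively on $V$. The compact induction decomposes as a locally convex direct sum $V = \bigoplus_{g \in G/G_0} \delta_g \cdot X'$, and on the summand indexed by $g$ the element $\xi \in \frg$ acts as $\Ad(g^{-1})(\xi)$ does on $X'$. Since $\Ad(g^{-1})(\frn)$ is the Lie algebra of the unipotent radical of the conjugate parabolic $g^{-1}Pg$, applying Lemma \ref{lemma_action_biejctive} to this parabolic gives bijectivity of each element of $\Ad(g^{-1})(\frn)$ on $X$, hence dually on $X'$. Summing over $g$, $\xi$ acts bijectively on $V$.

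Next, I would identify $H_i(N,V)$ with the Lie algebra homology $H_i(\frn,V) = \Tor^{U(\frn)}_i(K,V)$ (filtering $N$ by compact open uniform pro-$p$ subgroups $N_m$ and using the standard comparison between locally analytic $N_m$-homology and Chevalley--Eilenberg homology for exponential $p$-adic analytic groups), and then kill the latter. Since $\frn$ is nilpotent, the center $Z(\frn)$ is nonzero; choose $0 \neq z \in Z(\frn)$. By the previous step $z$ acts invertibly on $V$, so the one-dimensional abelian subalgebra $Kz$ satisfies $H_\ast(Kz,V) = 0$, and the Hochschild--Serre spectral sequence
\[ E^2_{p,q} = H_p(\frn/Kz,\, H_q(Kz,V)) \Rightarrow H_{p+q}(\frn,V) \]
degenerates to zero. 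Alternatively, following the suggestion in the introduction, one can view $V$ as an $S^{-1}U(\frg)$-module with $S$ the Ore set generated by $\frn\setminus\{0\}$ in $U(\frn)$, and invoke flat base change
\[ \Tor^{U(\frn)}_i(K,V) = \Tor^{S^{-1}U(\frn)}_i\bigl(K \otimes_{U(\frn)} S^{-1}U(\frn),\, V\bigr), \]
which vanishes since $K \otimes_{U(\frn)} S^{-1}U(\frn) = 0$ (as $\frn \cap S \neq \emptyset$, so $\frn \cdot S^{-1}U(\frn) = S^{-1}U(\frn)$).

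The principal obstacle is making the reduction from locally analytic group homology $H_i(N,V)$ to algebraic Lie algebra homology $H_i(\frn,V)$ rigorous, given the noncompactness of $N$ and the topology on $V$: one has to verify that the Koszul-type computation of homology of a uniform pro-$p$ subgroup agrees with Chevalley--Eilenberg and that passing to the limit over compact opens exhausting $N$ is harmless. A secondary technicality is, on the Ore localization route, checking that the invertible action of $\frn$ propagates to invertibility for all of $S$ on the topological module $V$. Once past these points, the purely algebraic vanishing (either via Hochschild--Serre or via flat base change) is clean.
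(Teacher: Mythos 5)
Your overall strategy --- reduce $H_i(N,V)$ to Lie algebra homology $H_i(\frn,V)$, propagate the bijective $\frn$-action from $X$ to $V=\bigoplus_{g\in G/G_0}\delta_g X'$ via conjugation and Lemma \ref{lemma_action_biejctive}, and then kill the Lie algebra homology --- coincides with the paper's. The one weak point, which you rightly flag as ``the principal obstacle'', is the comparison $H_i(N,V)\cong H_i(\frn,V)$: the paper does not prove this by hand but invokes Kohlhaase's Theorem 7.1 of \cite{K2}, a non-trivial result designed exactly for this situation (here $V$ is not admissible and $N$ is non-compact, so the Lazard-type identification for uniform pro-$p$ groups does not transfer painlessly to the limit), and you should cite that theorem rather than attempt to reconstruct it. Once this is granted the rest of your argument is sound. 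The paper kills $H_\ast(\frn,X')$ via the Ore localization $S^{-1}U(\frg)$, with $S$ the multiplicative system generated by $\frn$ and \cite[Lemma 4.2]{Ma} supplying the Ore condition; this is your flat-base-change variant. Your first alternative --- choosing $0\neq z\in Z(\frn)$, noting $H_\ast(Kz,V)=0$ because $z$ acts bijectively, and collapsing the Lie algebra Hochschild--Serre spectral sequence --- is a pleasant and slightly more elementary route that avoids having to verify Ore localizability at all. The paper's small variation of first using the direct sum decomposition to reduce to showing $H_i(\frn,X')=0$ for all parabolics before localizing, rather than localizing on $V$ globally as you do, is merely cosmetic.
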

	
	\begin{proof}
		By a result of Kohlhaase \cite[Theorem 7.1]{K2} we have   $H_i(N,c-\Ind^G_{G_0}(X'))=H_i(\frn, c-\Ind^G_{G_0}(X'))$ for all $i\geq 0.$ 
		Since $c-\Ind^G_{G_0}(X')$ is a direct sum $\bigoplus_{g\in G/G_0} gX'$ of copies of $X'$ and $H_i(\frn,gX')=H_i(Ad(g)(\frn),X')$  it suffices to see that $H_i(\frn, X')=H_i(U(\frn), X')=0$ since we consider from the very beginning arbitrary parabolic subgroups $P.$
		
		By Lemma \ref{lemma_action_biejctive} elements of $\frn$ act bijectively on $X$. Thus they act bijectively on $X'$, as well, and we can consider $X'$ as a module for the localisation $S^{-1}U(\frg)$ where $S=\frn U(\frn) \subset U(\frg)$ is the multiplicative system generated by $\frn.$  
		But now projective modules remain projective under the localisation functor $S^{-1} -$ which is  moreover exact.
		It follows that any projective resolution of $X'$ by $U(\frg)$-modules gives rise to a  projective resolution of $X'$ by $S^{-1}U(\frg)$-modules.  
		But for any $S^{-1}U(\frg)$-module $Y$ we have $H_0(\frn,Y)=0$
		since elements of $\frn$ act bijectively on $Y.$ Hence all cohomology groups vanish.  
	\end{proof}

	\newpage
	
	\appendix
	\section{Hecke operators on ind-admissible representations, by Andreas Bode}
	
	The purpose of this appendix is twofold: Firstly, we provide some additional background on the category of ind-admissible representations, advertising a dual perspective using pro-coadmissible modules over the distribution algebra -- in particular, we show that the definition of ind-admissibility is independent of the choice of compact open subgroup (Corollary \ref{indep}), and that ind-admissible representations form an abelian category (Proposition \ref{iaabelian}). Secondly, we describe more explicitly the $\mathrm{GL}_2(L)$-representation obtained from the cuspidal Lie algebra representation $M^\mu$, and produce non-trivial Hecke operators on it. In particular, the representation
	\begin{equation*}
		W:=\mathrm{c-Ind}_{G_0Z_G}^G((D(G_0)\otimes_{D(\mathfrak{g}, T_0)}M^\mu)')
	\end{equation*}
	is not topologically irreducible, and one is led to consider the representations
	\begin{equation*}
		W\otimes_{\mathcal{H}, \rho}K
	\end{equation*} 
	for a fixed Hecke character $\rho: \mathcal{H}=\mathrm{End}_G(W)^{\mathrm{op}}\to K$.
	
	\subsection{Ind-admissible representations and pro-coadmissible modules}
	Throughout, let $L$ be a finite extension of $\mathbb{Q}_p$ and let $K$ be a spherically complete nonarchimedean field extension of $L$. Let $H$ be a locally $L$-analytic group, and let $C\leq H$ be a compact open subgroup.
	
	We write $D(H)$ for the distribution algebra $D(H, K)$ of locally $L$-analytic distributions, likewise $D(C)=D(C, K)$. Choose any increasing sequence $1/p\leq r_n<1$ tending to $1$. Then \cite{ST2} defines Noetherian Banach algebras $D_{r_n}(C, K)$ such that $D(C)\cong \varprojlim D_{r_n}(C, K)$ exhibits $D(C)$ as a Fr\'echet--Stein algebra. We will shorten $D_{r_n}(C, K)$ to $D_n(C)$.
	
	Recall from Definition \ref{dfn_qa} that a locally $L$-analytic $H$-representation $V$ is called ind-admissible if it can be written as the inductive limit $V\cong \varinjlim V_i$ of a countable chain of admissible $C$-subrepresentations.
	
	Note that such an inductive limit is automatically a strict inductive limit, as each morphism of admissible $C$-representations is strict (\cite[Proposition 6.4]{ST2}).
	
	For now, we have fixed $C$ and will say `ind-admissible' when we really mean `ind-admissible relative to $C$'. We will see at the end of this section that our definition does not depend on the choice of compact open subgroup.
	
	Note that by Lemma \ref{iacompact}, any ind-admissible representation $V$ is of compact type -- in particular, $V$ is reflexive and its dual $V'$ is a nuclear Fr\'echet space.
	
	Explicitly, $V|_C\cong \varinjlim V_i$ and $V_i\cong \varinjlim V_{i, n}$, where the $V_{i, n}=(D_n(C)\otimes_{D(C)} V'_i)'$ are the Banach spaces consisting of vectors in $V_i$ with a prescribed radius of analyticity. In particular, there are natural morphisms $V_{i, n}\to V_{i+1, n}$ such that the composition $V_{i, i}\to V_{i+1, i}\to V_{i+1, i+1}$ is compact by \cite[Remark 16.7]{S1}, making $V\cong \varinjlim V_{i, i}$ a space of compact type.
	
	
	
	

	We note that the maps $V_i\to V_{i+1}$ are generally not compact. We thus have two different presentations for $V$ with distinct advantages and disadvantages: $V\cong \varinjlim V_i$ is a strict inductive limit with transition maps which are not necessarily compact, and $V\cong \varinjlim V_{i, i}$ realizes $V$ as a (generally non-strict) inductive limit of Banach spaces with compact transition maps.
	
	\begin{lemma}
		\label{limiso}
		Let $V$ be an ind-admissible $H$-representation. As before, write $V\cong \varinjlim V_i$ and $V\cong \varinjlim V_{i, i}$. The natural morphisms
		\begin{equation*}
			V'\to \varprojlim V'_i
		\end{equation*}
		and
		\begin{equation*}
			V'\to \varprojlim V'_{i, i}
		\end{equation*}
		are isomorphisms of Fr\'echet spaces, where the right hand side is equipped with the inverse limit topology.
	\end{lemma}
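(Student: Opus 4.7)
The plan is to derive both isomorphisms from duality results already in place.

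For the first map $V'\to\varprojlim V'_i$, the situation is directly covered by Proposition \ref{proj_lim_Frechet}: since $V\cong\varinjlim V_i$ is by definition a strict inductive limit of admissible $C$-subrepresentations, that proposition identifies $V'_b$ with $\varprojlim (V_i)'_b$ endowed with the projective limit topology. So this part requires no further argument.

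For the second map $V'\to\varprojlim V'_{i,i}$, the strategy is to recognise $V\cong\varinjlim V_{i,i}$ as a compact inductive limit of Banach spaces and invoke \cite[Proposition 16.10]{S1}, which dualises such limits to the Fr\'echet projective limit of the strong duals. The key input is that the diagonal transition map $V_{i,i}\to V_{i+1,i+1}$ is compact; this follows from its factorisation $V_{i,i}\to V_{i+1,i}\to V_{i+1,i+1}$, since the second factor is a compact linking map in the compact-type structure of $V_{i+1}$ (\cite[Remark 16.7]{S1}, as noted in the paragraph preceding the lemma). Granted this, \cite[Proposition 16.10]{S1} yields the claimed topological isomorphism directly.

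The main (and rather mild) obstacle is to verify that the inverse limit topology on the right-hand side matches the strong dual topology on $V'_b$. In both cases this is encoded in the cited duality theorems and ultimately rests, as in the proof of Proposition \ref{proj_lim_Frechet}, on an application of the open mapping theorem between two Fr\'echet spaces. Bijectivity on underlying sets is straightforward: injectivity is immediate since $V=\bigcup_i V_i=\bigcup_i V_{i,i}$, and surjectivity amounts to gluing a compatible sequence of continuous functionals, using that $V$ is bornological (again established in Proposition \ref{proj_lim_Frechet}) so that continuity can be checked on each step of the filtration. No new analytic difficulties arise; the lemma is essentially a compatibility statement that reduces to two standard references.
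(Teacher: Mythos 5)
Your proof is correct, and the handling of the second isomorphism (recognising $V \cong \varinjlim V_{i,i}$ as a compact inductive limit of Banach spaces and invoking \cite[Proposition 16.10]{S1}) is exactly what the paper does. Where you differ is in the first isomorphism $V' \to \varprojlim V'_i$: you obtain it by citing Proposition \ref{proj_lim_Frechet} from the main text, whereas the paper derives it from the second isomorphism via a double-limit argument: once one knows $V' \cong \varprojlim_i V'_{i,i} \cong \varprojlim_{i,j} V'_{i,j}$ (the diagonal being cofinal), one combines this with $V'_i \cong \varprojlim_j V'_{i,j}$ (which holds because each $V_i$ is admissible, hence of compact type with $V_i = \varinjlim_j V_{i,j}$) and Fubini for projective limits to conclude $V' \cong \varprojlim_i V'_i$. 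The paper's route has the advantage of keeping the appendix self-contained — the appendix is meant to provide an alternative, module-theoretic development, and Proposition \ref{proj_lim_Frechet} in the main text is proved by a somewhat different hands-on argument adapting \cite[Proposition 16.10]{S1} to the bornological setting — while your route is shorter and perfectly legitimate if one is willing to lean on the main text. Either way, the content is the same, and no step in your argument would fail.
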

	\begin{proof}
		Since the $V_{i, i}$ exhibit $V$ as a space of compact type, the second isomorphism follows from \cite[Proposition 16.10]{S1}.
		
		But then $V'\cong \varprojlim_{i, j} V'_{i, j}$, and the first isomorphism follows from $V'_i\cong \varprojlim_j V'_{i, j}$ for each $i$.
	\end{proof}
	
	\begin{cor}
		\label{closed}
		Let $V$ be an ind-admissible $H$-representation and let $U$ be a closed subrepresentation. Then $U$ and $V/U$ are also ind-admissible.
	\end{cor}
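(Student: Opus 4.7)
The plan is to transport the ind-admissible filtration of $V$ through the short exact sequence $0 \to U \to V \to V/U \to 0$ by taking intersections and images, and to verify the topological requirements using Lemma \ref{limiso} on the dual side, where the Fr\'echet structure is easier to control than the strict inductive limit structure.

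For the subrepresentation $U$, I would write $V|_C \cong \varinjlim V_i$ with $V_i$ admissible (and closed in $V$ by the lemma following Definition \ref{dfn_qa}), and set $U_i := U \cap V_i$. Since both $U$ and $V_i$ are closed and $C$-stable in $V$, so is $U_i$; as a closed subrepresentation of the admissible representation $V_i$, it is itself admissible by \cite[Prop. 6.4]{ST2}. Set-theoretically $U = \bigcup_i U_i$, and each $U_i$ is closed in $U_{i+1}$ with the induced topology agreeing with the one coming from $V_{i+1}$, because the inclusions $V_i \hookrightarrow V_{i+1}$ are closed embeddings. This is exactly the standard fact that a closed subspace of a strict LF-space inherits a strict LF-presentation by intersection, so $U \cong \varinjlim U_i$ as topological $C$-representations, and $U$ is ind-admissible.

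For the quotient $V/U$, let $q \colon V \to V/U$ denote the projection and set $W_i := q(V_i)$. Algebraically $W_i \cong V_i/U_i$, and since $U_i$ is closed in $V_i$ with $V_i$ admissible, the quotient $V_i/U_i$ is again admissible by \cite[Prop.~6.4]{ST2}. The substantive point is that $W_i$ sits as a closed subrepresentation of $V/U$ with the correct topology and that $V/U \cong \varinjlim W_i$ is strict. I would handle this via duality: $(V/U)' = U^\perp \subset V'$, and dualizing $0 \to U_i \to V_i \to W_i \to 0$ gives short exact sequences $0 \to W_i^\vee \to V'_i \to U_i' \to 0$ of coadmissible $D(C)$-modules, where $W_i^\vee = \ker(V'_i \to U_i')$ identifies with the restriction of $U^\perp$ to $V_i$. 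Using Lemma \ref{limiso} we have $V' \cong \varprojlim V'_i$, and since the kernels of morphisms between coadmissible modules are coadmissible (with finiteness of higher $\varprojlim$ terms as in \cite[Lem. 3.6]{ST2}), one deduces $(V/U)' \cong \varprojlim W_i^\vee$ as a Fr\'echet space. Dualizing back using reflexivity of compact type spaces yields $V/U \cong \varinjlim W_i$ with the correct strict inductive limit topology.

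The main obstacle is the quotient case: on its face, one must show $V_i + U$ is closed in $V$ in order for $W_i$ to embed as a closed subrepresentation of $V/U$, and closedness of sums of closed subspaces is not automatic in general locally convex spaces. My strategy is to circumvent this entirely by arguing on the Fr\'echet dual side, where the analogous statement---that $U^\perp \cap V'$ maps to the pro-coadmissible system $\{W_i^\vee\}$ as its projective limit---reduces to continuity of the restriction maps $V' \to V'_i$ and the (exact) behaviour of the Fr\'echet-Stein formalism. Once this identification at the level of duals is in place, the open mapping theorem for Fr\'echet spaces (invoked as in Proposition \ref{proj_lim_Frechet}) together with reflexivity of $V/U$ gives the desired topological isomorphism $V/U \cong \varinjlim W_i$ without needing to verify closedness of $V_i + U$ directly in $V$.
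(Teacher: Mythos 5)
Your construction of the filtrations is the same as the paper's (set $U_i := U\cap V_i$ for the subrepresentation and $W_i := V_i/U_i$ for the quotient), but you have identified the wrong case as the delicate one, and the step you dismiss as standard is precisely where the paper does the real work.

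For the quotient, you worry about whether $V_i+U$ is closed in $V$ and build a duality argument to circumvent this. That concern is misplaced: since $\varinjlim$ is right exact on locally convex $K$-vector spaces, one has $V/U\cong\varinjlim V_i/U_i=\varinjlim W_i$ directly, and the injective transition maps $W_i\to W_{i+1}$ are morphisms of admissible $C$-representations, hence automatically strict by \cite[Prop. 6.4]{ST2}. So $V/U$ is a strict inductive limit of admissible $C$-representations and is ind-admissible with no duality needed; closedness of $V_i+U$ never enters. (Your dual argument for $(V/U)'\cong\varprojlim W_i^\vee$ is not wrong in outline, but it is unnecessary labour.)

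The genuine gap is in your treatment of $U$. You assert that $U\cong\varinjlim U_i$ topologically ``is exactly the standard fact that a closed subspace of a strict LF-space inherits a strict LF-presentation by intersection.'' But the steps $V_i$ of the filtration here are admissible $C$-representations, i.e.\ spaces of compact type (countable compact inductive limits of Banach spaces), not Fr\'echet spaces. So $V$ is \emph{not} a strict LF-space in the sense required for that result, and the Dieudonn\'e--Schwartz/K\"othe hereditary theorem you are invoking (whose proof uses metrizability of the steps) does not apply as stated. The paper explicitly flags this comparison of the subspace topology with the inductive limit topology on $\varinjlim U_i$ as the nontrivial point and proves it by a duality argument: one uses \cite[Prop. 1.2]{ST1} to obtain a strictly exact sequence of nuclear Fr\'echet duals $0\to W'\to V'\to U'\to 0$, notes that the projective system $(W_i')$ has surjective transition maps between Fr\'echet spaces and is therefore $\varprojlim$-acyclic, deduces that $\varprojlim V_i'\to\varprojlim U_i'$ is surjective, and concludes via the Open Mapping Theorem that $U'\to\varprojlim U_i'$ is a topological isomorphism, whence $U\cong\varinjlim U_i$ by reflexivity. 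You would need to either supply such an argument or locate a hereditary theorem valid for strict inductive limits of compact type spaces; as written, your appeal to the LF case leaves a hole at the crux of the proof.
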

	\begin{proof}
		Let $V|_C=\varinjlim V_i$, with each $V_i$ an admissible $C$-representation. Set $U_i=V_i\cap U$, a closed, and hence admissible, $C$-subrepresentation of $V_i$.
		
		Write $W=V/U$ and let $W_i=V_i/U_i$, an admissible $C$-representation. By construction, the natural map $W_i\to W_{i+1}$ is injective. Since $\varinjlim$ is right exact on the category of locally convex $K$-vector spaces, $W=V/U\cong \varinjlim V_i/U_i= \varinjlim W_i$, so $W$ is an ind-admissible representation.
		
		It remains to verify that $U$ is actually isomorphic to $\varinjlim U_i$, i.e. that the subspace topology on $U$ agrees with the inductive limit topology. 
		
		For this, note that $U$, $V$, $W$ are spaces of compact type by \cite[Proposition 1.2]{ST1}, and \cite[Proposition 1.2]{ST1} then induces a short strictly exact sequence
		\begin{equation*}
			0\to W'\to V'\to U'\to 0,
		\end{equation*}
		of nuclear Fr\'echet spaces, fitting into a commutative diagram
		\begin{equation*}
			\begin{xy}
				\xymatrix{0\ar[r]&W'\ar[r]\ar[d]&V'\ar[r]\ar[d]& U'\ar[r]\ar[d]&0\\
					0\ar[r]& \varprojlim W_i'\ar[r]& \varprojlim V'_i\ar[r]& \varprojlim U'_i.}
			\end{xy}
		\end{equation*}
		By Lemma \ref{limiso}, the first two vertical arrows are isomorphisms, and by reflexivity, it suffices to show that the third vertical arrow is also an isomorphism.
		
		Each transition map $W'_i\to W'_{i-1}$ is a surjection of Fr\'echet spaces, so $(W'_i)$ is $\varprojlim$-acyclic by \cite[p.45, Lemme 1]{Douady}. Hence $\varprojlim V'_i\to \varprojlim U'_i$ is surjective, and thus $U'\to\varprojlim U'_i$ is a continuous bijection of Fr\'echet spaces and therefore an isomorphism by the Open Mapping Theorem (\cite[Corollary 8.7]{S1}). 
	\end{proof}
	
	The above readily yields the following.
	
	\begin{cor}
		\label{procoad}
		Let $V$ be a locally analytic $H$-representation whose underlying locally convex topological $K$-vector space is of compact type. Then $V$ is ind-admissible if and only if $V'$ is a (separately continuous, nuclear Fr\'echet) $D(H)$-module which can be written as an inverse limit $V'\cong \varprojlim M_i$ with each $M_i$ a coadmissible $D(C)$-module with surjective transition maps $M_{i+1}\to M_i$.
	\end{cor}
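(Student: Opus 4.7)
The plan is to match both directions of the equivalence against the same inverse system on the dual side, using Lemma \ref{limiso} and the Schneider--Teitelbaum anti-equivalence recalled in Section 2 between admissible $C$-representations and coadmissible $D(C)$-modules.

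In the forward direction, suppose $V$ is ind-admissible, so that $V|_C \cong \varinjlim V_i$ with each $V_i$ admissible. The inclusion $V_i \hookrightarrow V_{i+1}$ is a strict closed embedding by \cite[Prop.~6.4]{ST2}, so by the Hahn--Banach theorem (valid as $K$ is spherically complete) the transposed map $V'_{i+1} \twoheadrightarrow V'_i$ is surjective. Each $V'_i$ is a coadmissible $D(C)$-module, and Lemma \ref{limiso} gives $V' \cong \varprojlim V'_i$ as Fr\'echet spaces; the $D(H)$-action on $V'$ is separately continuous thanks to $V$ being of compact type. Setting $M_i := V'_i$ yields the desired presentation.

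For the converse, assume $V' \cong \varprojlim M_i$ with $M_i$ coadmissible and transition maps surjective. By the anti-equivalence, write $M_i = V_i'$ for an admissible $C$-representation $V_i$, and the surjection $M_{i+1} \twoheadrightarrow M_i$ corresponds to a closed embedding $V_i \hookrightarrow V_{i+1}$. Setting $W := \varinjlim V_i$, we obtain an ind-admissible $C$-representation, and Lemma \ref{limiso} applied to $W$ gives $W' \cong \varprojlim V'_i = \varprojlim M_i \cong V'$ as Fr\'echet spaces, the isomorphism respecting the $D(C)$-module structures by construction. Since both $V$ and $W$ are of compact type and hence reflexive by \cite[Thm.~1.1]{ST1}, dualizing once more yields a topological $C$-equivariant isomorphism $V|_C \cong W$, so $V$ is ind-admissible.

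The main technical point is to confirm that the isomorphism $V' \cong W'$ of coadmissible $D(C)$-modules does indeed upgrade to a topological $C$-equivariant isomorphism on the level of representations; this amounts to the statement that the strong dual functor is fully faithful on compact-type locally analytic $C$-representations, which is precisely the content of the anti-equivalence recalled in Section 2. No $H$-equivariance needs to be checked for this upgrade, since ind-admissibility is a condition on the restriction $V|_C$ only.
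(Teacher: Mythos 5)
Your argument is correct and matches what the paper leaves implicit: the corollary is asserted to follow "readily" from Lemma \ref{limiso} and Corollary \ref{closed} via the Schneider--Teitelbaum anti-equivalence, and you have simply spelled out both directions explicitly (surjectivity of dual transition maps via Hahn--Banach/strictness in the forward direction, and reflexivity of compact-type spaces to dualize back in the converse). The only cosmetic remark is that the surjectivity of $V'_{i+1}\to V'_i$ can be cited directly from the strict exactness of the dual sequence in \cite[Prop.~1.2]{ST1}, which is what the paper implicitly relies on, but your Hahn--Banach phrasing is equivalent since $K$ is assumed spherically complete in the appendix.
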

		
		
	
	Accordingly, we make the following definition.
	
	\begin{dfn}
		A nuclear Fr\'echet $D(C)$-module $M$ is called \textbf{pro-coadmissible} if $M\cong \varprojlim M_i$, where each $M_i$ is a coadmissible $D(C)$-module and each transition map $M_{i+1}\to M_i$ is surjective.
		
		A separately continuous, nuclear Fr\'echet $D(H)$-module is called pro-coadmissible if it is pro-coadmissible as a $D(C)$-module.
	\end{dfn}
	
	We will prove later that the notion of a pro-coadmissible $D(H)$-module does not depend on the choice of compact open subgroup $C$.
	
	Corollary \ref{procoad} then says that the duality in \cite[Corollary 3.3]{ST1} induces an anti-equivalence between ind-admissible $H$-representations and pro-coadmissible $D(H)$-modules.
	
	Note that by Lemma \ref{limiso} and \cite[Proposition 16.10]{S1}, if $M$ is a pro-coadmissible $D(C)$-module, then
	\begin{equation*}
		M\cong \varprojlim_i (D_i(C)\h{\otimes}_{D(C)} M_i).
	\end{equation*}
	
	It is occasionally advantageous to use the $D(C)$-module perspective, as we can regard pro-coadmissible modules (which are by definition continuous $D(C)$-modules on nuclear Fr\'echet spaces) as special cases of complete bornological $D(C)$-modules. For example, this allows us to exploit a well-behaved theory of completed tensor products: The category $\h{\B}c_K$ of complete bornological $K$-vector spaces is closed symmetric monoidal, unlike e.g. the category $LCVS_K$ of locally convex topological $K$-vector spaces.
	
	We refer to \cite{SixOp} for preliminaries on the category $\mathrm{Mod}_{\h{\B}c_K}(D(C))$ of complete bornological $D(C)$-modules.
	
	Recall from \cite[subsection 4.1]{SixOp} that there exists a bornologification functor 
	\begin{equation*}
		(-)^b: LCVS_K\to \B c_K
	\end{equation*}
	from locally convex topological $K$-vector spaces to bornological vector spaces.
	
	When restricted to nuclear Fr\'echet spaces, $(-)^b$ is exact and fully faithful (see \cite[Lemma 4.3]{SixOp} for full faithfulness and e.g. \cite[Proposition 5.12]{SixOp} for exactness), with a quasi-inverse on the essential image given by the functor $(-)^t$.
	
	Let $\mathrm{Mod}_{\h{\B}c_K}^{\mathrm{nF}}(D(C))$ denote the full subcategory of $\mathrm{Mod}_{\h{\B}c_K}(D(C))$ whose underlying complete bornological vector space is of the form $E^b$ for some nuclear Fr\'echet space $E$. 
	
	\begin{lemma}
		\label{bembedding}
		The bornologification functor $(-)^b$ induces a fully faithful functor
		
		\begin{equation*}
			(-)^b: 
			\left\{\begin{matrix}
				\text{continuous $D(C)$-modules}\\
				\text{on nuclear Fr\'echet spaces}\\
				\text{with continuous $D(C)$-module morphisms}
			\end{matrix}\right\}\to \mathrm{Mod}_{\h{\B}c_K}(D(C))
		\end{equation*}
		with essential image $\mathrm{Mod}_{\h{\B}c_K}^{\mathrm{nF}}(D(C))$.
		
		If $M$ is nuclear Fr\'echet $D(C)$-module and $N$ is a closed submodule (so that $N$ and $M/N$ are nuclear Fr\'echet by \cite[Proposition 19.4]{S1}), then
		\begin{equation*}
			0\to N^b\to M^b\to (M/N)^b\to 0
		\end{equation*}
		is strictly exact.
	\end{lemma}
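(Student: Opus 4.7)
The plan is to leverage the fact, cited from \cite{SixOp}, that $(-)^b$ is already fully faithful and exact on the full subcategory of nuclear Fr\'echet spaces, with quasi-inverse $(-)^t$; both assertions of the lemma should then be reduced to questions about bare locally convex spaces by transferring the module structure along this equivalence. The key ingredient I would need is that $(-)^b$ is compatible with the relevant monoidal structures: since $D(C)$ is itself nuclear Fr\'echet (it is the projective limit of the Banach algebras $D_n(C)$ in the Fr\'echet--Stein presentation), the completed projective tensor product of nuclear Fr\'echet spaces is sent by $(-)^b$ to the completed bornological tensor product in $\widehat{\B}c_K$; this either appears explicitly in \cite{SixOp} or is immediate from the fact that on nuclear Fr\'echet spaces bounded and continuous bilinear maps agree.

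For the first assertion, I would first check that given a continuous $D(C)$-action on a nuclear Fr\'echet space $E$, the bornologification $E^b$ inherits a $D(C)$-module structure in $\widehat{\B}c_K$: the multiplication map $D(C)\widehat{\otimes}_K E\to E$ is sent by $(-)^b$ to a bounded map $D(C)^b\widehat{\otimes}_{\widehat{\B}c_K} E^b\to E^b$, and the associativity and unit diagrams transfer because they involve only morphisms in the essential image of $(-)^b$, where $(-)^b$ is fully faithful. Full faithfulness on morphisms is then immediate, since a map $E\to E'$ of nuclear Fr\'echet $D(C)$-modules is continuous and $D(C)$-linear iff it is bounded and $D(C)^b$-linear, once again by full faithfulness at the level of underlying spaces. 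For the essential image, given $N\in \mathrm{Mod}_{\widehat{\B}c_K}^{\mathrm{nF}}(D(C))$ with underlying object $E^b$, applying $(-)^t$ to the action morphism produces a continuous $D(C)$-action on $E$, and its axioms follow by running the same full-faithfulness argument in reverse.

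For the second assertion, I would invoke the fact that on nuclear Fr\'echet spaces $(-)^b$ is exact (cited via \cite[Proposition 5.12]{SixOp}), which immediately yields exactness of $0\to N^b\to M^b\to (M/N)^b\to 0$ as a sequence of complete bornological $K$-vector spaces. Strictness in $\widehat{\B}c_K$ then means the bornology on $N^b$ is the subspace bornology from $M^b$ and that on $(M/N)^b$ is the quotient bornology; both follow from the fact that the embedding $N\hookrightarrow M$ and the quotient $M\twoheadrightarrow M/N$ are strict maps of nuclear Fr\'echet spaces (by \cite[Proposition 19.4]{S1}) together with compatibility of $(-)^b$ with subspaces and Hausdorff quotients on nuclear Fr\'echet spaces. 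The main bookkeeping obstacle, as I see it, is not any of these individual points but rather the consistent translation between the topological and bornological formulations of module structures; this should be absorbed cleanly by the monoidal-equivalence remark above, so that the proof is largely a transfer-of-structure argument rather than a technical calculation.
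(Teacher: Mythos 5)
Your proposal is correct and follows essentially the same route as the paper: both hinge on the tensor-product compatibility $(D(C)\widehat{\otimes}_K M)^b\cong D(C)^b\widehat{\otimes}_K M^b$ for nuclear Fr\'echet $M$, together with full faithfulness and exactness of $(-)^b$ on nuclear Fr\'echet spaces and the quasi-inverse $(-)^t$ to identify the essential image. The only slight imprecision is your suggestion that the tensor-product identity is "immediate from the fact that on nuclear Fr\'echet spaces bounded and continuous bilinear maps agree" --- the paper instead cites that the completed projective tensor product of nuclear Fr\'echet spaces is again nuclear Fr\'echet (Schikhof) and a specific compatibility result from \cite{SixOp}; but you flag that the fact should be looked up in \cite{SixOp}, which is where it lives.
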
 
	\begin{proof}
		By \cite[Theorem 8.5.7, Theorem 10.3.13, Theorem 10.4.4]{Schikhof} and \cite[Corollary 5.20]{SixOp}, if $M$ is a nuclear Fr\'echet $K$-vector space, then $D(C)\h{\otimes}_K M$ is a nuclear Fr\'echet space and
		\begin{equation*}
			(D(C)\h{\otimes}_K M)^b\cong D(C)^b\h{\otimes}_K M^b.
		\end{equation*}
		Thus $(-)^b$ induces a functor between the module categories, which inherits exactness and fully faithfulness from $(-)^b$ (since morphisms in the module category are morphisms in $\h{\B}c_K$ fitting into suitable commutative diagrams, using the equality of tensor products above).
		
		Moreover, 
		\begin{equation*}
			((D(C)\h{\otimes}_K M)^b)^t\cong D(C)\h{\otimes}_K M
		\end{equation*}
		by \cite[Lemma 4.3]{SixOp}, so specifying a continuous $D(C)$-module structure on $M$ is equivalent to specifying a bounded action on $M^b$. Hence $(-)^b$ has essential image $\mathrm{Mod}_{\h{\B}c_K}^{\mathrm{nF}}(D(C))$. 
	\end{proof}
	
	We usually suppress the $(-)^b$ from the notation and from now on regard pro-coadmissible modules as certain complete bornological modules.
	
	\begin{lemma}
		\label{changeton}
		Let $M\cong \varprojlim M_i$ be a pro-coadmissible $D(C)$-module. Then the natural morphism
		\begin{equation*}
			D_n(C)\h{\otimes}_{D(C)} M\to \varprojlim_i (D_n(C)\h{\otimes}_{D(C)} M_i)
		\end{equation*}
		is an isomorphism in $\mathrm{Mod}_{\h{\B}c_K}(D(C))$.
		
		In particular, the morphism $M\to \varprojlim_n (D_n(C)\h{\otimes}_{D(C)}M)$ is an isomorphism.
	\end{lemma}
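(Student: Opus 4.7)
The plan is to construct the natural map explicitly, then verify it is an isomorphism by comparing universal properties, and finally to deduce the ``in particular'' part by iterating limits. The natural morphism is built as follows: each projection $M \to M_i$ is a bounded $D(C)$-linear map, which by functoriality induces $D_n(C) \h{\otimes}_{D(C)} M \to D_n(C) \h{\otimes}_{D(C)} M_i$; since $M_i$ is coadmissible, $M_{i,n} := D_n(C) \h{\otimes}_{D(C)} M_i$ is a finitely generated Banach $D_n(C)$-module by Fr\'echet--Stein theory, and compatibility of these maps in $i$ yields the desired morphism $\phi: D_n(C) \h{\otimes}_{D(C)} M \to \varprojlim_i M_{i,n}$.

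Next I would verify that the inverse system $(M_{i,n})_i$ is well-behaved. The transition $M_{i+1} \to M_i$ is a surjection of coadmissible $D(C)$-modules, and by Fr\'echet--Stein flatness of $D_n(C)$ over $D(C)$ (\cite[Remark 3.2]{ST2}), the induced $M_{i+1, n} \to M_{i,n}$ is also surjective. Hence $\varprojlim_i M_{i,n}$ is a Fr\'echet $D_n(C)$-module satisfying Mittag--Leffler. In particular the canonical map $M \to \varprojlim_i M_{i,n}$ has dense image, since its projection to each $M_{i,n}$ factors as the surjection $M \twoheadrightarrow M_i$ (from Mittag--Leffler applied to $M$ itself) followed by the dense inclusion $M_i \hookrightarrow M_{i,n}$ coming from coadmissibility.

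The main step is to show $\phi$ is an isomorphism in $\mathrm{Mod}_{\h{\B}c_K}(D(C))$. I would do this by matching universal properties. By the standard adjunction for the completed bornological tensor product, for any bornological $D_n(C)$-module $P$ one has
\begin{equation*}
\mathrm{Hom}_{D_n(C)}\bigl(D_n(C) \h{\otimes}_{D(C)} M, P\bigr) \;=\; \mathrm{Hom}_{D(C)}(M, P) .
\end{equation*}
The task is thus to show that $\varprojlim_i M_{i,n}$ represents the same functor, i.e.\ that any bounded $D(C)$-linear map $f: M \to P$ extends uniquely to a bounded $D_n(C)$-linear map $\bar f: \varprojlim_i M_{i,n} \to P$. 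Uniqueness follows from the density of the image of $M$ established in the previous paragraph; for existence one combines the bornological completeness of $P$ with the Mittag--Leffler structure on $(M_{i,n})_i$ to extend $f$ from the image of $M$ to the whole inverse limit. Via Lemma \ref{bembedding} this comparison can be carried out using the nuclear-Fr\'echet/bornological correspondence.

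The ``in particular'' statement then follows by iterating:
\begin{equation*}
\varprojlim_n \bigl(D_n(C) \h{\otimes}_{D(C)} M\bigr) \;\cong\; \varprojlim_n \varprojlim_i M_{i,n} \;\cong\; \varprojlim_i \varprojlim_n M_{i,n} \;\cong\; \varprojlim_i M_i \;=\; M,
\end{equation*}
where the first isomorphism is the main claim, the second swaps the double inverse limit (all transition maps are surjective, so no $\varprojlim^{1}$ obstruction arises), and the third uses coadmissibility of each $M_i$ in the form $M_i \cong \varprojlim_n M_{i,n}$. I expect the principal obstacle to be the extension step in the previous paragraph: the familiar topological density-plus-completeness argument must be transported carefully into the complete bornological setting of \cite{SixOp}, where ``bounded extension'' demands that one control the bornology on $P$ in terms of the Mittag--Leffler tower on the source, rather than merely handling seminorms.
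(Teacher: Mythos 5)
Your proposal diverges from the paper's argument at the crucial step, and it leaves a genuine gap there. The paper does not argue via the universal property of the completed tensor product; instead it computes $D_n(C)\h{\otimes}^{\mathbb{L}}_{D(C)}M$ via the Bar resolution, observes that each $M_i$ contributes a strictly exact Bar complex, and then invokes the pre-nuclearity of the system $\bigl(D_n(C)\h{\otimes}D_i(C)^{\h{\otimes}j}\h{\otimes}M_{i,i}\bigr)_i$ together with the $\varprojlim$-exactness result \cite[Corollary 5.28]{SixOp} and \cite[Corollary 5.22]{SixOp} to commute the inverse limit past the homology of the Bar complex. Your strategy bypasses all of this by asserting that $\varprojlim_i M_{i,n}$ represents the functor $\mathrm{Hom}_{D(C)}(M,-)$ on bornological $D_n(C)$-modules, but that is precisely what must be proven, and the stated ingredients do not yield it.

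Concretely, the gap is at the sentence ``for existence one combines the bornological completeness of $P$ with the Mittag--Leffler structure on $(M_{i,n})_i$ to extend $f$ from the image of $M$ to the whole inverse limit.'' In $\h{\B}c_K$ there is no general extension-by-density principle: having (topologically) dense image in a Fr\'echet space does not allow a bounded linear map into an arbitrary complete bornological module $P$ to be extended boundedly. The topological density-plus-completeness argument you gesture at requires a uniform control on $f$ over the inverse system that is not supplied by Mittag--Leffler alone, and which in turn would amount to something like nuclearity or pre-nuclearity of the tower --- i.e.\ exactly the technical input the paper's proof makes explicit. Moreover, the universal-property route implicitly requires both that $\phi$ is injective (the completed tensor product does not collapse beyond what the $M_{i,n}$ see) and surjective (no extra elements appear in the limit); neither is addressed, and both are nontrivial since $D_n(C)\h{\otimes}_{D(C)}M$ is defined as a completion. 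You do flag this as ``the principal obstacle'' at the end, but acknowledging it is not the same as closing it: as written, the central claim of the lemma is left unproven. The surrounding material --- surjectivity of the transition maps $M_{i+1,n}\to M_{i,n}$ via flatness, and the limit-swapping argument for the ``in particular'' part --- is fine once the main isomorphism is granted.
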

	\begin{proof}
		By \cite[Lemma 5.31]{SixOp}, $D_n(C)\h{\otimes}^{\mathbb{L}}_{D(C)}M \in \mathrm{D}(\mathrm{Mod}_{\h{\B}c_K}(D_n(C)))$ can be computed via the complex
		\begin{equation*}
			\hdots\to D_n(C)\h{\otimes}_K D(C)^{\h{\otimes}j}\h{\otimes}_K M\to D_n(C)\h{\otimes}_K D(C)^{\h{\otimes}j-1}\h{\otimes}_K M\to \hdots \to D_n(C)\h{\otimes}_K M\to 0
		\end{equation*} 
		obtained from the Bar resolution of $M$.
		
		Since each $M_i$ is a coadmissible $D(C)$-module, $D_n(C)\h{\otimes}_{D(C)}M_i$ is simply the bornologification of the finitely generated $D_n(C)$-module $M_{i, n}=D_n(C)\otimes_{D(C)} M_i$ from before (see \cite[Corollary 5.38]{SixOp}).
		
		It also follows from \cite[Lemma 5.32]{SixOp} that the complex
		\begin{equation*}
			\hdots \to D_n(C)\h{\otimes}_K D_i(C)^{\h{\otimes}j}\h{\otimes}_K M_{i, i}\to \hdots \to D_n(C)\h{\otimes}_{D_i(C)} M_{i, i}\to 0 
		\end{equation*}
		is strictly exact.
		
		For each $i$, the inverse system $(D_n(C)\h{\otimes} D_i(C)^{\h{\otimes} j} \h{\otimes} M_{i, i})_i$ is a pre-nuclear system of Banach spaces with dense images as in \cite[Definition 5.24]{SixOp}, since the maps $D_i(C)\to D_{i-1}(C)$ and $M_{i, i}\to M_{i-1, i-1}$ are compact for each $i$.
		
		Hence \cite[Corollary 5.28]{SixOp} implies that
		\begin{equation*}
			\hdots \to\varprojlim_i (D_n(C)\h{\otimes} D_i(C)^{\h{\otimes}j}\h{\otimes} M_{i, i})\to \hdots \to \varprojlim_i (M_{i, n})\to 0
		\end{equation*}
		is strictly exact.
		
		But by \cite[Corollary 5.22]{SixOp}, we now have
		\begin{equation*}
			\varprojlim_i (D_n(C)\h{\otimes}_K D_i(C)^{\h{\otimes}j}\h{\otimes}_K M_{i, i})\cong D_n(C)\h{\otimes}_K D(C)^{\h{\otimes}j} \h{\otimes}_KM,
		\end{equation*}
		so the result follows.
		
		Taking the limit over $n$ and applying Lemma \ref{limiso} yields the final isomorphism
		\begin{equation*}
			M\cong \varprojlim_n (D_n(C)\h{\otimes}_{D(C)} M).
		\end{equation*}
	\end{proof}
	
	The lemma above allows us to deduce several useful corollaries.
	
	\begin{cor}
		\label{Dnflat}
		Let
		\begin{equation*}
			0\to M\to M'\to M''\to 0
		\end{equation*}
		be a short strictly exact sequence of pro-coadmissible $D(C)$-modules. Then
		\begin{equation*}
			0\to D_n(C)\h{\otimes}_{D(C)} M\to D_n(C)\h{\otimes}_{D(C)} M'\to D_n(C)\h{\otimes}_{D(C)} M''\to 0
		\end{equation*}
		is short strictly exact.
	\end{cor}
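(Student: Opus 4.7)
The plan is to reduce to short strictly exact sequences of coadmissible modules at each level of a compatible pro-coadmissible presentation, apply exactness of $D_n(C)\h{\otimes}_{D(C)}(-)$ on coadmissible modules (from the Fr\'echet--Stein property of $D(C)$), and pass to the inverse limit using Lemma \ref{changeton} together with a Mittag-Leffler argument.

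First I would construct a compatible presentation. Fix a pro-coadmissible presentation $M' \cong \varprojlim M'_i$ with surjective transition maps. For each $i$, let $M_i$ denote the closure of the image of $M$ in $M'_i$; as a closed $D(C)$-submodule of the coadmissible module $M'_i$, it is itself coadmissible by \cite[Lemma 3.6]{ST2}. Set $M''_i := M'_i/M_i$, which is again coadmissible. By construction the image of $M$ is dense in $M_i$, and strictness of morphisms between coadmissible modules (part of the Fr\'echet--Stein machinery of \cite{ST2}) implies that the induced transition maps $M_{i+1}\twoheadrightarrow M_i$ and $M''_{i+1}\twoheadrightarrow M''_i$ are surjective. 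Applying Douady's acyclicity criterion (as in the proof of Corollary \ref{closed}) to the level-wise short strictly exact sequences $0\to M_i\to M'_i\to M''_i\to 0$ yields the short strictly exact sequence
\begin{equation*}
0\to \varprojlim M_i \to M' \to \varprojlim M''_i \to 0.
\end{equation*}
Since $M$ is closed in $M'$ and its image is dense in each $M_i$, one checks that $\varprojlim M_i = M$ as closed subspaces of $M'$, and hence $\varprojlim M''_i = M''$.

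Next I would apply $D_n(C)\h{\otimes}_{D(C)}(-)$ level-wise. The Fr\'echet--Stein property of $D(C)$ guarantees that $D_n(C)\h{\otimes}_{D(C)}(-)$ is exact on short strictly exact sequences of coadmissible $D(C)$-modules (compare the description of the $M_{i,n}$ in the proof of Lemma \ref{changeton}). We therefore obtain short strictly exact sequences
\begin{equation*}
0\to D_n(C)\h{\otimes}_{D(C)} M_i \to D_n(C)\h{\otimes}_{D(C)} M'_i \to D_n(C)\h{\otimes}_{D(C)} M''_i\to 0
\end{equation*}
of bornologifications of finitely generated $D_n(C)$-Banach modules, and the transition maps in $i$ remain surjective.

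Finally I would pass to the inverse limit over $i$. Surjectivity of the transition maps of Banach spaces together with the classical Mittag-Leffler principle ensures that the limit sequence is again short strictly exact, and Lemma \ref{changeton} identifies each term of the limit with the corresponding $D_n(C)\h{\otimes}_{D(C)}(-)$ applied to $M$, $M'$, $M''$ respectively, yielding the desired sequence. The main technical point I expect to be the identification $M\cong \varprojlim M_i$ from step one; once this is secured, the remainder is a routine assembly of Fr\'echet--Stein flatness with Mittag-Leffler.
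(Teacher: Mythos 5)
Your argument is correct, but it is a genuinely different route from the paper's. The paper's proof of Corollary \ref{Dnflat} is a one-liner: the proof of Lemma \ref{changeton} has already established that $D_n(C)\h{\otimes}^{\mathbb{L}}_{D(C)} M \cong D_n(C)\h{\otimes}_{D(C)} M$ for every pro-coadmissible $M$ (the completed Bar complex is shown to be a strict resolution), so exactness is immediate from the vanishing of the higher derived functors. You instead build a compatible level-wise presentation of the short exact sequence, apply flatness of $D_n(C)$ over $D(C)$ on each coadmissible layer, and pass to the inverse limit via Douady/Mittag-Leffler. What the paper's route buys is brevity, at the price of relying on the derived-category framework for complete bornological modules from \cite{SixOp}. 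What your route buys is a more elementary, self-contained argument that avoids derived functors entirely; as a byproduct you essentially reprove the dual of Corollary \ref{closed} and a simple instance of the compatible-presentation technique that the paper develops more systematically in Proposition \ref{levelwise}.

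One place where you should expand the "one checks": you need $\varprojlim M_i = M$, where $M_i$ is the closure of the image of $M$ in $M'_i$. The argument is as follows. Both $M$ and $N := \varprojlim M_i$ are closed in $M'$, and $M\subseteq N$. The topology on $N$ is the initial topology with respect to the projections $N\to M_i$, so a cofinal family of neighbourhoods of any $x\in N$ is of the form $\pi_i^{-1}(V)\cap N$ for a single $i$ and $V$ open in $M_i$ containing $\pi_i(x)$. Since $\pi_i(M)$ is dense in $M_i$, each such neighbourhood meets $M$, so $M$ is dense in $N$; as $M$ is closed in $M'$ and hence in $N$, we get $M=N$. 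Also note that the surjectivity of the transition maps $M_{i+1}\to M_i$ follows precisely because morphisms of coadmissible $D(C)$-modules have closed image \cite[Prop.\ 3.6]{ST2} together with density of $\pi_i(M)$ in $M_i$; this should be spelled out rather than just invoked. With these points made explicit, your proof is complete and does not create a circular dependency, since it relies only on Douady's lemma, the Fr\'echet--Stein flatness of $D_n(C)$ over $D(C)$, and Lemma \ref{changeton}, none of which presuppose the corollary being proved.
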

	\begin{proof}
		We have shown above that $D_n(C)\h{\otimes}_{D(C)}^{\mathbb{L}}M\cong D_n(C)\h{\otimes}_{D(C)} M$ for any pro-coadmissible $D(C)$-module $M$.
	\end{proof}
	
	\begin{cor}
		\label{coadcheck}
		A pro-coadmissible module $M$ is coadmissible if and only if the $D_n(C)$-module $D_n(C)\h{\otimes}_{D(C)}M$ is finitely generated and equipped with its canonical Banach structure for each $n$.
	\end{cor}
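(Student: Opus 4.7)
The plan is to handle the two directions separately. The forward direction is essentially tautological once one identifies the bornological completed tensor product with the classical one for coadmissible modules; the reverse direction requires assembling the hypotheses into a coherent Fr\'echet--Stein system.

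For the forward direction, suppose $M$ is coadmissible. By the Schneider--Teitelbaum definition, the classical $M_n := D_n(C) \otimes_{D(C)} M$ is a finitely generated $D_n(C)$-module carrying its canonical Banach topology, and $M \cong \varprojlim_n M_n$ in the associated Fr\'echet topology. I would then invoke \cite[Corollary 5.38]{SixOp} (as used in the proof of Lemma \ref{changeton}) to identify the bornological completed tensor product $D_n(C) \h{\otimes}_{D(C)} M$ with the bornologification of $M_n$, which is precisely the desired finitely generated Banach $D_n(C)$-module with canonical structure.

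For the reverse direction, assume each $M_n := D_n(C) \h{\otimes}_{D(C)} M$ is finitely generated over $D_n(C)$ with its canonical Banach structure. By Lemma \ref{changeton}, we already have $M \cong \varprojlim_n M_n$, which identifies the nuclear Fr\'echet topology on $M$ with the inverse limit of the canonical Banach topologies. It remains to verify the Fr\'echet--Stein compatibility $D_n(C) \h{\otimes}_{D_{n+1}(C)} M_{n+1} \cong M_n$. This follows from associativity of the completed tensor product:
$$D_n(C) \h{\otimes}_{D_{n+1}(C)} M_{n+1} \;=\; D_n(C) \h{\otimes}_{D_{n+1}(C)} \bigl(D_{n+1}(C) \h{\otimes}_{D(C)} M\bigr) \;\cong\; D_n(C) \h{\otimes}_{D(C)} M \;=\; M_n.$$
Combined with the finite generation and the inverse limit presentation, this shows that $(M_n)_n$ is a coherent Fr\'echet--Stein family with global sections $M$, so $M$ is coadmissible.

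The main subtlety is bookkeeping around topologies and bornologies rather than a genuine conceptual obstacle: one must ensure that the associativity isomorphism of the completed tensor product respects the bornological (equivalently, Banach) structure, and that the canonical Fr\'echet topology on a coadmissible module coincides with the inverse limit topology obtained from Lemma \ref{changeton}. Both of these are provided by the machinery of \cite{SixOp} already invoked in the proofs of Lemma \ref{changeton} and Corollary \ref{Dnflat}, so no new analytic input beyond those results is required.
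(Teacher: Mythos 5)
Your proof is correct, and it fills in the argument that the paper leaves implicit (the paper states Corollary \ref{coadcheck} without proof, as an immediate consequence of Lemma \ref{changeton}). Both directions are handled in the way the paper's surrounding lemmas suggest: the forward direction via \cite[Corollary 5.38]{SixOp} identifying $D_n(C)\h{\otimes}_{D(C)}M$ with the bornologification of the classical $D_n(C)\otimes_{D(C)}M$, and the reverse direction via the inverse-limit presentation $M\cong\varprojlim_n(D_n(C)\h{\otimes}_{D(C)}M)$ from Lemma \ref{changeton} together with associativity of the completed tensor product to produce the Fr\'echet--Stein coherence isomorphisms.
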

	
	\begin{cor}
		\label{coadinj}
		Let $N$ be a coadmissible $D(C)$-module and $M$ a pro-coadmissible $D(C)$-module. If there exists an injective morphism $j: M\to N$, then $M$ is coadmissible and $j$ is strict.
	\end{cor}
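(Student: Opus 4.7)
The plan is to verify coadmissibility of $M$ via Corollary \ref{coadcheck}: it suffices to show that for every $n$ the module $M[n]:=D_n(C)\hat\otimes_{D(C)}M$ is a finitely generated Banach $D_n(C)$-module. The injection $j$ induces $D_n(C)$-linear maps $\phi_n:M[n]\to N_n$, where $N_n:=D_n(C)\hat\otimes_{D(C)}N$ is finitely generated Banach by coadmissibility of $N$.

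I would first reduce to the case that $j$ has dense image: the closure $\overline{j(M)}$ is a closed $D(C)$-submodule of $N$, hence coadmissible (as closed submodules of coadmissible modules are coadmissible, by the dual form of Corollary \ref{closed} combined with standard Fr\'echet--Stein theory). Proving the result with $N$ replaced by $\overline{j(M)}$ is sufficient, since the inclusion $\overline{j(M)}\hookrightarrow N$ is itself strict. Under this reduction, each $\phi_n$ has dense image; by noetherianity of $D_n(C)$, every $D_n(C)$-submodule of the finitely generated Banach module $N_n$ is closed with its canonical Banach structure, so the image of $\phi_n$, being dense and closed, equals $N_n$. Hence $\phi_n$ is surjective.

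The crux is showing that each $\phi_n$ is injective. Writing $\alpha_n:M\to M[n]$ and $\beta_n:N\to N_n$, the injectivity of $j$ together with $\bigcap_n\ker\beta_n=0$ (from $N\cong\varprojlim_n N_n$) at once yields $\bigcap_n\alpha_n^{-1}(\ker\phi_n)=j^{-1}(\bigcap_n\ker\beta_n)=0$ in $M$. The main obstacle is upgrading this intersection statement to $\ker\phi_n=0$ for each individual $n$. For this I would combine Lemma \ref{changeton}, which realises $M[n]\cong\varprojlim_i M_i[n]$ as an inverse limit of finitely generated Banach $D_n(C)$-modules with surjective transitions (applying Corollary \ref{Dnflat} to the closed pro-coadmissible submodules $\ker(M\to M_i)$), with a descent through the two inverse systems: a hypothetical nonzero element of $\ker\phi_n$ should lift compatibly to a nonzero element of $\bigcap_m\alpha_m^{-1}(\ker\phi_m)$, contradicting the previous observation. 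Working out this delicate coordination between the pro-system for $M$ and the inverse limit defining $N$ is where the real work lies.

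Granting injectivity, each $\phi_n$ is a continuous bijection from the Fr\'echet space $M[n]$ onto the Banach space $N_n$. The open mapping theorem \cite[Proposition~8.6]{S1} then gives that $\phi_n$ is a topological isomorphism of $D_n(C)$-modules, so $M[n]$ is a finitely generated Banach $D_n(C)$-module. Corollary \ref{coadcheck} yields that $M$ is coadmissible, and passing to the inverse limit provides $M\cong\overline{j(M)}$, so $j(M)=\overline{j(M)}$ is closed in $N$ and $j$ factors as a topological isomorphism $M\cong j(M)$ followed by the closed embedding $j(M)\hookrightarrow N$; hence $j$ is strict.
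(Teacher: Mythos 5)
The crux of your argument---injectivity of each $\phi_n: D_n(C)\h{\otimes}_{D(C)}M\to D_n(C)\h{\otimes}_{D(C)}N$---is left as a genuine gap, and you acknowledge as much. Your proposed "descent through the two inverse systems" does not obviously go through: a hypothetical nonzero element of $\ker\phi_n$ lives in the completed tensor product $D_n(C)\h{\otimes}_{D(C)}M$, not in $M$, and there is no reason it should lift to an element of $M$ that lies in $\bigcap_m\alpha_m^{-1}(\ker\phi_m)$; indeed the worry is precisely that completion could destroy injectivity, and your observation that $\bigcap_n\ker\beta_n=0$ only controls elements coming from $M$ itself.

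The paper takes a different and cleaner route that sidesteps the completion issue entirely. It uses the standard Fr\'echet--Stein fact that $D_n(C)$ is flat as an \emph{abstract} (uncompleted) $D(C)$-module, so the map of abstract tensor products $D_n(C)\otimes_{D(C)}M\to D_n(C)\otimes_{D(C)}N$ is injective. Since the target is finitely generated over the noetherian algebra $D_n(C)$, so is the source; a finitely generated submodule of a finitely generated module over a noetherian Banach algebra carries its canonical Banach structure, and one checks (via a bornological comparison of topologies, using that a bounded bijection of Banach spaces has a bounded inverse) that this Banach structure agrees with the tensor-product bornology, so $D_n(C)\otimes_{D(C)}M$ is already complete and hence equals $D_n(C)\h{\otimes}_{D(C)}M$. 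Corollary \ref{coadcheck} then applies directly. In short: where you try to prove injectivity after completing, the paper proves injectivity before completing (where flatness gives it for free) and then shows completion changes nothing. Your reduction to dense image and the surjectivity observation are fine but unnecessary for the paper's argument. To repair your proof you would essentially need to import the paper's flatness-plus-noetherianity step, at which point your Fr\'echet/open-mapping machinery becomes superfluous.
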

	
	\begin{proof}
		
		It suffices to show that $M$ is coadmissible, the strictness of the morphism then follows from \cite[Corollary 5.14]{SixOp}. 
		
		Since $D_n(C)$ is flat as an abstract $D(C)$-module, $j$ induces an injection
		\begin{equation*}
			D_n(C)\otimes_{D(C)} M\to D_n(C)\otimes_{D(C)} N.
		\end{equation*}
		But the right-hand side is a finitely generated $D_n(C)$-module, so $D_n(C)\otimes_{D(C)}M$ is also finitely generated and the morphism is strict when both sides are equipped with their canonical Banach structure.
		
		But now the natural bounded bijection
		\begin{equation*}
			(D_n(C)\otimes M, \tau_1)\to (D_n(C)\otimes M, \tau_2),
		\end{equation*}
		where $\tau_1$ denotes the tensor product bornology and $\tau_2$ the subspace bornology from $D_n(C)\otimes N$ (i.e. the canonical Banach structure), has a bounded inverse by \cite[Lemma 4.27.(ii)]{SixOp}, so it is an isomorphism. In particular, $D_n(C)\otimes_{D(C)} M$ is already complete, and
		\begin{equation*}
			D_n(C)\h{\otimes}_{D(C)} M\cong D_n(C)\otimes_{D(C)} M
		\end{equation*}
		is indeed a finitely generated $D_n(C)$-module, endowed with its canonical Banach structure. 
		
		We can thus apply Corollary \ref{coadcheck}.
	\end{proof}
	
	\begin{cor}
		Let $N$ be a coadmissble $D(C)$-module and $M$ a pro-coadmissible $D(C)$-module. Any morphism $M\to N$ is strict.
	\end{cor}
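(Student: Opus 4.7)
The plan is to factor $f$ through its kernel and then invoke Corollary \ref{coadinj}. Concretely, set $K = \ker(f) \subset M$; since $N$ is Hausdorff and $f$ is bounded, $K$ is a closed $D(C)$-submodule of $M$. The morphism $f$ then factors as $f = \bar{f} \circ \pi$, where $\pi : M \twoheadrightarrow M/K$ is the canonical quotient and $\bar{f} : M/K \hookrightarrow N$ is the induced injection. The first task is to show that $M/K$ is pro-coadmissible, at which point Corollary \ref{coadinj} applies to $\bar{f}$ and yields that $M/K$ is in fact coadmissible and that $\bar{f}$ is strict. Combined with the strictness of the quotient map $\pi$ (which holds by the universal property of cokernels in the bornological category, so that $M/K$ carries the quotient bornology from $M$), we conclude that $f = \bar{f} \circ \pi$ is strict.

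The only non-formal step is verifying that $M/K$ is pro-coadmissible. The cleanest route is via duality: by the anti-equivalence of Corollary \ref{procoad}, pro-coadmissible $D(C)$-modules correspond to ind-admissible $C$-representations, and under this correspondence the closed submodule $K \subset M$ corresponds to the closed subrepresentation $K^{\perp} \subset M'$ consisting of those continuous linear forms vanishing on $K$. By Corollary \ref{closed}, $K^{\perp}$ is ind-admissible, and dualizing back yields that $M/K \cong (K^{\perp})'$ is pro-coadmissible. Alternatively, one may argue directly: writing $M \cong \varprojlim M_i$ with each $M_i$ coadmissible and each transition map surjective, set $K_i \subset M_i$ to be the closure of the image of $K$. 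By \cite[Lemma 3.6]{ST2} each $K_i$ is coadmissible, hence so is $M_i/K_i$, with surjective transition maps, and the identification $M/K \cong \varprojlim M_i/K_i$ follows from $\varprojlim$-acyclicity of surjective systems of Fréchet spaces (\cite[p.45, Lemme 1]{Douady}).

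No serious obstacle is anticipated: the argument is essentially formal once Corollary \ref{coadinj} is in place. The one point requiring slight care is the compatibility between the topological quotient $M/K$ and its presentation as $\varprojlim M_i/K_i$ in the bornological category, but this is built into the framework via Lemma \ref{bembedding}: both are nuclear Fréchet modules, and the bornologification functor is exact and fully faithful on this category, so the identification automatically transfers from the Fréchet to the complete bornological setting. Once the factorization $f = \bar{f} \circ \pi$ with $\bar{f}$ and $\pi$ both strict is obtained, strictness of $f$ follows because the image bornology of $f$ on $f(M) = \bar{f}(M/K)$ factors as the quotient bornology on $M/K$ transported by $\bar{f}$, which equals the subspace bornology from $N$ by strictness of $\bar{f}$.
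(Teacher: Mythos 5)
Your proof is correct and follows the same route as the paper: the kernel is closed since $N$ is Fr\'echet, so the coimage $M/\ker f$ is pro-coadmissible and injects into $N$, and Corollary \ref{coadinj} then gives both coadmissibility of the coimage and strictness of the injection, whence $f$ is strict. The paper asserts the pro-coadmissibility of $M/\ker f$ without comment (it is the dual, via Corollary \ref{procoad}, of Corollary \ref{closed}), whereas you spell this step out explicitly; this is a reasonable elaboration, and your primary duality argument for it is sound.
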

	\begin{proof}
		Let $f: M\to N$ be a morphism in $\mathrm{Mod}_{\h{\B}c_K}(D(C))$. Since $N$ is Fr\'echet, the kernel of $f$ is closed. Hence $\mathrm{coim}f= M/\mathrm{ker}f$ is pro-coadmissible and injects into $N$.
		
		By the above, $\mathrm{coim}f$ is coadmissible and the inclusion $\mathrm{coim}f\to N$ is strict. In particular, $f$ is strict.
	\end{proof}

	\begin{prop}
		\label{levelwise}
		Let $f: M\to N$ be a morphism of pro-coadmissible $D(C)$-modules. Then one can choose presentations $M\cong \varprojlim M_i$, $N\cong \varprojlim N_i$ such that $f$ is induced by a family of morphisms $M_i\to N_i$.
	\end{prop}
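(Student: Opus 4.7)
The plan is to leave the presentation of $N$ untouched and enlarge the presentation of $M$ to incorporate data coming from $f$. A naive attempt to factor $f_j := p_j \circ f : M \to N_j$ through some $M_i$ fails in general, as there is no reason for $\ker(\phi_i)$ to lie in $\ker(f_j)$ for any pair of indices, where $\phi_i$ and $p_j$ denote the projections from $M\cong\varprojlim M_i$ and $N\cong\varprojlim N_j$ respectively.

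Starting from presentations $M\cong\varprojlim_{i\in\bbN} M_i$ and $N\cong\varprojlim_{i\in\bbN} N_i$ with a common index set, I would define
\begin{equation*}
M_i' := \im\bigl(M\xrightarrow{(\phi_i,\, p_i\circ f)} M_i\oplus N_i\bigr).
\end{equation*}
Viewed as the quotient of $M$ by the closed submodule $\ker(\phi_i)\cap\ker(f_i)$, each $M_i'$ is pro-coadmissible, by Corollary \ref{closed} read through the antiequivalence of Corollary \ref{procoad}. Since $M_i'$ embeds into the coadmissible module $M_i\oplus N_i$, Corollary \ref{coadinj} upgrades this to coadmissibility and ensures the embedding is strict; in particular, the subspace and quotient topologies on $M_i'$ agree. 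The induced transition maps $M_{i+1}'\to M_i'$ are surjective, as $(\phi_i(m), f_i(m))$ lifts to $(\phi_{i+1}(m), f_{i+1}(m))$.

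The key step is then to verify that the natural morphism $\alpha : M \to \varprojlim_i M_i'$, $m\mapsto (\phi_i(m), f_i(m))_i$, is an isomorphism. Injectivity follows from $M\cong\varprojlim M_i$ by inspecting the first coordinate. For surjectivity, given a compatible family $((x_i, y_i))_i$, pick for each $i$ a preimage $m_i\in M$ with $\phi_i(m_i)=x_i$ and $f_i(m_i)=y_i$; compatibility of the $x_i$ forces $\phi_j(m_i)=x_j$ for all $j\le i$, so $(m_i)$ converges in $M$ to the unique $m$ with $\phi_j(m)=x_j$ for every $j$. Continuity of $f$ combined with compatibility of the $y_i$ then yields $f_j(m)=y_j$ for all $j$, so $\alpha(m)=((x_i,y_i))$. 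Being a continuous bijection between Fréchet spaces, $\alpha$ is an isomorphism by the Open Mapping Theorem.

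Once $\alpha$ is identified, $f$ is realized as the inverse limit of the second-projection morphisms $M_i'\to N_i$, $(x,y)\mapsto y$, which is exactly the assertion of the proposition. The most delicate point in the argument is the topological comparison needed to view $M_i'$ as a coadmissible module, which is resolved by the strictness clause of Corollary \ref{coadinj}; the rest is careful bookkeeping with compatible families.
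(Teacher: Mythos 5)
Your construction is exactly the paper's: your $M_i'=\im(M\to M_i\oplus N_i)$ is the module $\widetilde M_i=M/(T_i(M)\cap S_i)$ used there, where $T_i(M)=\ker\phi_i$ and $S_i=\ker f_i$. The proof is correct and follows essentially the same route, but two verification steps are done differently, and in one case more cleanly than in the paper. For the coadmissibility of $M_i'$ the paper uses the strictly exact sequence $0\to S_i/(T_i(M)\cap S_i)\to\widetilde M_i\to M/S_i\to 0$ together with Corollaries \ref{Dnflat} and \ref{coadcheck}, whereas you simply observe that $M_i'$ is a pro-coadmissible module admitting a bounded injection into the coadmissible module $M_i\oplus N_i$ and apply Corollary \ref{coadinj} once; this is a genuine simplification and also delivers the strictness of the embedding in the same stroke. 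For the identification $M\cong\varprojlim M_i'$ the paper exhibits $M$ as a dense direct summand (via the surjections $\widetilde M_i\to M_i$ giving a retraction), while you verify bijectivity of $\alpha$ by hand and invoke the Open Mapping Theorem; both are valid, and the paper's retraction argument is perhaps the slicker of the two. In short, same decomposition, a cleaner coadmissibility check on your side, a cleaner limit identification on the paper's side.
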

	
	\begin{proof}
		Let $M\cong \varprojlim M_i$, $N\cong \varprojlim N_i$. Let $T_i(M)$ denote the kernel of the projection $M\to M_i$. Note that $T_i(M)$ is a closed submodule of $M$ and hence pro-coadmissible.

		Let $g_i: M\to N_i$ denote the morphism obtained by composing $f$ with the natural projection $N\to N_i$, and let $S_i=\mathrm{ker}g_i$. Once again, this is a closed submodule of $M$ and hence pro-coadmissible. 
		
		In particular, $T_i(M)\cap S_i$ is a closed submodule of $M$ and we can consider the pro-coadmissible module $\widetilde{M}_i=M/(T_i(M)\cap S_i)$.

		We now claim the following:		
		\begin{enumerate}
			\item[(i)] Each $\widetilde{M}_i$ is coadmissible.
			
			\item[(ii)] The natural morphisms $M\to \widetilde{M}_i$ induce an isomorphism 
			$M\cong \varprojlim \widetilde{M}_i$.
			
			\item[(iii)] $f$ is obtained as the inverse limit of the morphisms $\widetilde{M}_i\to N_i$.
		\end{enumerate}
		
		For (i), we consider the strictly exact sequence
		\begin{equation*}
			0\to \frac{S_i}{T_i(M)\cap S_i}\to \widetilde{M}_i\to M/S_i\to 0
		\end{equation*}
		of pro-coadmissible $D(C)$-modules. Since $\frac{S_i}{T_i(M)\cap S_i}$ injects into $M_i\cong M/T_i(M)$, it is coadmissible by Corollary \ref{coadinj}. Likewise, $M/S_i$ injects into $N_i$ and is therefore coadmissible.
		
		It follows that $\widetilde{M}_i$ is also coadmissible: Corollary \ref{Dnflat} yields strictly exact sequences
		\begin{equation*}
			0\to D_n(C)\h{\otimes}\frac{S_i}{T_i(M)\cap S_i}\to D_n(C)\h{\otimes} \widetilde{M}_i\to D_n(C)\h{\otimes} M/S\to 0,
		\end{equation*}
		where $D_n(C)\h{\otimes} \frac{S_i}{T_i(M)\cap S_i}$ and $D_n(C)\h{\otimes} M/S$ are finitely generated $D_n(C)$-modules with their canonical Banach structure. Hence, the same is true of the middle term $D_n(C)\h{\otimes} \widetilde{M}_i$, so $\widetilde{M}_i$ is coadmissible by Corollary \ref{coadcheck}.
		
		For (ii), note that the natural surjections $\widetilde{M}_i=M/(T_i(M)\cap S_i)\to M_i=M/T_i(M)$ produce a section to the natural morphism $M\to \varprojlim \widetilde{M}_i$. Thus $M$ is a direct summand of $\varprojlim \widetilde{M}_i$. But since $M$ surjects onto each $\widetilde{M}_i$, it is also a dense subspace, and hence $M\cong \varprojlim \widetilde{M}_i$.
		
		For (iii), it is clear from the definition that $g_i$ factors through $\widetilde{M}_i$, i.e. $g_i$ induces a morphism $f_i: \widetilde{M}_i\to N_i$, and $f=\varprojlim f_i$ by construction.
	\end{proof}
	
	\begin{prop}
		Any morphism of pro-coadmissible $D(C)$-modules is strict.
	\end{prop}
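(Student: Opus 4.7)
The plan is to reduce strictness to closedness of the image, then establish closedness by a Mittag-Leffler argument applied to a levelwise presentation of $f$. Since pro-coadmissible modules are nuclear Fr\'echet spaces, a continuous $D(C)$-linear map $f : M \to N$ is strict as soon as $f(M)$ is closed in $N$: the coimage $M/\ker f$ is then Fr\'echet (quotient of the nuclear Fr\'echet space $M$ by the closed submodule $\ker f$), and the induced continuous bijection $M/\ker f \to f(M)$ between Fr\'echet spaces is a topological isomorphism by the Open Mapping Theorem \cite[Corollary 8.7]{S1}. Hence the entire problem reduces to proving closedness of $f(M)$.

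To this end, I would apply Proposition \ref{levelwise} to produce presentations $M \cong \varprojlim M_i$ and $N \cong \varprojlim N_i$ with surjective transition maps and coadmissible factors, such that $f = \varprojlim f_i$ for morphisms $f_i : M_i \to N_i$. By the corollary immediately preceding Proposition \ref{levelwise}, each $f_i$ is strict, so $I_i := f_i(M_i) \subseteq N_i$ is coadmissible, hence closed in $N_i$, and the induced transitions $I_{i+1} \to I_i$ are surjective (lift any preimage of $y \in I_i$ along the surjection $M_{i+1} \to M_i$ and apply $f_{i+1}$). Consequently $\varprojlim I_i$ is a closed $D(C)$-submodule of $N$, and the inclusion $f(M) \subseteq \varprojlim I_i$ is obvious from functoriality.

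The heart of the proof is the reverse inclusion $\varprojlim I_i \subseteq f(M)$. Applying $\varprojlim$ to the strict short exact sequences $0 \to \ker f_i \to M_i \to I_i \to 0$ of coadmissible (nuclear Fr\'echet) modules yields, via the derived-limit long exact sequence, that $M \twoheadrightarrow \varprojlim I_i$ is surjective provided $\varprojlim^1 \ker f_i = 0$. This is the main technical obstacle and is not immediate, since surjectivity of transitions in $(M_i)$ and $(I_i)$ does not automatically propagate to the kernels. I would overcome this by refining the presentation using exactly the quotient construction from the proof of Proposition \ref{levelwise}: replace $\ker f_i$ by the intersection of the images of $\ker f_j \to \ker f_i$ for $j \geq i$, which is a closed (hence coadmissible) $D(C)$-submodule with surjective transitions by construction, and readjust the $M_i$ accordingly so as to preserve the pro-coadmissible presentations of $M$ and $N$ and the compatibility with $f$. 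With $\varprojlim^1 \ker f_i = 0$ in hand, one obtains $f(M) = \varprojlim I_i$, closed in $N$, and strictness of $f$ follows from the first paragraph.
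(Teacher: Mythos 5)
Your reduction of strictness to closedness of $f(M)$ is correct, and your identification $\overline{f(M)}=\varprojlim I_i$ (with $I_i=f_i(M_i)$ coadmissible, closed, and with surjective transitions) is also sound. The genuine gap is exactly where you flag it: you assert $\varprojlim^1\ker f_i=0$ after ``refining the presentation by replacing $\ker f_i$ by the intersection of the images of $\ker f_j\to\ker f_i$,'' but this does not establish the Mittag--Leffler condition. Setting $K_i:=\bigcap_{j\geq i}\mathrm{im}(\ker f_j\to\ker f_i)$ (a closed, hence coadmissible, submodule of $\ker f_i$), there is no reason the transitions $K_{i+1}\to K_i$ are surjective, or even have dense image: given $x\in K_i$ one gets, for each $j\geq i+1$, some $z_j\in\ker f_j$ mapping to $x$, but the images of the $z_j$ in $\ker f_{i+1}$ need not admit any common refinement landing in $K_{i+1}$. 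The usual stable-image trick requires the decreasing chains $\mathrm{im}(\ker f_j\to\ker f_i)$ to stabilize, and coadmissible modules do not satisfy the descending chain condition (e.g.\ $x^jD(\bbZ_p)\subset D(\bbZ_p)$). The further claim that one can ``readjust the $M_i$ accordingly'' while preserving both presentations and the compatibility with $f$ is not substantiated and is not a routine adjustment.

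The paper takes a route that sidesteps $\varprojlim^1$ entirely. It first reduces to $f$ injective with dense image by replacing $M$ by $\coim f$ and $N$ by $\im f$ (the closure), so there is no kernel system to control; one then only needs to show $f$ is an isomorphism. Then, instead of tracking kernels at the Fr\'echet level, it builds a new presentation $\widetilde M_i=M/(S_i\cap T_i(M))$ of $M$ adapted to the \emph{Banach} quotients $N_{i,i}=D_i(C)\h\otimes_{D(C)}N$ of $N$, taking $S_i=\ker(M\to N_{i,i})$. The decisive point is that $D_i(C)\h\otimes\widetilde M_i\to N_{i,i}$ is an injective map with dense image between finitely generated modules over the Noetherian Banach algebra $D_i(C)$, hence an isomorphism, and taking $\varprojlim_i$ gives $f$ an isomorphism. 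In short, the Noetherianity is exploited at the Banach level $D_i(C)$ where it is available, rather than at the Fr\'echet level where the Mittag--Leffler stabilization you would need is false. If you want to keep your outline, you would need to import this Banach-level argument (or some equivalent finiteness input) to justify the $\varprojlim^1$ vanishing; as written, that step is unproved and, I believe, not provable by the stable-image manoeuvre alone.
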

	\begin{proof}
		Let $f: M\to N$ be a morphism of pro-coadmissible $D(C)$-modules. Replacing $M$ by $\mathrm{Coim}f$ and $N$ by $\mathrm{Im}f$, we can assume that $f$ is injective with dense image. We wish to show that in this case, $f$ is an isomorphism.
		
		We write $M\cong \varprojlim M_i$, $N\cong \varprojlim N_i$. By Proposition \ref{levelwise}, we can assume that $f$ is induced by morphisms $f_i: M_i\to N_i$. Note that, since morphisms of coadmissible modules are strict, each $f_i$ is surjective (but not necessarily injective).
		
		In particular, there is a natural isomorphism $M/(M\cap T_i(N))\cong N_i$. 
		
		Let $S_i$ denote the kernel of the natural map $M\to N\to N_{i, i}$, and set $\widetilde{M}_i=M/(S_i\cap T_i(M))$. Since $S_i\cap T_i(M)$ is closed, this is a pro-coadmissible module.
		
		We now claim the following, similar to Proposition \ref{levelwise}:
		\begin{enumerate}
			\item[(i)] Each $\widetilde{M}_i$ is coadmissible. In particular, $\varprojlim \widetilde{M}_i$ is pro-coadmissible and
			\begin{equation*}
				\varprojlim \widetilde{M}_i\cong \varprojlim D_i(C)\h{\otimes}_{D(C)} \widetilde{M}_i
			\end{equation*}
			by Lemma \ref{limiso}.
			\item[(ii)] The natural morphisms $M\to \widetilde{M}_i$ induce an isomorphism $M\cong \varprojlim \widetilde{M}_i$.
			\item[(iii)] For each $i$, $f$ induces an isomorphism 
			\begin{equation*}
				D_i(C)\h{\otimes}_{D(C)} \widetilde{M}_i\to N_{i, i}.
			\end{equation*}
			Taking the limit, it follows that $f$ is an isomorphism.
		\end{enumerate}
		
		For (i), we have a strictly exact sequence
		\begin{equation*}
			0\to \frac{T_i(M)}{T_i(M)\cap S_i}\to \widetilde{M}_i\to M_i\to 0.
		\end{equation*}
		Since $S_i$ contains $T_i(N)\cap M$, $M/S_i$ is a quotient of $N_i$ by a closed submodule and thus coadmissible by \cite[Lemma 3.6]{ST2}. Hence $\frac{T_i(M)}{T_i(M)\cap S_i}\subseteq M/S_i$ is also coadmissible by Corollary \ref{coadinj}. Since $M_i$ is also coadmissible, it follows as before that $\widetilde{M}_i$ is coadmissible.
		
		The proof for (ii) is the same as in Proposition \ref{levelwise}.
		
		For (iii), $f$ induces by construction a morphism
		\begin{equation*}
			D_i\h{\otimes}_{D(C)} \widetilde{M}_i\to N_{i, i}.
		\end{equation*}
		Note that applying Corollary \ref{Dnflat} to the strictly exact sequences
		\begin{equation*}
			0\to M\cap T_i(N)\to M\to N_i\to 0
		\end{equation*}
		and
		\begin{equation*}
			0\to S_i\to M\to \widetilde{M}_i\to 0
		\end{equation*}
		shows that $D_i(C)\h{\otimes}\widetilde{M}_i$ is a quotient of $D_i(C)\h{\otimes} M$, but the kernel of
		\begin{equation*}
			D_i(C)\h{\otimes} M\to D_i(C)\h{\otimes} N_i=N_{i, i}
		\end{equation*}
		consists of $D_i(C)\h{\otimes} (M\cap T_i(N))$, which maps to zero in $D_i(C)\h{\otimes} \widetilde{M}_i$, since $S_i$ contains $M\cap T_i(N)$. Hence the morphism
		\begin{equation*}
			D_i(C)\h{\otimes}\widetilde{M}_i\to N_{i, i}
		\end{equation*}
		is injective.
		
		But since $M\to N_{i, i}$ has dense image and morphisms between finitely generated $D_i(C)$-modules are strict, this yields the desired isomorphism. 
	\end{proof}

	\begin{cor}
		\label{iastrict}
		Any morphism of ind-admissible $H$-representations is strict.
	\end{cor}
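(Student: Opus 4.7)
The plan is to deduce this corollary from the preceding Proposition via the anti-equivalence between ind-admissible $H$-representations and pro-coadmissible $D(H)$-modules established in Corollary \ref{procoad}. I would proceed by dualizing the given morphism and transferring strictness across the equivalence.

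Given a continuous $H$-equivariant map $f\colon V \to W$ of ind-admissible representations, one first notes that the transpose $f'\colon W'_b \to V'_b$ is a $D(H)$-linear morphism of pro-coadmissible $D(H)$-modules; via Lemma \ref{bembedding}, it is also a morphism of pro-coadmissible $D(C)$-modules. Applying the preceding Proposition, $f'$ is strict, so both $\mathrm{ker}(f')$ and $\mathrm{im}(f')$ are closed in $W'_b$ and $V'_b$ respectively, and the induced bijection $W'_b/\mathrm{ker}(f')\to\mathrm{im}(f')$ is a topological isomorphism.

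The remaining task is to translate this back to $f$. Since $V$ and $W$ are of compact type (as observed after Definition \ref{dfn_qa}), the duality $U\mapsto U'_b$ is an anti-equivalence between such representations and separately continuous $D(H)$-modules on nuclear Fr\'echet spaces (\cite[Corollary 3.3]{ST1}), and it sends short strictly exact sequences to short strictly exact sequences by \cite[Proposition 1.2]{ST1}. Consequently $\mathrm{ker}(f')\subseteq W'_b$ is precisely $(W/\overline{\mathrm{im}(f)})'_b$ while $\mathrm{im}(f')\subseteq V'_b$ is precisely $(V/\mathrm{ker}(f))'_b$, and strictness of $f'$ then forces $\mathrm{im}(f)$ to be closed in $W$ and $V/\mathrm{ker}(f)\to\mathrm{im}(f)$ to be a topological isomorphism, i.e.\ $f$ is strict.

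I expect the main technical subtlety to lie in the bookkeeping between the bornological notion of strictness used in the preceding Proposition and the topological notion needed here; this should be handled cleanly by Lemma \ref{bembedding}, which ensures that the bornologification functor on nuclear Fr\'echet spaces is fully faithful and exact, so that strictness in $\h{\B}c_K$ agrees with strictness in the category of locally convex topological $K$-vector spaces in our setting. Beyond that accounting, the argument is entirely formal.
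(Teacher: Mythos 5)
Your proposal follows the same route as the paper: dualize $f$, observe that $f'$ is a morphism of pro-coadmissible $D(C)$-modules and hence strict by the preceding Proposition, and then transfer strictness back to $f$ across the anti-equivalence of \cite[Corollary 3.3]{ST1} using reflexivity and duality of strictly exact sequences.

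One step deserves tightening. You assert ``consequently \ldots $\mathrm{im}(f')\subseteq V'_b$ is precisely $(V/\mathrm{ker}(f))'_b$,'' and only afterwards invoke ``strictness of $f'$ then forces \ldots''. But that identification does not follow merely from dualizing the short exact sequence $0\to\ker(f)\to V\to V/\ker(f)\to 0$: dualizing only gives the inclusion $\mathrm{im}(f')\subseteq\ker(f)^{\perp}=(V/\ker(f))'_b$. To obtain equality you need that $\mathrm{im}(f')$ is closed in $V'_b$ (which comes from strictness of $f'$) together with a bipolar argument: $\mathrm{im}(f')=\mathrm{im}(f')^{\perp\perp}$ in the reflexive pair $(V,V')$, and $\mathrm{im}(f')^{\perp}=\ker(f)$. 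So the strictness of $f'$ must be used \emph{before}, not after, the second identification. The paper sidesteps this by dualizing the two strictly exact sequences attached to $f'$ (rather than those attached to $f$): reflexivity then exhibits $(\mathrm{im}(f'))'$ as the coimage of $f$ and $(\mathrm{coim}(f'))'$ as the image of $f$ directly, and the isomorphism $\mathrm{coim}(f')\cong\mathrm{im}(f')$ dualizes to the required isomorphism $\mathrm{coim}(f)\cong\mathrm{im}(f)$. With that reordering (or an explicit bipolar argument), your proof is correct and essentially the same as the paper's.
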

	\begin{proof}
		Compare \cite[Proposition 6.4]{ST2}.
		
		If $f: V\to W$ is a morphism of ind-admissible $H$-representations, then the dual map $f': W'\to V'$ is a morphism of pro-coadmissible $D(C)$-modules and hence strict, i.e. we have strictly exact sequences
		\begin{equation*}
			0\to \mathrm{ker}(f')\to W'\to \mathrm{coim}(f')\to 0
		\end{equation*}
		and
		\begin{equation*}
			0\to \mathrm{im}(f')\to V'\to \mathrm{coker}(f')\to 0
		\end{equation*}
		of pro-coadmissible modules, with $f'$ inducing an isomorphism $\mathrm{coim}(f')\cong \mathrm{im}(f')$. 
		
		But then reflexivity implies that $V\to (\mathrm{im}(f'))'$ is a strict epimorphism such that $f$ factors through a monomorphism $(\mathrm{im}(f'))'\to W$. Hence $(\mathrm{im}(f'))'$ is the coimage of $f$ (as the universal morphism $\mathrm{coim}f\to (\mathrm{im}(f'))'$ is then both a strict epimorphism and a monomorphism). By an analogous argument, $(\mathrm{coim}(f'))'$ is the image of $f$, and the isomorphism between the two shows that $f$ is strict.
	\end{proof}

	\begin{cor}
		\label{indep}
		Let $C, C'\leq H$ denote two compact open subgroups of $H$. Then a locally analytic representation $V$ is ind-admissible relative to $C$ if and only if it is ind-admissible relative to $C'$.
	\end{cor}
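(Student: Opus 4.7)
The plan is to reduce, by taking intersections, to the case of an inclusion $C_0 \leq C$ of compact open subgroups of $H$. Since $C \cap C'$ is a compact open subgroup contained in both $C$ and $C'$, it suffices to show that for any such inclusion, ind-admissibility relative to $C$ is equivalent to ind-admissibility relative to $C_0$.

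The implication from $C$ to $C_0$ is the easy direction: if $V|_C = \varinjlim V_i$ with each $V_i$ admissible over $C$, then $V|_{C_0} = \varinjlim V_i|_{C_0}$, and the standard independence of admissibility (using that $D(C)$ is a finite free $D(C_0)$-module) ensures each $V_i|_{C_0}$ is admissible over $C_0$, so the restriction is a strict inductive limit of admissible $C_0$-representations.

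For the reverse implication, I first replace $C_0$ by its normal core $\bigcap_{g \in C/C_0} g C_0 g^{-1}$; this is still of finite index in $C$, and ind-admissibility over $C_0$ descends to it by the easy direction applied inside $C_0$. So I may assume $C_0 \triangleleft C$. Writing $V|_{C_0} = \varinjlim V_i$ with $V_i$ admissible over $C_0$ and choosing coset representatives $g_1, \dots, g_n$ for $C/C_0$, I set $W_i := \sum_j g_j V_i$. Normality of $C_0$ guarantees that each $g_j V_i$ is a $C_0$-stable closed subspace of $V$ (the $C_0$-action is transported through the conjugation automorphism $c \mapsto g_j^{-1} c g_j$, which preserves admissibility), and a direct coset computation shows $W_i$ is $C$-stable.

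The key observation is that because $V$ is of compact type as an ind-admissible representation and each $g_j V_i$ is bounded in $V$, the argument used in the proof of Proposition \ref{proj_lim_Frechet} produces an index $k(i)$ with $g_j V_i \subseteq V_{k(i)}$ for all $j$, so that $W_i \subseteq V_{k(i)}$. I then identify $W_i$ with the image of the continuous $C_0$-equivariant map $\bigoplus_j g_j V_i \to V_{k(i)}$ of admissible $C_0$-representations; by \cite[Proposition 6.4]{ST2} this image is closed and admissible as a $C_0$-representation, and since $[C:C_0] < \infty$, coadmissibility of its dual over $D(C_0)$ coincides with coadmissibility over $D(C)$, so $W_i$ is admissible as a $C$-representation. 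The chain $V_i \subseteq W_i \subseteq V_{k(i)}$ exhibits cofinality of the systems $(V_i)$ and $(W_i)$ in $V$, hence $V|_C = \varinjlim W_i$ as locally convex spaces, giving the desired ind-admissible presentation. The main technical obstacle is the last step, namely the passage from $C_0$-admissibility of $W_i$ to $C$-admissibility, which requires a careful invocation of the independence of coadmissibility for $D(C_0)$ versus $D(C)$ on a module carrying a compatible $D(C)$-action.
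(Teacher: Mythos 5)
There is a genuine gap at your ``key observation.'' You assert that each $g_j V_i$ is a bounded subset of $V$; this is false, since $g_jV_i$ is a (generically infinite-dimensional) linear subspace, and a linear subspace of a Hausdorff locally convex space is bounded only if it is zero. Consequently the appeal to the bounded-set argument in Proposition \ref{proj_lim_Frechet} (which rests on the fact that a \emph{bounded} subset of a strict inductive limit is absorbed into one of the terms, cf.\ [S1, Prop.~5.6]) does not apply, and the containment $W_i\subseteq V_{k(i)}$ is not established. This containment is not a harmless technicality: your later identification of $W_i$ as the closed image of $\bigoplus_j g_jV_i\to V_{k(i)}$, and the cofinality argument $V_i\subseteq W_i\subseteq V_{k(i)}$ which is meant to show $\varinjlim W_i = V|_C$ topologically, both depend on it. In general a closed admissible $C_0$-subrepresentation of a strict inductive limit of admissible $C_0$-representations need not lie inside a single term of the system, so there is no obvious repair of this step as stated.

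The paper's proof avoids this issue altogether. Rather than trying to locate $W_i$ inside a single $V_{k(i)}$, it realizes $W_i$ as the image of the natural morphism $\alpha\colon \Ind_{C'}^{C}V_i\to V$, which is a morphism between ind-admissible $C'$-representations. It then invokes Corollary \ref{iastrict} (strictness of morphisms of ind-admissible representations) to conclude $W_i$ carries the quotient topology from the admissible $C$-representation $\Ind_{C'}^C V_i$ and is hence admissible over $C$. To compare topologies on $\bigcup_i W_i$ it again uses strictness: the continuous bijection $\varinjlim V_i\to\varinjlim W_i$ is a morphism of ind-admissible $C'$-representations, hence strict, hence an isomorphism. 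If you replace your boundedness step by this strictness argument (mapping into $V$ directly instead of into some $V_{k(i)}$), your proof converges to the paper's; in particular the preliminary passage to the normal core of $C_0$ in $C$, while harmless, becomes unnecessary.
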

	\begin{proof}
		By considering $C\cap C'$, it suffices to treat the case where $C'\leq C$.
		
		Let $V$ be an ind-admissible $H$-representation relative to $C$. Then $V|_C\cong \varinjlim V_i$, where each $V_i$ is an admissible $C$-representation. But then $V_i$ is also an admissible $C'$-representation so $V$ is also ind-admissible relative to $C'$ by \cite[Lemma 3.8]{ST2}.
		
		Conversely, if $V$ is an admissible $H$-representation relative to $C'$, write $V|_{C'}\cong \varinjlim V_i$, where each $V_i$ is an admissible $C'$-representation. For each $i$, let $W_i$ denote the $C$-subrepresentation of $V$ generated by $V_i$. 
		
		Then $W_i$ is the image of the natural morphism $\alpha: \mathrm{Ind}_{C'}^CV_i\to V$ induced by the inclusion $V_i\to V$. Since $C'$ is of finite index in $C$, $\mathrm{Ind}_{C'}^CV_i$ is an admissible $C$-representation, and hence also an admissible $C'$-representation, again by \cite[Lemma 3.8]{ST2}. 
		
		In particular, $\alpha$ is a morphism between ind-admissible $C'$-representations, so is strict by the above. Hence $W_i\subseteq V$ is endowed with the quotient topology from $\mathrm{Ind}V_i$ and is thus an admissible $C$-representation by Corollary \ref{closed}. In particular, $\varinjlim W_i$ is an ind-admissible $C$-representation.
		
		As a vector space, $\varinjlim W_i=\cup W_i=V$, so it now remains to show the topologies agree, i.e. that the natural continuous bijection $\varinjlim V_i\to \varinjlim W_i$ is an isomorphism.
		
		But as this is in particular a continuous map of ind-admissible $H$-representations relative to $C'$ (by the easy direction above), it is strict by Corollary \ref{iastrict}, and thus an isomorphism, as required. 
	\end{proof}
	
	The strictness result Corollary \ref{iastrict} also yields an alternative, module-theoretic proof of Proposition \ref{abeliancat}: Writing $\mathrm{Rep}^{\mathrm{ind-adm}}(H)$ to denote the full subcategory of ind-admissible $H$-representations, we obtain the following.
	
	\begin{prop}
		\label{iaabelian}
		\leavevmode
		\begin{enumerate}
			\item[(i)] $\mathrm{Rep}^{\mathrm{ind-adm}}(H)$ is an abelian category.
			\item[(ii)] $\mathrm{Rep}^{\mathrm{ind-adm}}(H)$ is closed under the passage to closed subrepresentations and under countable direct sums (and hence under countable inductive limits by (i)).
		\end{enumerate}
	\end{prop}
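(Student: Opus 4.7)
The plan is to exploit the anti-equivalence between ind-admissible $H$-representations and pro-coadmissible $D(C)$-modules given by Corollary \ref{procoad}, together with the strictness results established just above. Once we know that every morphism of ind-admissible representations is strict (Corollary \ref{iastrict}) and that closed subrepresentations and their Hausdorff quotients are again ind-admissible (Corollary \ref{closed}), both assertions of the proposition reduce to routine verifications.

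For part (i), I would argue as follows. Given a morphism $f \colon V \to W$ in $\mathrm{Rep}^{\mathrm{ind-adm}}(H)$, the kernel $\ker(f)$ is closed in $V$ because $W$ is Hausdorff, so Corollary \ref{closed} shows that $\ker(f)$ is ind-admissible, and it is clearly the categorical kernel. By Corollary \ref{iastrict}, $f$ is strict, so $\im(f)$ is closed in $W$; hence $\coker(f) = W/\im(f)$ is ind-admissible by Corollary \ref{closed} and serves as the categorical cokernel. Strictness of $f$ further forces the canonical comparison map $\coim(f) \to \im(f)$ to be a topological isomorphism, and this isomorphism is automatically $H$-equivariant. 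The remaining additive structure (zero object, biproducts, additivity of morphism sets) is inherited from $\mathrm{Rep}_K^{la}(H)$ without new input, so the category is abelian.

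For part (ii), closure under closed subrepresentations is already contained in Corollary \ref{closed}. For countable direct sums, let $\{V_n\}_{n \geq 1}$ be ind-admissible with $V_n|_C \cong \varinjlim_i V_{n,i}$, each $V_{n,i}$ admissible. As $C$-representations one has
\begin{equation*}
\bigoplus_{n \geq 1} V_n \;\cong\; \varinjlim_k \Bigl( \bigoplus_{n \leq k} V_{n,k} \Bigr),
\end{equation*}
and each finite direct sum $\bigoplus_{n \leq k} V_{n,k}$ is admissible because its strong dual is the finite direct sum of the corresponding finitely generated coadmissible $D(C)$-modules, which is again finitely generated coadmissible. Hence the countable direct sum is ind-admissible relative to $C$ (and by Corollary \ref{indep}, relative to any compact open subgroup). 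The existence of countable inductive limits then follows formally from countable coproducts plus cokernels via the usual presentation of $\varinjlim V_n$ as a cokernel of a shift map between coproducts.

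I do not expect any genuine obstacle, since the entire content of the proposition has effectively been packaged into Corollaries \ref{closed} and \ref{iastrict}, which themselves rest on the careful pro-coadmissible module theory developed above. The only mild care to exercise is that topologies match after each construction: that kernels and cokernels carry the correct subspace and quotient topologies is exactly the strictness statement, and that the inductive-limit topology on $\bigoplus V_n$ agrees with the presentation above is clear from the definition of the locally convex coproduct.
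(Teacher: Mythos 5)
Your proof is correct and follows essentially the same route as the paper: both rest entirely on Corollary \ref{closed} (closed subrepresentations and quotients are ind-admissible) and Corollary \ref{iastrict} (strictness of morphisms), with the paper phrasing the cokernel dually as $(\ker f')'$ where you write $W/\operatorname{im}(f)$ after observing that strictness forces $\operatorname{im}(f)$ to be closed --- these coincide by reflexivity. For part (ii) the paper simply calls the closure under countable direct sums ``immediate from the definition,'' whereas you spell out the diagonal presentation $\bigoplus_n V_n \cong \varinjlim_k \bigl(\bigoplus_{n\le k} V_{n,k}\bigr)$ explicitly; this is a harmless elaboration of the same idea.
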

	\begin{proof}
		\begin{enumerate}
			\item[(i)] Clearly, $\mathrm{Rep}^{\mathrm{ind-adm}}(H)$ is an additive category. If $f: V\to W$ is a morphism in $\mathrm{Rep}(H)$, then the kernel of $f$ is a closed subspace of $V$ and hence also an object in $\mathrm{Rep}^{\mathrm{ind-adm}}(H)$ by Corollary \ref{closed}. Likewise, we have already seen in Corollary \ref{iastrict} $\mathrm{coker}f=(\mathrm{ker}f')'$, and thus $\mathrm{coker}f$ is also ind-admissible. 
			
			Lastly, we have already seen in Corollary \ref{iastrict} that $f$ is strict, so that $\mathrm{coim}f\to \mathrm{im}f$ is an isomorphism. Thus $\mathrm{Rep}^{\mathrm{ind-adm}}(H)$ is an abelian category.
			\item[(ii)] The statement on closed subrepresentations is proved in Corollary \ref{closed}, the statement on countable direct sums is immediate from the definition.
		\end{enumerate}
	\end{proof}
	
	Of course, we have dually that the category of pro-coadmissible $D(H)$-modules is abelian, closed under the passage to closed submodules and under countable products.
	
	We also remark that all results for pro-coadmissible $D(C)$-modules hold in the same way when working over an arbitrary Fr\'echet--Stein algebra $U$, provided that coadmissible $U$-modules are nuclear Fr\'echet spaces (e.g. $U=\wideparen{U(\mathfrak{g})}$ for some finite-dimensional $K$-Lie algebra $\mathfrak{g})$, or at least nuclear relative to some Noetherian Banach $K$-algebra $A$ with (almost) Noetherian unit ball, in the sense of \cite[Definition 5.3]{SixOp} (e.g. $U=\wideparen{U_A(L)}$ for an affinoid $K$-algebra $A$ and a smooth $(K, A)$-Lie--Rinehart algebra $L$).
	
	We conclude this section by highlighting one additional benefit of a complete bornological approach in the study the distribution algebra.
	
	Recall from \cite{ST1} that if $H$ is non-compact, then the distribution algebra $D(H)$ is slightly awkward to work with: the multiplication is only separately continuous, and accordingly, we usually consider nuclear Fr\'echet $D(H)$-modules with a separately continuous action. 
	
	We point out that this issue disappears on the bornological side: Let
	\begin{equation*}
		D_b(H)=\underset{h\in H/C}{\oplus}hD(C)\in \B c_K.
	\end{equation*} 
	Note $D_b(H)$ is a complete bornological vector space by \cite[Proposition 5.5]{ProsmansSchneiders}, and the multiplication on $D(C)$ together with multiplication on $H$ induces on $D_b(H)$ the structure of a complete bornological $K$-algebra, i.e. a monoid in $\h{\B}c_K$.
	
	Moreover, recall that $(-)^b: LCVS_K\to \B c_K$ admits a left adjoint $(-)^t: \B c_K\to LCVS_K$. Since $(-)^t$ commutes with colimits, it follows immediately that
	\begin{equation*}
		(D_b(H))^t\cong D(H).
	\end{equation*}
	
	We remark that while the underlying vector space is the same, it is not true that $D_b(H)=D(H)^b$ if $H$ is non-compact, as $(-)^b$ does not respect direct sums. Rather $D_b(H)=\mathrm{Cpt}(D(H))$ is the vector space $D(H)$ endowed with its compactoid bornology, compare \cite[Definition 1.15]{Meyer}, \cite[Definition 3.34]{Stein}. We also point out that much of the nice behaviour of nuclear Fr\'echet spaces comes from the fact that for $V$ nuclear Fr\'echet, $V^b\cong \mathrm{Cpt}(V)$ (compare \cite[Lemma 5.11]{SixOp}, \cite[Proposition 19.2]{S1}).
	
	\begin{lemma}\label{embeddingfornF}
		The functor $(-)^b$ induces a fully faithful functor
		\begin{equation*}
			(-)^b: \left\{\begin{matrix}
				\text{separately continuous $D(H)$-modules}\\
				\text{on nuclear Fr\'echet spaces}\\
				\text{with continuous $D(H)$-module morphisms}
			\end{matrix}\right\}\to \mathrm{Mod}_{\h{\B}c_K}(D_b(H))
		\end{equation*}
		whose essential image consists of all complete bornological $D_b(H)$-modules whose underlying vector spaces are nuclear Fr\'echet.
		
		For any $M$ in the left-hand side, the natural morphism $(M^b)^t\to M$ is an isomorphism.
		If $M$ is nuclear Fr\'echet $D(H)$-module and $N$ is a closed submodule, then
		\begin{equation*}
			0\to N^b\to M^b\to (M/N)^b\to 0
		\end{equation*}
		is strictly exact.
	\end{lemma}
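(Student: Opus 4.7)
The plan is to treat this as a direct generalization of Lemma \ref{bembedding} from the compact subgroup $C$ to the (possibly non-compact) group $H$, exploiting the direct sum decomposition $D_b(H) = \oplus_{h \in H/C} hD(C)$. The conceptual point is that the awkwardness of separately-continuous (but not jointly continuous) multiplication on $D(H)$ simply disappears on the bornological side, because $D_b(H)$ is a genuine monoid in $\h{\B}c_K$ by construction. Thus my first step is to reduce module-theoretic statements about $D_b(H)$ to module-theoretic statements about $D(C)$ plus extra bookkeeping indexed by $H/C$.

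For the fully faithful claim and the essential image, I would argue as follows. A separately continuous $D(H)$-module structure on a nuclear Fréchet space $V$ is equivalent to the data of a continuous $D(C)$-action on $V$ together with a family of continuous operators $\rho(h) \colon V \to V$, one for each coset representative $h \in H/C$, intertwining the $D(C)$-action via conjugation and satisfying the group-law cocycle condition. Dually, since $\h{\otimes}$ commutes with coproducts in $\h{\B}c_K$, a complete bornological $D_b(H)$-module structure on a nuclear Fréchet $V^b$ decomposes into a bornological $D(C)$-action together with bounded operators $V^b \to V^b$ indexed by $H/C$, subject to exactly the same compatibilities. The translation between these two data sets is then handled componentwise: for the $D(C)$-part by Lemma \ref{bembedding}, and for the individual translation operators by the fact that on nuclear Fréchet spaces (which are metrizable), bounded linear maps are continuous and vice versa under the equivalence $(-)^b$. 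This simultaneously establishes fully faithfulness and identifies the essential image as claimed.

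The isomorphism $(M^b)^t \cong M$ is then immediate from \cite[Lemma 4.3]{SixOp} applied to the underlying nuclear Fréchet space of $M$; the module structure on both sides agrees because $(-)^t$ preserves the algebra structure, namely $(D_b(H))^t \cong D(H)$ as remarked just before the lemma. Finally, for the strict exactness of
\[
0 \to N^b \to M^b \to (M/N)^b \to 0,
\]
the underlying sequence of bornological vector spaces is strictly exact by the analogous strict exactness statement in Lemma \ref{bembedding} (which in turn uses the exactness of $(-)^b$ on nuclear Fréchet spaces from \cite[Proposition 5.12]{SixOp}), and all maps are $D(H)$-linear by hypothesis. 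The main obstacle I anticipate is the careful bookkeeping in the second step, where one must verify that the associativity constraint $D_b(H) \h{\otimes} D_b(H) \h{\otimes} V^b \to V^b$ on the bornological side, after unpacking via the direct sum decomposition and the compatibility of $\h{\otimes}$ with coproducts, is indeed equivalent to the separate continuity data of the topological $D(H)$-action; once this correspondence is set up cleanly the rest is essentially formal consequence of Lemma \ref{bembedding}.
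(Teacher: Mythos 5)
Your proposal is correct and follows essentially the same route as the paper: both reduce the statement to Lemma \ref{bembedding} by observing that a complete bornological $D_b(H)$-module is precisely a complete bornological $D(C)$-module together with an extended abstract $H$-action by bounded linear maps, and that on nuclear Fréchet spaces the bounded/continuous dictionary is an equivalence. The paper phrases the reduction more compactly (it states that the $C$-action extends to an abstract $H$-action by bounded maps, rather than decomposing explicitly over coset representatives and cocycle conditions), but the underlying observation and the deduction from Lemma \ref{bembedding} are identical.
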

	
	\begin{proof}
		The above description makes it clear that a complete bornological $D_b(H)$-module is the same as a complete bornological $D(C)$-module such that the $C$-action extends to an abstract $H$-action with each $h\in H$ acting via a bounded linear map.
		
		The result thus follows from Lemma \ref{bembedding}.
	\end{proof}
	
	Viewing complete bornological $D_b(H)$-modules as complete bornological $D(C)$-modules with an extended $H$-action can be useful to define $D_b(H)$-module structures.
	
	For instance, if $C$ is normal in $H$ and $M$ is a complete bornological $D_b(H)$-module, then
	\begin{equation*}
		D_n(C)\h{\otimes}_{D(C)}M
	\end{equation*} 
	is likewise a complete bornological $D_b(H)$-module, with the $H$-action obtained by
	\begin{equation*}
		h*(P\otimes m)=hPh^{-1}\otimes hm.
	\end{equation*}
	
	We say that a nuclear Fr\'echet $D_b(H)$-module is topologically simple if it has no non-trivial closed submodules. By the above, this is equivalent to requiring that it has no non-trivial closed $H$-invariant $D(C)$-submodules.
	
	In particular, a pro-coadmissible $D_b(H)$-module $M$ is topologically simple if and only if the ind-admissible $H$-representation $M'$ is topologically irreducible. 
	
	The next Corollary can then be used to determine the topological irreducibility of ind-admissible representations, in analogy to \cite[Lemma 3.9]{ST2}.
	
	\begin{cor}
		\label{eachnirred}
		Assume that $C$ is normal in $H$. Let $M$ be a pro-coadmissible $D_b(H)$-module such that $D_n(C)\h{\otimes}_{D(C)}M$ is a topologically simple $D_b(H)$-module for infinitely many $n$. Then $M$ is a topologically simple $D_b(H)$-module.
	\end{cor}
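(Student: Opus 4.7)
The plan is to argue by contrapositive: assume that $M$ admits a proper non-zero closed $D_b(H)$-submodule $N$, and derive that $D_n(C)\h{\otimes}_{D(C)}M$ fails to be topologically simple for every sufficiently large $n$. This would contradict the hypothesis of infinitely many simple levels and force $N=0$ or $N=M$.

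First I would observe that both $N$ and $M/N$ are pro-coadmissible $D_b(H)$-modules: Corollary \ref{closed}, dualized via the anti-equivalence of Corollary \ref{procoad}, handles pro-coadmissibility, and the normality of $C$ in $H$ guarantees that the $D_b(H)$-structure descends to the subobject and quotient. Applying the exact functor $D_n(C)\h{\otimes}_{D(C)}-$ (Corollary \ref{Dnflat}) to the strictly exact sequence $0\to N\to M\to M/N\to 0$ yields the short strictly exact sequence
$$0\to D_n(C)\h{\otimes}_{D(C)}N\to D_n(C)\h{\otimes}_{D(C)}M\to D_n(C)\h{\otimes}_{D(C)}(M/N)\to 0$$
of $D_b(H)$-modules, with the $H$-action on each tensor product given by $h\ast(P\otimes m)=hPh^{-1}\otimes hm$ as explained just before the corollary. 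In particular, $D_n(C)\h{\otimes}_{D(C)}N$ sits as a closed $D_b(H)$-submodule of $D_n(C)\h{\otimes}_{D(C)}M$.

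The heart of the argument is to show that both $D_n(C)\h{\otimes}_{D(C)}N$ and $D_n(C)\h{\otimes}_{D(C)}(M/N)$ are non-zero for all sufficiently large $n$. Since $N$ is pro-coadmissible, Lemma \ref{changeton} gives $N\cong\varprojlim_n D_n(C)\h{\otimes}_{D(C)}N$, so from $N\neq 0$ we obtain some index $n_0$ with $D_{n_0}(C)\h{\otimes}_{D(C)}N\neq 0$. The step I expect to need most care is verifying that the transition maps $D_{n+1}(C)\h{\otimes}_{D(C)}N\to D_n(C)\h{\otimes}_{D(C)}N$ are surjective: I would extract this by writing each level, via Lemma \ref{changeton}, as the inverse limit $\varprojlim_i N_{i,n}$ over a pro-coadmissible presentation $N\cong\varprojlim_i N_i$, where the transitions $N_{i,n+1}\to N_{i,n}$ are surjective by the Fr\'echet--Stein property, and then passing to the limit with the aid of Douady's vanishing lemma for inverse systems of Banach spaces with surjective transitions. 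Surjectivity then propagates non-vanishing upward, giving $D_n(C)\h{\otimes}_{D(C)}N\neq 0$ for all $n\geq n_0$, and the same argument, applied to the non-zero pro-coadmissible module $M/N$, yields an $n_1$ with $D_n(C)\h{\otimes}_{D(C)}(M/N)\neq 0$ for $n\geq n_1$.

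For any $n\geq\max(n_0,n_1)$ lying in the infinite set of indices at which $D_n(C)\h{\otimes}_{D(C)}M$ is topologically simple, the closed $D_b(H)$-submodule $D_n(C)\h{\otimes}_{D(C)}N$ would be both proper and non-zero, contradicting simplicity. The main obstacle is the surjectivity of the transition maps in the system $(D_n(C)\h{\otimes}_{D(C)}N)_n$; once this is secured, the remainder is a formal consequence of the exactness of $D_n(C)\h{\otimes}_{D(C)}-$ on pro-coadmissible modules and of Lemma \ref{changeton}.
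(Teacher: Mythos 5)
Your strategy is the same as the paper's: use Corollary \ref{closed} and Corollary \ref{Dnflat} to see that $D_n(C)\h{\otimes}_{D(C)}N$ is a closed $D_b(H)$-submodule of $D_n(C)\h{\otimes}_{D(C)}M$ for a closed submodule $N\subseteq M$, and then use the inverse limit description from Lemma \ref{changeton} together with the behaviour of the transition maps $D_{n+1}(C)\h{\otimes}_{D(C)}N\to D_n(C)\h{\otimes}_{D(C)}N$ to promote the levelwise alternative to a statement about $N$ itself. The contrapositive phrasing (showing $D_n\h{\otimes}N$ is proper and non-zero for all large $n$) versus the paper's direct argument (concluding $N=0$ or $N=M$ from the levelwise dichotomy along a cofinal set of $n$) is an inessential stylistic difference.

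There is, however, one genuine error: you claim that the transition maps $D_{n+1}(C)\h{\otimes}_{D(C)}N\to D_n(C)\h{\otimes}_{D(C)}N$ are \emph{surjective}, and then propose a Douady-type inverse-limit argument to establish this. That claim is false even for coadmissible modules: for a coadmissible $D(C)$-module, the maps $D_{n+1}(C)\otimes_{D(C)}M\to D_n(C)\otimes_{D(C)}M$ only have dense image (e.g.\ for $M=D(C)$ the map $D_{n+1}(C)\to D_n(C)$ is not surjective), which is precisely what the paper records by calling them epimorphisms. The good news is that dense image is all you actually need: if the image of $D_{n+1}(C)\h{\otimes}N$ is dense in the nonzero Hausdorff space $D_n(C)\h{\otimes}N$, then $D_{n+1}(C)\h{\otimes}N\neq 0$, which is exactly the upward propagation of non-vanishing you use. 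So you should replace the surjectivity claim (and the unnecessary Douady detour) by the observation that the transitions have dense image; then your argument closes, and matches the paper's in spirit.
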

	\begin{proof}
		Let $N$ be a closed $D_b(H)$-submodule of $M$. By Corollary \ref{closed}, it is pro-coadmissible, and Corollary \ref{Dnflat} implies that $D_n(C)\h{\otimes}_{D(C)} N$ is a closed $D_b(H)$-submodule of $D_n(C)\h{\otimes}_{D(C)} M$. 
		
		Thus by assumption, $D_n(C)\h{\otimes}_{D(C)} N$ is either zero or equal to the module $D_n(C)\h{\otimes}_{D(C)} M$ for infinitely many $n$. Since the maps
		\begin{equation*}
			D_{n+1}(C)\h{\otimes}_{D(C)}N\to D_n(C)\h{\otimes}_{D(C)} N
		\end{equation*}
		are epimorphisms (i.e. have dense image), it follows from Lemma \ref{changeton} that either $N=0$ (when $D_n(C)\h{\otimes}N=0$ for all $n$) or $N= M$. 
	\end{proof}
	
	A standard source of ind-admissible representation is compact induction. For this, let $G$ be a locally $L$-analytic group.

	\begin{dfn}
		\label{compactmodH}
		Let $H$ be a closed (not necessarily compact) subgroup of $G$, and let $V$ be a Hausdorff locally convex $K$-vector space. A locally analytic function $f: G\to V$ has \textbf{compact support modulo $H$} if there exists a compact subset $C\subseteq G$ such that $f(g)=0$ for all $g\notin CH$. 
	\end{dfn}
	
	Note that if $H$ itself is compact, having compact support modulo $H$ is equivalent to having compact support.
	
	For our purposes, the following situation will be more important: Suppose that $H_0$ is a compact open subgroup of $G$ and $H=H_0Z_G$, where $Z_G$ denotes the centre of $G$. In this case, having compact support modulo $H$ is equivalent to the more standard terminology of `compact support modulo centre', i.e. there exists a compact subset $C\subseteq G$ such that $f$ vanishes outside of $CZ_G$. By compactness of $H_0$, this is equivalent to the following: there exist finitely many elements $g_1, \hdots, g_m\in G$ such that $f(g)=0$ for all $g\notin \cup_{i=1}^m g_iH$.
	
	It is this viewpoint which will be most convenient for our constructions.
	
	Fix from now on an open compact subgroup $H_0$ and set $H=H_0Z_G$. We assume that the set $G/H$ of cosets is countable -- this is for example the case when $G$ is second countable.
	
	Let $\rho: H\to \mathrm{GL}(V)$ be a locally $L$-analytic $H$-representation over $K$. We consider the compact induction
	\begin{equation*}
		\mathrm{c-Ind}_H^GV=\left\{
		f: G\to V | \begin{matrix}\ f \text{ loc. an. with compact support modulo $H$},\\ \ f(gh)=\rho(h^{-1})(f(g)) \ \forall g\in G, \ h\in H
		\end{matrix}\right\}
	\end{equation*}
	with the usual action $(g*f)(g')=f(g^{-1}g')$. Note that $\mathrm{c-Ind}_H^GV$ is a locally $L$-analytic $G$-representation which as a topological $K$-vector space can be written as $\underset{g\in G/H}{\oplus} gV$, with the function $f$ corresponding to the (finite, well-defined) sum $\sum_{g\in G/H} gf(g)$.
	
	
	It is clear from the construction that this yields an additive functor
	\begin{equation*}
		\mathrm{c-Ind}_H^G: \mathrm{Rep}^{\mathrm{la}}(H)\to \mathrm{Rep}^{\mathrm{la}}(G).
	\end{equation*}
	
	\begin{lemma}\label{Mackeydecomp}
		\leavevmode
		\begin{enumerate}
			\item[(i)] $\mathrm{c-Ind}_H^G$ is a left adjoint to $\mathrm{Res}^G_H$.
			\item[(ii)] (Mackey decomposition). There is a natural isomorphism
			\begin{equation*}
				\mathrm{Res}^G_H \mathrm{c-Ind}_H^G V\cong \underset{g\in H\backslash G/H}{\oplus} \mathrm{c-Ind}_{H\cap gHg^{-1}}^H \mathrm{Res}_{H\cap gHg^{-1}}^{gHg^{-1}} gV.
			\end{equation*}
		\end{enumerate}
	\end{lemma}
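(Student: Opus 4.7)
For part (i), my plan is to mimic the proof of the earlier Proposition from Section 2 concerning $c\text{-}\Ind^H_C$ (which handled the compact case), adapting it to the present situation where $H$ is only compact modulo $Z_G$. Given $\psi \in \Hom_H(V, \mathrm{Res}^G_H W)$, I would define $\Phi(\psi): c\text{-}\Ind^G_H V \to W$ by $\Phi(\psi)(f) := \sum_{gH \in G/H} g \cdot \psi(f(g))$; this sum is finite by Definition \ref{compactmodH}, since $f$ is supported in only finitely many cosets $g_iH$. Conversely, for $\phi \in \Hom_G(c\text{-}\Ind^G_HV, W)$ and $v \in V$, I let $f_v \in c\text{-}\Ind^G_HV$ be the unique element supported in $H$ satisfying $f_v(h) = \rho(h^{-1})v$, and set $\Psi(\phi)(v) := \phi(f_v)$. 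A direct calculation will show that $\Phi$ and $\Psi$ are natural and mutually inverse. Continuity of $\Phi(\psi)$ is automatic from the locally convex direct sum description $c\text{-}\Ind_H^GV \cong \bigoplus_{gH \in G/H} gV$, on which $\Phi(\psi)$ acts summand-wise through $\psi$.

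For part (ii), I would start from the double coset decomposition $G = \coprod_{[g] \in H\backslash G/H} HgH$, which induces a partition $G/H = \coprod_{[g]} (HgH)/H$. For a fixed representative $g$, setting $H^g := H \cap gHg^{-1}$, the map $hH^g \mapsto hgH$ provides an $H$-equivariant bijection $H/H^g \cong (HgH)/H$. Under the vector space identification $c\text{-}\Ind^G_HV \cong \bigoplus_{g'H \in G/H} g'V$, the closed $H$-stable subspace of functions supported in $HgH$ becomes $\bigoplus_{h \in H/H^g} hgV$, and the key step will be to show that this subspace, with its induced $H$-action, is $H$-equivariantly isomorphic to $c\text{-}\Ind_{H^g}^H(\mathrm{Res}_{H^g}^{gHg^{-1}} gV)$. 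Explicitly, to $f$ supported in $HgH$ I associate $\tilde f(h) := f(hg) \in V$; for $h' \in H^g$ one computes $\tilde f(hh') = f(hg \cdot g^{-1}h'g) = \rho(g^{-1}h'g)^{-1}\tilde f(h)$, which is precisely the $H^g$-equivariance condition for $c\text{-}\Ind_{H^g}^H(gV)$, since $h'$ acts on $gV$ via $\rho(g^{-1}h'g)$ by the twisting convention. Bijectivity of $f \mapsto \tilde f$ is immediate and the topological match follows from the direct sum decompositions. Assembling over double cosets will then give the Mackey decomposition.

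The main obstacle, in my view, is essentially bookkeeping: one must carefully track the twisting conventions for $gV$ as a $(gHg^{-1})$-representation, and verify that the locally convex direct sum topology and the locally analytic structure on the action are both preserved under the double coset regrouping. No deep topological input is required beyond the fact that each $HgH \subseteq G$ is open (so gives a clopen subset of $G/H$, hence a topological direct summand) and that the countability of $G/H$ (assumed in order to define the topology on $c\text{-}\Ind^G_HV$) passes to each piece. The only step requiring genuine care is the $H^g$-equivariance computation for $\tilde f$, which is the true content of the Mackey identification.
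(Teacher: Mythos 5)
Your proposal is correct and follows essentially the same route as the paper: part (i) transfers the adjunction argument already given for compact $C$ verbatim, and part (ii) regroups $\bigoplus_{g\in G/H} gV$ along the double coset partition $G/H = \coprod_{[g]}(HgH)/H$ and identifies each block with $\mathrm{c\text{-}Ind}_{H\cap gHg^{-1}}^{H}(gV)$ via the bijection $H/(H\cap gHg^{-1}) \cong (HgH)/H$. The explicit equivariance check $\tilde f(hh') = \rho(g^{-1}h'g)^{-1}\tilde f(h)$, which the paper leaves implicit, is verified correctly by you.
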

	\begin{proof}
		\begin{enumerate}
			\item[(i)] This is Proposition \ref{Mackeyadj} in the case when $H$ is compact. The same argument works for $H$ compact mod centre.
			
			
			\item[(ii)] Note that
			\begin{equation*}
				\underset{g\in G/H}{\oplus} gV\cong \underset{g\in H\backslash G/H}{\oplus} \underset{h\in H/H\cap gHg^{-1}}{\oplus} hgV,
			\end{equation*}
			identifying the $H$-representation $\mathrm{Res}_H^G\mathrm{c-Ind}_H^GV$ with 
			\begin{equation*}
				\underset{g\in H\backslash G/H}{\oplus} \mathrm{c-Ind}_{H\cap gHg^{-1}}^H \mathrm{Res}_{H\cap gHg^{-1}}^{gHg^{-1}} gV. 
			\end{equation*}
		\end{enumerate}
	\end{proof}
	
	We remark that $gHg^{-1}=gH_0g^{-1}Z_G$, so $H\cap gHg^{-1}$ is actually of finite index in $H$. We could thus replace the $\mathrm{c-Ind}_{H\cap gHg^{-1}}^H$ above by $\mathrm{Ind}_{H\cap gHg^{-1}}^H$.
	
	\begin{lemma}
		\label{india}
		If $V$ is an admissible $H$-representation, then $\mathrm{c-Ind}_H^GV$ is an ind-admissible $G$-representation.
	\end{lemma}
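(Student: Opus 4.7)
My approach is to verify the definition of ind-admissibility directly by restricting $\mathrm{c-Ind}_H^G V$ to the compact open subgroup $H_0 \le G$; by Corollary \ref{indep} this choice of compact open subgroup is immaterial, so this will suffice. Concretely, I will first establish an $H_0$-equivariant Mackey-style decomposition
\begin{equation*}
\mathrm{Res}^G_{H_0}\, \mathrm{c-Ind}_H^G V \;\cong\; \bigoplus_{g \in H_0 \backslash G/H} \mathrm{Ind}_{H_0 \cap gHg^{-1}}^{H_0}\, \mathrm{Res}_{H_0 \cap gHg^{-1}}^{gHg^{-1}}(gV),
\end{equation*}
obtained either by repeating the previous lemma's argument verbatim with $H_0$ in place of $H$, or by further restricting each $H$-summand of the previous lemma to $H_0$ (using $H = H_0 Z_G$ and the fact that $Z_G$ acts on each $gV$ by a central character, so the double-coset indexing collapses). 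The underlying decomposition is simply the partition of $\bigoplus_{g \in G/H} gV$ into $H_0$-orbits of cosets.

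Next I will check that each summand is an admissible $H_0$-representation. Since $H_0$ is compact open in $G$ and $gHg^{-1} = gH_0 g^{-1} Z_G$ is open, the intersection $H_0 \cap gHg^{-1}$ is a compact open subgroup of $H_0$ and, being open in the compact group $H_0$, is of finite index. On the other hand it is also a compact open subgroup of $gHg^{-1}$; the $gHg^{-1}$-representation $gV$ is admissible (conjugation by $g$ being an isomorphism of locally analytic groups), so its restriction to $H_0 \cap gHg^{-1}$ remains admissible. Induction from a finite-index open subgroup preserves admissibility by \cite[Lemma 3.8]{ST2}, exactly as used in the proof of Corollary \ref{indep}; this yields admissibility of each summand.

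Finally, since $G/H$ is countable by hypothesis, the index set $H_0 \backslash G/H$ is countable as well. Enumerating it as $\{g_n\}_{n\ge 1}$ and letting $W_n$ denote the direct sum of the first $n$ summands, each $W_n$ is admissible (admissibility is stable under finite direct sums), and the locally convex direct sum topology makes $\bigcup_n W_n$ a strict inductive limit, verifying ind-admissibility. The only real obstacle is the bookkeeping for the $H_0$-equivariant Mackey decomposition; the admissibility of each summand is then an immediate consequence of the standard behaviour of admissible representations under finite-index induction and restriction to compact open subgroups.
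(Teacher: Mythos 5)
Your argument is correct and proves the lemma, but it takes a genuinely different route from the paper's proof. The paper reuses the $H\backslash G/H$ Mackey decomposition it just established, reducing to the claim that each $H$-summand $\mathrm{Ind}_{H\cap gHg^{-1}}^H\mathrm{Res}(gV)$ is an \emph{admissible $H$-representation}; it verifies this on the dual side, observing that $D(H)\h{\otimes}_{D(H\cap gHg^{-1})}(gV)'$ is a Hausdorff quotient of $D(H_0)\otimes_{D(H_0\cap gH_0g^{-1})}(gV)'$, which is coadmissible over $D(H_0)$ because $D(H_0)$ is finite free over $D(H_0\cap gH_0g^{-1})$. You instead pass immediately to the finer $H_0$-orbit decomposition indexed by $H_0\backslash G/H$ and argue that each $H_0$-summand $\mathrm{Ind}_{H_0\cap gHg^{-1}}^{H_0}\mathrm{Res}(gV)$ is admissible by the elementary fact that finite-index induction preserves admissibility. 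Both approaches are valid and amount to the same underlying computation; the paper's stays close to the $H$-level Mackey lemma and phrases things module-theoretically, while yours bypasses the non-compact group $H$ entirely and works purely with $H_0$. Two minor points: the citation of \cite[Lemma 3.8]{ST2} covers the independence of the choice of compact open subgroup, not the finite-index induction claim itself (the paper asserts the latter without reference in the proof of Corollary \ref{indep}; it follows because $D(H_0)$ is finite free over the index-finite open subgroup's distribution algebra, making the dual coadmissible). And your parenthetical second route via ``$Z_G$ acts on each $gV$ by a central character'' is not justified for a general admissible $V$; this is harmless since your first route (direct $H_0$-orbit decomposition of $\bigoplus_{g\in G/H}gV$) is self-contained.
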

	\begin{proof}
		By the Mackey decomposition above, it suffices to show that 
		\begin{equation*}
			\mathrm{c-Ind}_{H\cap gHg^{-1}}^H \mathrm{Res}_{H\cap gHg^{-1}}^{gHg^{-1}} gV=\mathrm{Ind}_{H\cap gHg^{-1}}^H \mathrm{Res}_{H\cap gHg^{-1}}^{gHg^{-1}} gV
		\end{equation*}
		is an admissible $H$-representation for each $g$.
		
		Clearly, $gV$ is an admissible $gHg^{-1}$-representation. In particular, $(gV)'$ is a coadmissible module over the distribution algebra $D(H_0\cap gH_0g^{-1})$, as $H_0\cap gH_0g^{-1}$ is a compact open subgroup. Now
		\begin{equation*}
			(\mathrm{Ind}_{H\cap gHg^{-1}}^H \mathrm{Res}_{H\cap gHg^{-1}}^{gHg^{-1}}gV)'\cong D(H)\h{\otimes}_{D(H\cap gHg^{-1})}(gV)'
		\end{equation*}
		is a Hausdorff quotient of
		\begin{equation*}
			D(H_0)\h{\otimes}_{D(H_0\cap gH_0g^{-1})}(gV)',
		\end{equation*}
		since $H_0/H_0\cap gH_0g^{-1}$ surjects onto $H/H\cap gHg^{-1}$ by the remark above. Since $D(H_0)$ is a finite free $D(H_0\cap gH_0g^{-1})$-module,
		\begin{equation*}
			D(H_0)\h{\otimes}_{D(H_0\cap gH_0g^{-1})} gV'=D(H_0)\otimes_{D(H_0\cap gH_0g^{-1})} gV'
		\end{equation*}  
		is a coadmissible $D(H_0\cap gH_0g^{-1})$-module and hence also coadmissible as a $D(H_0)$-module, which proves the result.
	\end{proof}
	
	We remark that the dual $(\mathrm{c-Ind}_{H}^GV)'_b$ is then naturally isomorphic to the pro-coadmissible $D_b(H)$-module 
	\begin{align*}
		\mathrm{Hom}_{D_b(H)}(D_b(G), V')&\cong \prod_{g\in G/H} gV'\\
		&\cong \prod_{g\in H\setminus G/H} D(H)\otimes_{D(H\cap gHg^{-1})} gV'
	\end{align*}
	due to the decomposition above.

	\subsection{Hecke operators in the case $G=\mathrm{GL}_2(L)$}
	
	In this appendix, we describe some non-trivial Hecke operators on the compactly induced $G=\mathrm{GL}_2(L)$-representation
	\begin{equation*}
		\mathrm{c-Ind}_{G_0Z_G}^G ((D(G_0)\otimes_{D(\mathfrak{g}, T_0)}M^\mu)')
	\end{equation*}
	from the main body of the text. It follows in particular that this representation is not topologically irreducible, but that one should rather study the constituents obtained by fixing a Hecke character.
	
	While we remark the obvious similarity to \cite{BarLiv} in the modular case, determining the precise shape of the Hecke algebra in our case seems more subtle -- while we show that the Hecke algebra contains the polynomial ring $K[T]$, we cannot rule out the existence of further Hecke operators.
	
	We quickly recall the notations and definitions from the main body of the text. 
	
	Let $L/\mathbb{Q}_p$ be a finite field extension and $K/L$ a spherically complete field. Let $G=\mathbf{GL}_2(L)$, and let $T=\mathbf{T}(L)$ denote the maximal torus of diagonal elements. Let $G_0=\mathbf{GL}_2(O_L)$, $T_0=\mathbf{T}(O_L)$. Lie algebras over $K$ will be denoted by $\mathfrak{g}$ resp. $\mathfrak{t}$.

	Fix a locally analytic character $\mu: T_0\to K^\times$, which differentiates to a character $\mathrm{d}\mu: \mathfrak{t}\to K$.
	
	Upon picking a basis
	\begin{equation*}
		h_0=\begin{pmatrix}
			1&0\\0&0
		\end{pmatrix}, \ h_1=\begin{pmatrix}
			0&0\\0&1
		\end{pmatrix}
	\end{equation*} 
	for $\mathfrak{t}$, we can record $\mathrm{d}\mu$ as a pair $\mathrm{d}\mu=(\mu_0, \mu_1)$ with $\mu_i=\mathrm{d}\mu(h_i)$. (This is the only change of notation compared to the main body of the text, where $\mathrm{d}\mu$ is called $\mu$ and our $\mu$ is called $\chi_\mu$.)
	
	If $x_0, x_1$ are polynomial variables, we write $x^\mu$ for the formal element
	\begin{equation*}
		x^\mu=x_0^{\mu_0}x_1^{\mu_1}
	\end{equation*}
	and consider the following $U(\mathfrak{g})$-module:
	
	\begin{equation*}
		M^\mu:=\{f\cdot x^\mu\in K[x_0^{\pm1}, x_1^{\pm 1}]\cdot x^\mu| \ f \text{ homogeneous of degree $0$}\},
	\end{equation*}
	where $e$ acts as $x_0\partial_1$, $f$ acts as $x_1\partial_0$, $h_0$ acts as $x_0\partial_0$ and $h_1$ acts as $x_1\partial_1$.
	
	We can further define a diagonal $T_0$-action on $M^\mu$ given by 
	\begin{equation*}
		\begin{pmatrix}
			a&0\\0&d
		\end{pmatrix}\cdot x_0^ix_1^{-i}\cdot x^{\mu}=\mu(\begin{pmatrix}
			a&0\\0&d
		\end{pmatrix})a^id^{-i} x_0^ix_1^{-i}x^\mu.
	\end{equation*}
	
	In other words, if we let $\rho: \O_L^\times \to K^\times$ denote the natural embedding character and $\rho_j: T_0\to K^\times$ the character obtained by composing the projection to the $(j, j)$-entry with $\rho$, then $T_0$ acts on $Kx_0^ix_1^{-i}x^\mu$ via the locally analytic character $\rho_0^i\rho_1^{-i}\mu$.
	
	As a $T_0$-representation, $M^\mu$ is simply a direct sum of locally analytic characters, and in particular we can regard it as an abstract $D(T_0)$-module.
	
	Since $\rho_0^i\rho_1^{-i}\mu$ differentiates to the weight $(\mu_0+i, \mu_1-i)$, the $T_0$-action differentiates to the action of $\mathfrak{t}\subseteq \mathfrak{g}$. Furthermore, it is straightforward to verify that the $\mathfrak{g}$-action is $T_0$-equivariant, endowing $M^\mu$ with a $D(\mathfrak{g}, T_0)$-module structure.
	
	Let $\mathfrak{z}=Z(U(\mathfrak{g}))$ denote the centre of the universal enveloping algebra of $\mathfrak{g}$ and let $\chi: \mathfrak{z}\to K$ be the central character corresponding to the weight $h_0-h_1\mapsto \mu_0+\mu_1$, $h_0+h_1\mapsto \mu_0+\mu_1$.
	Explicitly, $\chi$ is obtained via the Harish-Chandra isomorphism as
	\begin{equation*}
		\mathfrak{z}\cong U(\mathfrak{t})^W\to U(\mathfrak{t})\to K,
	\end{equation*}
	sending the Casimir element $(h_0-h_1)^2+2(h_0-h_1)+4fe$ to $(\mu_0+\mu_1)\cdot (\mu_0+\mu_1+2)$ and $h_0+h_1$ to $\mu_0+\mu_1$.
	
	Since
	\begin{equation*}
		(h_0+h_1)\cdot x^\mu=(\mu_0+\mu_1)x^\mu
	\end{equation*}
	and
	\begin{align*}
		((h_0-h_1)^2+(h_0-h_1)+4fe)x^{\mu}&=((\mu_0-\mu_1)^2+2(\mu_0-\mu_1)+4(\mu_0+1)\mu_1)x^{\mu}\\
		&=(\mu_0+\mu_1)\cdot (\mu_0+\mu_1+2)\cdot x^{\mu},
	\end{align*}
	we know that $\mathfrak{z}$ acts on $M^{\mu}$ via $\chi$.
	
	We now give an explicit presentation for $M^\mu$ as a $D(\mathfrak{g}, T_0)$-module.
	
	\begin{lemma}
		\label{presentation}
		Suppose that $\mu_0, \mu_1\notin \mathbb{Z}$.
		
		There is a natural isomorphism
		\begin{equation*}
			M^{\mu}\cong D(\mathfrak{g}, T_0)/I,
		\end{equation*}
		where $I\subseteq D(\mathfrak{g}, T_0)$ is the left ideal generated by $t-\mu(t)$ for $t\in T_0$ and 
		\begin{equation*}
			z-\chi(z), \ z\in \mathfrak{z}.
		\end{equation*} 
	\end{lemma}
	\begin{proof}
		Let $J\subseteq U(\mathfrak{g})$ be the left ideal generated by $h_0-\mu_0$, $h_1-\mu_1$ and by $z-\chi(z)$ for $z\in \mathfrak{z}$.
		
		It follows from the above that the map $U(\mathfrak{g})\to M^{\mu}$ sending $1$ to $x^{\mu}$ factors through $U(\mathfrak{g})/J$ and is surjective (surjectivity follows from $\mu_0, \mu_1\notin \mathbb{Z}$).
		
		This induces a good filtration on $M^{\mu}$, with $x_0^jx_1^{-j}x^{\mu}$ of degree $|j|$. The associated graded map
		\begin{equation*}
			\mathrm{gr}(U(\mathfrak{g})/J)\cong \frac{K[e, f, h_0, h_1]}{(((h_0-h_1)^2+4ef), h_0, h_1)}\cong \frac{K[e, f]}{ef}\to \mathrm{gr}M^{\mu}
		\end{equation*}
		is easily seen to be a bijection, and hence
		\begin{equation*}
			U(\mathfrak{g})/J\cong M^{\mu}
		\end{equation*}
		as $U(\mathfrak{g})$-modules via the map above.
		
		Since $x^{\mu}$ is a $T_0$-eigenvector with weight $\mu$, the result follows.
	\end{proof}  
	
	From now on, we always assume that $\mu_0, \mu_1\notin \mathbb{Z}$. We now consider $M_0=D(G_0)\otimes_{D(\mathfrak{g}, T_0)}M^\mu$ -- this is a coadmissible (finitely presented) $D(G_0)$-module, with a presentation analogous to the one above.

	Choose a locally analytic character $Z_G\to K^\times$ extending $\mu$ on $Z_G\cap T_0$ to turn $V=(M_0)'_b$ into an admissible, locally analytic $G_0Z_G$-representation.
	
	We set $W=\mathrm{c-Ind}_{G_0Z_G}^GV$ and consider the Hecke algebra $\mathcal{H}=\mathrm{End}_G(W)^{\mathrm{op}}$. For the rest of the appendix, all Hom spaces over $K$ will be considered as internal homs in the category of complete bornological spaces. As we will only be concerned with nuclear Fr\'echet $D_b(G)$-modules, this is the same as considering continuous morphisms by Lemma \ref{embeddingfornF} and \cite[Lemma 4.3]{SixOp}, but it has the advantage of also endowing our Hom spaces with natural bornological structures.
	
	To understand $\mathcal{H}$, note that $W$ is ind-admissible (Proposition \ref{ind-stradm}), so if we write $M=W'_b$, then
	\begin{equation*}
		\mathcal{H}\cong \mathrm{End}_{D_b(G)}(M)\cong \mathrm{Hom}_{D(G_0Z_G)}(M, M_0),
	\end{equation*}
	and the Mackey decomposition from Lemma \ref{Mackeydecomp} yields
	\begin{equation*}
		\mathcal{H}\cong \mathrm{Hom}_{D(G_0Z_G)} (\prod_{s\in G_0\setminus G/G_0Z_G} D(G_0)\otimes_{D(G_0\cap sG_0s^{-1})}sM_0, M_0).
	\end{equation*}
	As the action of the centre is fixed throughout, we can replace $D(G_0Z_G)$ with $D(G_0)$.
	
	We remark that $\mathrm{End}_G(W)$ can be identified with the algebra of compactly supported, $G_0Z_G$-biequivariant functions
	\begin{equation*}
		\mathrm{Hom}_c^{G_0-G_0Z_G}(G, \mathrm{Hom}_K(V, V))
	\end{equation*}
	as follows: if $\theta: W\to W$ is a (continuous, i.e. bounded) endomorphism, note that for each $v\in V\subseteq W$, $\theta(v)$ can be regarded as a compactly supported function $G\to V$. Then $\varphi_\theta(g)$ is defined by $v\mapsto \theta(v)(g)$. Conversely, if $\varphi: G\to \mathrm{Hom}_K(V, V)$, then $v\mapsto (g\mapsto \varphi(g)(v))$ determines a (continuous, i.e. bounded) $G_0Z_G$-linear map $V\to W=\mathrm{c-Ind}_{G_0Z_G}^GV$, and hence a $G$-linear map $W\to W$ by adjunction. The multiplication is then given by convolution, i.e.
	\begin{equation*}
		(\varphi_1*\varphi_2)(g)=\sum_{h\in G/G_0Z_G} \varphi_1(h)\circ \varphi_2(h^{-1}g).
	\end{equation*}
	
	Since $V$ is reflexive with $V'=M_0$, it follows that $\mathrm{End}_K(V)=\mathrm{End}_K(M_0)^{\mathrm{op}}$. For $f\in \mathrm{End}_K(V)$, we let $f^*\in \mathrm{End}_K(M_0)$ denote the dual map. For
	\begin{equation*}
		\varphi\in \mathrm{Hom}_c^{G_0-G_0Z_G}(G, \mathrm{End}_K(V)),
	\end{equation*}
	write $\varphi^*: G\to \mathrm{End}_K(M_0)$ for the map $\varphi^*(g)=\varphi(g^{-1})^*$. This exhibits the convolution algebra $\mathrm{Hom}_c^{G_0-G_0Z_G}(G, \mathrm{End}_K(M_0))$ as the opposite algebra to $\mathrm{Hom}_c^{G_0-G_0Z_G}(G, \mathrm{End}_K(V))$, so that $\mathcal{H}=\mathrm{End}_G(W)^{\mathrm{op}}=\mathrm{End}_{D_b(G)}(M)$ can be identified accordingly with 
	\begin{equation*}
		\mathrm{Hom}_c^{G_0-G_0Z_G}(G, \mathrm{Hom}_K(M_0, M_0))
	\end{equation*}
	under convolution. The involution $g\mapsto g^{-1}$ which is involved in this construction is the same as in \cite{ST2} to turn $M=W'_b$ into a left $D(G)$-module.
	
	A set of double coset representatives for the Mackey decomposition is given by 
	\begin{equation*}
		t_i=\begin{pmatrix}
			1&0\\
			0& \pi^i
		\end{pmatrix}, \ i\geq 0.
	\end{equation*} 
	
	\begin{prop}
		Assume that $\mu_0, \mu_1\notin \mathbb{Z}$.
		
		For each $i\geq 0$, there exists a non-zero Hecke operator $T_i\in \mathcal{H}$ which is attached to $t_i$, in the sense that the support of the associated function
		\begin{equation*}
			G\to \mathrm{End}_K(M_0)
		\end{equation*}
		is contained in the double coset $G_0t_iG_0Z_G$. 
	\end{prop}
	\begin{proof}
		Consider the subalgebra $\mathfrak{z}\cdot D(T0)\subset D(G_0)$, and let $K_{\chi, \mu}$ denote the $1$-dimensional $\mathfrak{z}\cdot D(T_0)$-module given by the $T_0$-character $\mu$ and the central character $\chi$. Note that Lemma \ref{presentation} yields isomorphisms
		\begin{equation*}
			M^\mu\cong D(\mathfrak{g}, T_0)\otimes_{\mathfrak{z} D(T_0)}K_{\chi, \mu}
		\end{equation*}
		and
		\begin{equation*}
			M_0\cong D(G_0)\otimes_{\mathfrak{z}D(T_0)} K_{\chi, \mu}.
		\end{equation*}
		
		Accordingly, we identify $K_{\chi, \mu}$ with the one-dimensional $\mathfrak{z}D(T_0)$-submodule $K\cdot x^\mu\subseteq M_0\subseteq M$. 
		
		Fix $i\geq 0$ and consider the one-dimensional subspace $t_iK x^\mu=t_i K_{\chi, \mu}\subseteq M$. As $t_i$ commutes with the centre $\mathfrak{z}$ of the universal enveloping algebra and with elements of $T_0$, $t_iK_{\chi, \mu}$ is again a $\mathfrak{z}D(T_0)$-submodule of $M$ and there is a tautological isomorphism
		\begin{equation*}
			t_iK_{\chi, \mu}\cong K_{\chi, \mu},
		\end{equation*}
		sending $t_i x^\mu$ to $x^\mu$. This yields a non-trivial map
		\begin{equation*}
			D(G_0\otimes t_iG_0t_i^{-1})\otimes_{t_i\mathfrak{z}D(T_0)t_i^{-1}} t_iK_{\chi, \mu}=D(G_0\cap t_ig_0t_i^{-1})\otimes_{\mathfrak{z}D(T_0)}t_iK_{\chi, \mu}\to D(G_0)\otimes_{\mathfrak{z}D(T_0)} K_{\chi, \mu}.
		\end{equation*}
		
		Now as a $D(G_0\cap t_i^{-1}G_0t_i)$-module, we can write
		\begin{equation*}
			M_0\cong M'_i\oplus (\underset{s\in G_0/G_0\cap t_i^{-1}G_0t_i, \ s\neq e}{\bigoplus} sM'_i)
		\end{equation*}
		for 
		\begin{equation*}
			M'_i=D(G_0\cap t_i^{-1}G_0t_i)\otimes_{\mathfrak{z}D(T_0)}K_{\chi, \mu},
		\end{equation*}
		and as a $D(G_0\cap t_iG_0t_i^{-1})$-module, we can write
		\begin{equation*}
			M_0\cong M_i\oplus (\underset{s\in G_0/G_0\cap t_iG_0t_i^{-1}, \ s\neq e}{\bigoplus} sM_i)
		\end{equation*}
		for
		\begin{equation*}
			M_i=D(G_0\cap t_iG_0t_i^{-1})\otimes_{\mathfrak{z}D(T_0)} K_{\chi, \mu}.
		\end{equation*}
		
		In particular, the natural submodule
		\begin{equation*}
			t_iM'_i\cong D(G_0\cap t_iG_0t_i^{-1})\otimes t_iK_{\chi, \mu}\to D(t_iG_0t_i^{-1})\otimes t_iK_{\chi, \mu}=t_iM_0
		\end{equation*}
		is a direct summand as a $D(G_0\cap t_iG_0t_i^{-1})$-module, so that we obtain a non-zero $D(G_0\cap t_iG_0t_i^{-1})$-linear map
		\begin{equation*}
			\alpha_i: t_iM_0\to D(G_0\cap t_iG_0t_i^{-1})\otimes t_iK_{\chi, \mu}\to M_0
		\end{equation*}
		by composing the above with a corresponding projection map.
		
		By adjunction, we obtain a non-zero $D(G_0)$-linear map
		\begin{equation*}
			T'_i: D(G_0)\otimes_{D(G_0\cap t_iG_0t_i^{-1})} t_iM_0\to M_0,
		\end{equation*}
		which we can regard as an element of $\mathcal{H}$ using the Mackey decomposition and adjunction: writing $M\cong \prod_{g\in G/G_0Z_G} gM_0$, we have $T_i(m)=(gT'_i(g^{-1}m))_g$.
		
		Explicitly, we can also view $T_i$ as a $G_0Z_G$-biequivariant function
		\begin{equation*}
			G\to \mathrm{Hom}_K(M_0)
		\end{equation*}
		as follows: the function is supported on the double coset $G_0t_iG_0Z_G$, sending $t_i$ to the endomorphism
		\begin{align*}
			\varphi_i: &M_0\to M_0\\
			&m\mapsto \alpha_i(t_im).
		\end{align*}
		
		Note that this does indeed give a well-defined function on $G_0t_iG_0Z_G$, since
		\begin{equation*}
			\varphi_i (gm)=\alpha_i(t_igm)=t_igt_i^{-1}\alpha_i(t_im)=(t_igt_i^{-1})\varphi_i(m)
		\end{equation*}
		for $g\in t_i^{-1}G_0t_i\cap G_0$, thanks to the $D(G_0\cap t_iG_0t_i^{-1})$-linearity of $\alpha_i$.
		
		In particular, it is immediate that the support of $T_i$ is contained in the double coset of $t_i$.
	\end{proof}
	
	As each $T_i$ is supported on a different double coset, it follows that these operators are linearly independent.
	
	We remark that now that $T_i$ is well-defined, there is a straightforward way to describe it: as $M$ is topologically generated by $x^\mu$, the $D_b(G)$-linear map $T_i: M\to M$ is uniquely determined by $T_i(x^\mu)$. Writing $M\cong \prod_{g\in G/G_0Z_G} gM_0$, we have as before
	\begin{equation*}
		T_i(x^\mu)=(g T'_i(g^{-1}x^\mu))_g.
	\end{equation*}
	Note that $T'_i$ projects to $D(G_0)\otimes_{D(G_0\cap t_iG_0t_i^{-1})}t_iM_0$ (so that the only $g\in G/G_0Z_G$ yielding non-zero entries for $T_i(x^\mu)$ satisfy $g^{-1}\in G_0t_iG_0Z_G$), and from there further to
	\begin{equation*}
		D(G_0)\otimes_{D(G_0\cap t_iG_0t_i^{-1})}(D(G_0\cap t_iG_0t_i^{-1})\otimes_{\mathfrak{z}D(T_0)}t_iK_{\chi, \mu})=D(G_0)\otimes_{\mathfrak{z}D(T_0)}t_iK x^\mu,
	\end{equation*}
	so that $T'_i(g^{-1}x^\mu)=0$ for any $g\in G/G_0Z_G$ with $g\notin t_i^{-1}G_0Z_G$. Hence,
	\begin{equation*}
		T_i(x^\mu)=t_i^{-1}T'_i(t_i x^\mu)=t_i^{-1}x^\mu.
	\end{equation*}
	
	As one consequence, we can deduce that $M$, and hence $W$, is not topologically irreducible: Let $\lambda\in K^\times$. The endomorphism $T_1-\lambda\in \mathrm{End}_{D_b(G)}(M)$ is non-zero, and its kernel contains the element
	\begin{equation*}
		(\lambda^n t_1^n x^\mu)_{n\in \mathbb{Z}}\in \prod_{g\in G/G_0Z_G} gM_0=M.
	\end{equation*}
	We therefore obtain a non-trivial closed $D_b(G)$-submodule.
	
	The characterisation above also makes the following observation immediate.
	
	\begin{cor}
		We use the same notation as above. The $T_i$ satisfy $T_i\cdot T_j=T_{i+j}$ for all $i, j\geq 0$. Setting $T=T_1$, this exhibits the polynomial ring $K[T]$ as a subring of $\mathcal{H}$.
	\end{cor}

	It is natural to ask whether, as in \cite{BarLiv}, the Hecke algebra $\mathcal{H}$ is in fact isomorphic to the polynomial ring in those cases when $M_0$ is topologically simple. This requires a proof that the subspace of functions which are supported on any given double coset $G_0t_iG_0Z_G$ is one-dimensional. This seems not clear at all. 
	
	It might seem strange at first to obtain a polynomial ring in the context of complete bornological algebras (and not a ring of convergent power series, say), so we should briefly explain why this is not entirely surprising: Let $D(G_0)=\varprojlim D_n(G_0)$, suppose that $N\cong N_n$ is a coadmissible $D(G_0)$-module and $M=\prod_i M^{(i)}$ is a countable product of coadmissible $D(G_0)$-modules, $M^{(i)}\cong \varprojlim M^{(i)}_n$. In our setting, we have $N=M_0$, and $M=\prod D(G_0)\otimes_{D(G_0\cap t_iG_0t_i^{-1})} t_i M_0$. We then have
	\begin{align*}
		\mathrm{Hom}_{D(G_0)}(M, N)&\cong \varprojlim \mathrm{Hom}_{D(G_0)}(\prod M^{(i)}, N_n)\\
		&\cong \varprojlim_n \mathrm{Hom}_{D_n(G_0)}(\prod M^{(i)}_n, N_n)\\
		&\cong \varprojlim_n \oplus_i \mathrm{Hom}_{D_n(G_0)}(M^{(i)}_n, N_n)
	\end{align*}
	by Lemma \ref{changeton} and the fact that any continuous morphism from $\prod M^{(i)}_n\cong \varprojlim_m \prod_{i=1}^m M^{(i)}_n$ to some Banach space needs to factor through one of the $\prod_{i=1}^m M^{(i)}_n$. In particular, if this inverse system is constant, we obtain a complete bornological $K$-vector space which is endowed with a direct sum bornology. E.g. if $\mathrm{Hom}_{D_n(G_0)}(M^{(i)}_n, N_n)$ is one-dimensional for all $i$ and $n$ (with isomorphisms as connecting morphisms), we obtain the vector space $K[T]$ endowed with the bornology
	\begin{equation*}
		K[T]\cong \oplus_i KT^i,
	\end{equation*}
	meaning a subset is bounded if and only if it is contained in a finite-dimensional $K$-vector subspace and is bounded with respect to the standard topology for finite-dimensional vector spaces.
	
	It would be very interesting to determine the precise shape of $\mathcal{H}$ and describe the corresponding quotients $W\otimes_{\mathcal{H}, \rho}K$ for a fixed Hecke character $\rho: \mathcal{H}\to K$. Unlike $W$ itself, these representations would also have a much higher chance of being admissible.

		\vskip18pt

		\bibliographystyle{plain}
		\bibliography{Cuspidal}
		
	\end{document}